\providecommand{\keywords}[1]{\textbf{\textit{Key words and phrases }} #1}
\providecommand{\subjclass}[1]{\textbf{\textit{2010 Mathematics Subject Classification.}} #1}
\theoremstyle{definition}
\newtheorem{theo}{Theorem}[subsection]
\newtheorem{theore}{Theorem}[section]
\newtheorem{pr}[theo]{Proposition}
 \newtheorem{lem}[theo]{Lemma}
 \newtheorem{coro}[theo]{Corollary}
\theoremstyle{remark}
\newtheorem{rema}[theo]{Remark}
\newtheorem{rrema}[theore]{Remark}
\theoremstyle{definition}
\newtheorem{defi}[theo]{Definition}
\newtheorem{prop}[theore]{Proposition}
\newtheorem{defn}[theore]{Definition}
\numberwithin{equation}{subsection}
\newcommand\cu{\underline{C}}
\newcommand\du{\underline{D}}
\newcommand\au{\underline{A}}
\newcommand\aucp{\underline{A}_{\cp}}
\newcommand\bu{\underline{B}}
\newcommand\hu{\underline{H}}
\newcommand\obj{\operatorname{Obj}}
\newcommand\mo{\operatorname{Mor}}
\newcommand\id{\operatorname{id}}
\DeclareMathOperator\adfu{\operatorname{AddFun}}
\DeclareMathOperator\kar{\operatorname{Kar}}
 \DeclareMathOperator\cok{\operatorname{Coker}}
\DeclareMathOperator\imm{\operatorname{Im}}
\DeclareMathOperator\co{\operatorname{Cone}}
\DeclareMathOperator\prli{\varprojlim}
\DeclareMathOperator\inli{\varinjlim}
\DeclareMathOperator\hcl{\underrightarrow{\operatorname{hocolim}}} 
\newcommand\hw{{\underline{Hw}}}
\newcommand\hrt{{\underline{Ht}}}
\newcommand\hf{{\underline{HF}}}
\newcommand\chow{\operatorname{Chow}}
\newcommand\wchow{w_{Chow}{}}
\newcommand\wstu{w^{st}}
\newcommand\dm{DM}
\newcommand\dmr{\dm_{R}}
\newcommand\dmk{\dm_{k}}
\newcommand\dmgr{\dm^{gm}_{R}}
\newcommand\dmgk{\dm^{gm}_{k}}
\newcommand\dmgq{\dm^{gm}_{\q}}
\newcommand\chowr{\chow_R}
\newcommand\chowk{\chow_k}
\newcommand\da{DA^{et}(k,k)}
\newcommand\omdr{\mathbf{\Omega}}
\newcommand\omp{\mathbf{\Omega}'}
\DeclareMathOperator\cha{\operatorname{char}}
\newcommand\q{{\mathbb{Q}}}
\newcommand\com{\mathbb{C}}
\newcommand\ql{{\mathbb{Q}_{\ell}}}
\newcommand\zop{{\mathbb{Z}[\frac{1}{p}]}}
\newcommand\z{{\mathbb{Z}}}
 \newcommand\lan{\langle}
\newcommand\ra{\rangle}
\newcommand\ob{^{-1}}
\newcommand\lam{\Lambda}
\newcommand\al{\alpha}
\newcommand\gam{\Gamma}
\newcommand\ns{\{0\}}
\newcommand\ab{\operatorname{Ab}}
\newcommand\abfr{\operatorname{FreeAb}}
\newcommand\cp{\mathcal{P}}
\newcommand\perpp{{}^{\perp}}
\newcommand\opp{^{op}}
\newcommand\shg{SH(G)}
\newcommand\wg{w^G}
\newcommand\wgfin{w^G_{fin}}
\newcommand\macg{\mathcal{M}_G}
\newcommand\emg{\operatorname{EM}_G}
\newcommand\emo{\operatorname{EM}}
\newcommand\egam{\operatorname{EM}^{\gam}}
\newcommand\emz{\operatorname{EM}^{\z}}
\newcommand\wsp{w^{sph}}
\newcommand\hwsp{\hw^{sph}}
\newcommand\shtop{SH}
\newcommand\hsing{H^{sing}}
\newcommand\fil{{\operatorname{Fil}}}
\newcommand\pwcu{\operatorname{Post}_w(\cu)}
\newcommand\cuw{\cu_w}
\newcommand\kw{K_{\mathfrak{w}}}
\newcommand\ca{{\mathcal{A}}}
\newcommand\cacp{{\mathcal{A}_{\cp}}}
\newcommand\psvr{\operatorname{PShv}^R}
\newcommand\tg{\tilde g_0}
\begin{document}

\title
 {On weight complexes,  pure functors, and detecting  weights}
\author{Mikhail V. Bondarko
   \thanks{ 
 The main results of the paper were  obtained under support of the Russian Science Foundation grant no. 16-11-00073.}}\maketitle
\begin{abstract} This paper is dedicated to the study of weight complex functors (defined on triangulated categories endowed with weight structures) and their applications. We introduce {\it pure} (co)homological functors that "ignore all non-zero weights"; these have a nice description in terms of weight complexes. 

 An important example is  the weight structure $\wg$ generated by the orbit category in the $G$-equivariant stable homotopy category $\shg$;  
 the corresponding pure cohomological functors into abelian groups are the Bredon cohomology  associated to Mackey functors ones. 
 Pure functors related to "motivic" weight structures are also quite useful. 

Our results also give some (more) new weight structures and the conservativity of certain weight-exact functors. 
	 Moreover, we prove that certain functors "detect weights", i.e., check whether an object belongs to the given level of the weight filtration.
\end{abstract}
\subjclass{Primary 18E30  18E40 14C15, 18G25 55P42 55P91; Secondary 55N91.}

\keywords{Triangulated category, weight structure, weight complex, weight-exact functor, conservativity, motives, pure functors, equivariant stable homotopy category, Mackey functors, Bredon cohomology.}

\tableofcontents

 \section*{Introduction}

In this paper we treat several questions related to weight complex functors; the latter are defined on triangulated categories endowed with weight structures (as independently defined by the author and D. Pauksztello).

We give an important definition of {\it pure}   (co)homological functors.\footnote{The relation of pure functors to Deligne's purity of (singular and \'etale) cohomology is recalled in Remark \ref{rpuresd}(3).} 
 Functors of this type have already found interesting applications in several papers (note in particular that the results of  our \S\ref{sdet} are important for the study of Picard groups of triangulated categories in \cite{bontabu}; other interesting pure functors were crucial for \cite{kellyweighomol}, \cite{bachinv},  \cite{bgn}, and \cite{bsoscwhn}). Pure functors can be defined in two distinct ways: for a weight structure $w$ on a triangulated category $\cu$ one can either demand that a (co)homological functor $H$ from $\cu$ into an abelian category $\au$ kills objects whose "weights" are either (strictly) positive or negative, or "reconstruct" $H$ of this type from its values on the {\it heart} $\hw$ of $w$ using the corresponding  weight complex functor (and one obtains a pure functor from any additive functor from $\hw$ into $\au$ using this method).

Now we recall that the original weight complex functor from \cite{gs} has associated certain complexes of Chow motives to varieties over characteristic $0$ fields. We vastly generalize this construction (following \cite{bws}) 
 and obtain a "weakly exact" functor $t:\cu\to \kw(\hw)$ (this is a certain "weak" category of complexes; see Proposition \ref{pwt} and Remark \ref{rwc}(\ref{irwc7}) below) 
 corresponding  to any weight structure $w$.  
These general weight complex functors  are closely related to {\it weight spectral sequences} (that generalize Deligne's ones and 
 also "calculate" the values of pure functors; see Proposition \ref{pwss}). Moreover, the conservativity properties of these functors enable us to prove that a {\it weight-exact functor} (i.e., an exact functor that "respects"  the corresponding weight structures) whose restriction to the heart is full and conservative is also conservative on weight-bounded objects.  Combined with the recent results of J. Ayoub (see Remark \ref{rgap}), this statement 
 implies the conservativity of the $\ql$-\'etale (and de Rham) realization on the category $\dmgq$ of geometric Voevodsky motives over a characteristic $0$ field (see Remark \ref{rayoub}(3)).  We also extend this result to a bigger motivic category.

 Furthermore, we apply our general theory to the case of the {\it spherical} weight structure $\wg$ on the equivariant stable homotopy category $\shg$; here $G$ is a compact Lie group. $\wg$ is generated by the (stable) {\it orbit category}; the latter  consists of spectra of the form form $S_H^0$ (see \S\ref{shg}),  where $H$ runs through closed subgroups of $G$.  
 The corresponding class $\shg_{\wg\ge 0}$ is the class of connective $G$-spectra, the heart $\hw^G$ consists of retracts of coproducts of $S_{H_i}^0$,  whereas the weight complex functor calculates the 
equivariant   ordinary 
 homology with Burnside ring coefficients   $H^G_*$   considered in \cite{lewishur} and \cite{mayeq}, 
  and pure cohomological  functors into abelian groups are representable by  Eilenberg-MacLane $G$-spectra and equal the Bredon cohomology functors corresponding to Mackey functors. Moreover, in the case $G=\{e\}$ (respectively, $\shg=\shtop$) the corresponding {\it  $\wsp$-Postnikov towers} are the  cellular ones in the sense of \cite[\S6.3]{marg}. 

\begin{rrema}\label{rgap}
Unfortunately, the current proof of the main results of Ayoub's \cite{ayoubcon}  contains a gap. Hopefully, it will be closed eventually.
Anyway, the fact that Theorem II of ibid. implies the conservativity of realizations conjecture is non-trivial and interesting for itself. 

Note also that the latter conservativity assertion has several nice applications;  some of them were described in \cite{bcons}.
\end{rrema}

Now we describe the contents of the paper;  
 some more information of this sort can also be found at the beginnings of sections.

In \S\ref{sold} we recall a significant part of the general theory of  weight structures. Moreover, we treat weight complexes more accurately than in 
 \cite[\S3]{bws} (see also 
 Appendices \ref{sdwc}--\S\ref{swhe} for some additional remarks on this matter). Furthermore, we apply 
 our theory to obtain a new theorem on the conservativity of weight-exact functors; our results generalize certain statements from \cite{wildcons}.

In \S\ref{spuredet} we introduce and study {\it pure} (homological and cohomological) functors. These functors are quite important  (cf. \S\ref{stop}  for "topological" examples, whereas several motivic examples were shown to be actual in several recent papers). We relate them to detecting weights (i.e., we prove for a  {\it $w$-bounded below} object $M$ that it belongs to 
 the $n$th level of the weight filtration whenever $H_i^{\ca}(M)=0$ for all $i< n$, where $H_*^{\ca}$ is a certain homological functor). This matter is important for the categorical Picard calculations of \cite{bontabu}. 
We also study ({\it smashing}) weight structures and pure functors that  respect  coproducts.

In \S\ref{sexamples} we recall some statements that allow to reconstruct a weight structure 
 from a subcategory of its heart. These theorems give the existence of so-called Chow weight structures on certain categories of Voevodsky motives; next we discuss the aforementioned motivic conservativity statements. Moreover, we study pure functors and detecting weights for {\it purely compactly generated} (smashing) weight structures.  We also study possible "variations" of {\it weight Postnikov towers} and the corresponding weight complexes (for a fixed object $M$ of $\cu$); as a consequence, we obtain a new existence of weight structure statement along with some more motivic conservativity results.

In \S\ref{stop} we relate our general theory to  the stable homotopy category $\shg$ of $G$-equivariant spectra (for any compact Lie group $G$) along with the "spherical" weight structure $\wg$  (generated by the stable orbit subcategory  of equivariant spheres). We prove that the corresponding pure cohomology is  Bredon one. In the case $G=\{e\}$ 
we prove that singular homology detects weights, and that $\wsp$-Postnikov towers are the {\it cellular} ones in the sense of \cite{marg}. We also discuss the relation of our results to  ({\it adjacent}) $t$-structures, and to the {\it connective stable homotopy theory} as described in \S7 of \cite{axstab}. 

The author is deeply grateful to prof. J.P. May for his very useful answers concerning equivariant homotopy categories, and to the referee for really important comments to the text.  He is  also 
 extremely grateful to the Max Planck Institute 
 in Bonn for the support 
 and  hospitality during the work on this version.

\section{On weight structures:   reminder, weight complexes, and conservativity applications}
\label{sold}

In \S\ref{snotata} we introduce some notation and conventions.

In \S\ref{ssws} we recall some basics on weight structures.

\S\ref{sswc} is dedicated to the theory of weight complex functors. Our treatment of this subject (along with weight Postnikov towers) is more accurate than the original one in \cite{bws} (cf. \S\ref{sdwc}--\ref{swhe} below).

In \S\ref{swss} we recall  the basics of the theory of weight spectral sequences.

In \S\ref{sweap} we apply our theory to obtain an interesting statement on the conservativity of weight-exact functors.

\subsection{Some (categorical) notation }\label{snotata} 

\begin{itemize}
\item	All coproducts in this paper will be small.

\item Given a category $C$ and  $X,Y\in\obj C$  we  write $C(X,Y)$ for  the set of morphisms from $X$ to $Y$ in $C$.

\item For categories $C',C$ we write $C'\subset C$ if $C'$ is a full 
subcategory of $C$.

\item We  say that $D$ is an {\it essentially wide} subcategory of $C$ if $D$ is a full subcategory of $C$ that is equivalent to $C$. Moreover, we will say that $D$ is a {\it skeleton} of $C$ if any two isomorphic objects of $D$ are equal.

\item Given a category $C$ and  $X,Y\in\obj C$, we say that $X$ is a {\it
retract} of $Y$  if $\id_X$ can be 
 factored through $Y$.\footnote{If $C$ is triangulated or abelian, 
then $X$ is a retract of $Y$ if and only if $X$ is its direct summand.}\ 

\item A  (not necessarily additive) subcategory $\hu$ of an additive category $C$ 
is said to be {\it retraction-closed} in $C$ if it contains all retracts of its objects in $C$.

\item  For any $(C,\hu)$ as above the full subcategory $\kar_{C}(\hu)$ of 
 $C$ whose objects are all retracts of  (finite) direct sums of objects $\hu$ in $C$ will be called the {\it retraction-closure} of $\hu$ in $C$; note that this subcategory is obviously additive and retraction-closed in $C$.

\item The {\it Karoubi envelope} $\kar(\bu)$ (no lower index) of an additive category $\bu$ is the category of ``formal images'' of idempotents in $\bu$. Consequently, its objects are the pairs $(A,p)$ for $A\in \obj \bu,\ p\in \bu(A,A),\ p^2=p$, and the morphisms are given by the formula 
$$\kar(\bu)((X,p),(X',p'))=\{f\in \bu(X,X'):\ p'\circ f=f \circ p=f \}.$$ 
 The correspondence  $A\mapsto (A,\id_A)$ (for $A\in \obj \bu$) fully embeds $\bu$ into $\kar(\bu)$.
 Moreover, $\kar(\bu)$ is {\it Karoubian}, i.e.,  any idempotent morphism yields a direct sum decomposition in 
 $\kar(\bu)$.  Recall also that $\kar(\bu)$ is triangulated if $\bu$ is (see \cite{bashli}).

\item The symbol $\cu$ below will always denote some triangulated category; usually it will
be endowed with a weight structure $w$. The symbols $\cu'$ and $\du$ will  also be used  for triangulated categories only. 

\item For any  $A,B,C \in \obj\cu$ we  say that $C$ is an {\it extension} of $B$ by $A$ if there exists a distinguished triangle $A \to C \to B \to A[1]$.

\item A class $D\subset \obj \cu$ is said to be  {\it extension-closed}  if it	is closed with respect to extensions and contains $0$. We  call the smallest extension-closed subclass of objects of $\cu$ that  contains a given class $B\subset \obj\cu$   the  {\it extension-closure} of $B$. 

\item Given a class $D$ of objects of $\cu$ we will write $\lan D\ra$ for the smallest  full retraction-closed triangulated subcategory of $\cu$ containing $D$. We   call  $\lan D\ra$  the triangulated category {\it densely generated} by $D$.    Certainly, this definition can be applied in the case $\du=\cu$.

Moreover, we will say that  $D$ {\it strongly generates} $\cu$ if  $\cu$ equals its own smallest strictly full triangulated subcategory that contains $D$.\footnote{Clearly, this condition is fulfilled if and only if  $ \cu$ equals the extension-closure of $\cup_{j\in \z}D[j]$.}

\item For $X,Y\in \obj \cu$ we will write $X\perp Y$ if $\cu(X,Y)=\ns$. For
$D,E\subset \obj \cu$ we write $D\perp E$ if $X\perp Y$ for all $X\in D,\
Y\in E$. Given $D\subset\obj \cu$ we   write $D^\perp$ for the class $$\{Y\in \obj \cu:\ X\perp Y\ \forall X\in D\}.$$
Dually, ${}^\perp{}D$ is the class $\{Y\in \obj \cu:\ Y\perp X\ \forall X\in D\}$.

\item Given $f\in\cu (X,Y)$, where $X,Y\in\obj\cu$, we  call the third vertex
of (any) distinguished triangle $X\stackrel{f}{\to}Y\to Z$ a {\it cone} of
$f$.\footnote{Recall that different choices of cones are connected by non-unique isomorphisms.}\

\item Below $\au$ will always  denote some abelian category; $\bu$ is an additive category.

\item We  write $C(\bu)$  for the category of (cohomological) complexes over $\bu$;  $K(\bu)$ is its homotopy category. 
 The full subcategory of $K(\bu)$ consisting of bounded complexes  will be denoted by $K^b(\bu)$. 

We  write $M=(M^i)$ if $M^i$ are the terms of a complex $M$.
	
	\item We will say that an additive covariant (resp. contravariant) functor from $\cu$ into $\au$ is {\it homological} (resp. {\it cohomological}) if it converts distinguished triangles into long exact sequences.
	
	For a (co)homological functor $H$ and $i\in\z$ we  write $H_i$ (resp. $H^i$) for the composition $H\circ [-i]$.

\end{itemize}

\subsection{Weight structures: basics}\label{ssws}

\begin{defi}\label{dwstr}

I. A pair of subclasses $\cu_{w\le 0},\cu_{w\ge 0}\subset\obj \cu$ 
will be said to define a weight
structure $w$ on a triangulated category  $\cu$ if 
they  satisfy the following conditions.

(i) $\cu_{w\ge 0}$ and $\cu_{w\le 0}$ are retraction-closed in $\cu$ (i.e., contain all $\cu$-retracts of their objects).

(ii) {\bf Semi-invariance with respect to translations.}

$\cu_{w\le 0}\subset \cu_{w\le 0}[1]$ and $\cu_{w\ge 0}[1]\subset \cu_{w\ge 0}$.

(iii) {\bf Orthogonality.}

$\cu_{w\le 0}\perp \cu_{w\ge 0}[1]$.

(iv) {\bf Weight decompositions}.

 For any $M\in\obj \cu$ there exists a distinguished triangle $$LM\to M\to RM {\to} LM[1]$$
such that $LM\in \cu_{w\le 0} $ and $ RM\in \cu_{w\ge 0}[1]$.

Moreover, if $\cu$ is endowed with a weight structure then we will say that $\cu$ is a {\it weighted} (triangulated) category.
\end{defi}

We will also need the following definitions.

\begin{defi}\label{dwso}
Let $i,j\in \z$; assume that a triangulated category $\cu$ is endowed with a weight structure $w$.

\begin{enumerate}
\item\label{idh}
The full  subcategory $\hw$ of $ \cu$ whose objects are $\cu_{w=0}=\cu_{w\ge 0}\cap \cu_{w\le 0}$  is called the {\it heart} of  $w$.

\item\label{id=i}
 $\cu_{w\ge i}$ (resp. $\cu_{w\le i}$, resp. $\cu_{w= i}$) will denote the class $\cu_{w\ge 0}[i]$ (resp. $\cu_{w\le 0}[i]$, resp. $\cu_{w= 0}[i]$).

\item\label{id[ij]}
$\cu_{[i,j]}$  denotes $\cu_{w\ge i}\cap \cu_{w\le j}$; hence this class  equals $\ns$ if $i>j$. 

$\cu^b\subset \cu$ will be the category whose object class is $\cup_{i,j\in \z}\cu_{[i,j]}$; we  say that its objects are the {$w$-bounded} objects of $\cu$.

\item\label{idbo}
We   say that $(\cu,w)$ is {\it  bounded}  if $\cu^b=\cu$ (i.e., if $\cup_{i\in \z} \cu_{w\le i}=\obj \cu=\cup_{i\in \z} \cu_{w\ge i}$).

\item\label{idbob} We  call $\cup_{i\in \z} \cu_{w\ge i}$ (resp. $\cup_{i\in \z} \cu_{w\le i}$) the class of {\it $w$-bounded below} (resp., {\it $w$-bounded above}) objects of $\cu$.

\item\label{idwe}  Let   $\cu'$ be a triangulated category endowed with  a weight structure $w'$; let $F:\cu\to \cu'$ be an exact functor.

Then $F$ is said to be  {\it  weight-exact} (with respect to $w,w'$) if it maps $\cu_{w\le 0}$ into $\cu'_{w'\le 0}$ and
sends $\cu_{w\ge 0}$ into $\cu'_{w'\ge 0}$. 

\item\label{idrest}
Let $\du$ be a full triangulated subcategory of $\cu$.

We will say that $w$ {\it restricts} to $\du$ whenever the couple $(\cu_{w\le 0}\cap \obj \du,\ \cu_{w\ge 0}\cap \obj \du)$ is a weight structure on $\du$.

\item\label{ilrd}
We will say that $M$ is left (resp., right) {\it $w$-degenerate} (or {\it weight-degenerate} if the choice of $w$ is clear) if $M$ belongs to $ \cap_{i\in \z}\cu_{w\ge i}$ (resp.    to $\cap_{i\in \z}\cu_{w\le i}$).

\item\label{iwnlrd} We  say that $w$ is left (resp., right) {\it non-degenerate} if all left (resp. right) weight-degenerate objects are zero.
\end{enumerate}
\end{defi}

\begin{rema}\label{rstws}

1. A  simple (and still  useful) example of a weight structure comes from the stupid filtration on the homotopy category $K(\bu)$  of complexes over an arbitrary additive  $\bu$ (it can also be restricted to bounded complexes; see Definition \ref{dwso}(\ref{idrest})). 
 We set $K(\bu)_{\wstu\le 0}$ (resp. $K(\bu)_{\wstu\ge 0}$) to be the class of complexes that are
homotopy equivalent to complexes  concentrated in degrees $\ge 0$ (resp. $\le 0$); see Remark 1.2.3(1) of \cite{bonspkar} for more detail. We will use this notation below. 

 The heart of this weight structure is the retraction-closure  of $\bu$  in  $K(\bu)$; hence it is equivalent to $\kar(\bu)$.

2. A weight decomposition (of any $M\in \obj\cu$) is almost never canonical. 

Still for any $m\in \z$ the axiom (iv) gives the existence of a distinguished triangle \begin{equation}\label{ewd} w_{\le m}M\to M\to w_{\ge m+1}M\to (w_{\le m}M)[1] \end{equation}  with some $ w_{\ge m+1}M\in \cu_{w\ge m+1}$ and $ w_{\le m}M\in \cu_{w\le m}$; we  call it an {\it $m$-weight decomposition} of $M$.

 We will often use this notation below even though $w_{\ge m+1}M$ and $ w_{\le m}M$ are not canonically determined by $M$. We  call any possible choice either of $w_{\ge m+1}M$ or of $ w_{\le m}M$ (for any $m\in \z$) a {\it weight truncation} of $M$. Moreover, when we will write arrows of the type $w_{\le m}M\to M$ or $M\to w_{\ge m+1}M$ we  always assume that they come from some $m$-weight decomposition of $M$.

3. In the current paper we use the ``homological convention'' for weight structures; it was previously used in  \cite{wildshim}, \cite{wildcons}, \cite{hebpo}, 
\cite{brelmot}, \cite{bonivan}, \cite{bonspkar},  
  \cite{bokum},  \cite{bgn},  \cite{bkwn}, \cite{bvtr}, and \cite{bpws}, whereas in \cite{bws}, \cite{bger}, and \cite{bontabu} the ``cohomological convention'' was used. In the latter convention the roles of $\cu_{w\le 0}$ and $\cu_{w\ge 0}$ are essentially interchanged, i.e., one
considers  the classes $\cu^{w\le 0}=\cu_{w\ge 0}$ and $\cu^{w\ge 0}=\cu_{w\le 0}$.  Consequently,  a
complex $X\in \obj K(\bu)$ whose only non-zero term is the fifth one (i.e.,
$X^5\neq 0$) has weight $-5$ in the homological convention, and has weight $5$
in the cohomological convention. Thus the conventions differ by ``signs of
weights'';  $K(\bu)_{[i,j]}$ is the class of retracts of complexes concentrated in degrees  $[-j,-i]$. 
 
 We also recall that 
D. Pauksztello has introduced weight structures independently (see \cite{paucomp}); he called them co-t-structures. \end{rema}

\begin{pr}\label{pbw}
Let  $m\le l\in\z$, $M,M'\in \obj \cu$, $g\in \cu(M,M')$. 

\begin{enumerate}
\item \label{idual}
The axiomatics of weight structures is self-dual, i.e., for $\cu'=\cu^{op}$ (so $\obj\cu'=\obj\cu$) there exists the (opposite)  weight structure $w'$ for which $\cu'_{w'\le 0}=\cu_{w\ge 0}$ and $\cu'_{w'\ge 0}=\cu_{w\le 0}$.

\item\label{iort}
 $\cu_{w\ge 0}=(\cu_{w\le -1})^{\perp}$ and $\cu_{w\le 0}={}^{\perp} \cu_{w\ge 1}$.

\item\label{icoprod} $\cu_{w\le 0}$ is closed with respect to all coproducts that exist in $\cu$.

\item\label{iext}  $\cu_{w\le 0}$, $\cu_{w\ge 0}$, and $\cu_{w=0}$ are additive and extension-closed. 

\item\label{isplit} If $A\to B\to C\to A[1]$ is a $\cu$-distinguished triangle  and $A,B,C\in  \cu_{w=0}$ then this distinguished triangle splits, that is,  $B\cong A\bigoplus C$. 

\item\label{igenlm}
The class $\cu_{[m,l]}$ is the extension-closure of $\cup_{m\le j\le l}\cu_{w=j}$.

\item\label{ibond} If $M$ is bounded above (resp. below)  and also left (resp. right) $w$-degenerate then it is zero. 

\item\label{ifact} Assume  $M'\in   \cu_{w\ge m}$. Then any $g\in \cu(M,M')$ factors through $w_{\ge m}M$ (for any choice of the latter object).

Dually, if $M\in   \cu_{w\le m}$ then any $g\in \cu(M,M')$ factors through $w_{\le m}M'$.

\item\label{iwdmod} If $M$ belongs to $ \cu_{w\le 0}$ (resp. to $\cu_{w\ge 0}$) then it is a retract of any choice of $w_{\le 0}M$ (resp. of $w_{\ge 0}M$).

 \item\label{iwd0} 
 If $M\in \cu_{w\ge m}$  then $w_{\le l}M\in \cu_{[m,l]}$ (for any $l$-weight decomposition of $M$). 

Dually, if  $M\in \cu_{w\le l}$   then $w_{\ge m}M\in \cu_{[m,l]}$.

\item\label{icompl} 				For any (fixed) $m$-weight decomposition of $M$ and an $l$-weight decomposition of $M'$  (see Remark \ref{rstws}(2))   $g$ can be extended to a morphism of the corresponding distinguished triangles:
 \begin{equation}\label{ecompl} \begin{CD} w_{\le m} M@>{c}>>
M@>{}>> w_{\ge m+1}M\\
@VV{h}V@VV{g}V@ VV{j}V \\
w_{\le l} M'@>{}>>
M'@>{}>> w_{\ge l+1}M' \end{CD}
\end{equation}

Moreover, if $m<l$ then this extension is unique (provided that the rows are fixed).

\item\label{iwdext} For any distinguished triangle $M\to M'\to M''\to M[1]$  and any 
weight decompositions $LM\stackrel{a_{M}}{\to} M\stackrel{n_{M}}{\to} R_M\to LM[1]$ and $LM''\stackrel{a_{M''}}{\to} M''\stackrel{n_{M''}}{\to} R_M''\to LM''[1]$ there exists a commutative diagram 
$$\begin{CD}
LM @>{}>>LM'@>f>> LM''@>{}>>LM[1]\\
 @VV{a_M}V@VV{a_{M'}}V @VV{a_{M''}}V@VV{a_{M}[1]}V\\
M@>{}>>M'@>{}>>M''@>{}>>M[1]\\
 @VV{n_M}V@VV{n_{M'}}V @VV{n_{M''}}V@VV{n_{M}[1]}V\\
RM@>{}>>RM'@>{}>>RM''@>{}>>RM[1]\end{CD}
$$
in $\cu$ whose rows are distinguished triangles and the second column is a weight decomposition (along with the first and the third one).
\end{enumerate}
\end{pr}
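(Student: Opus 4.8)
The plan is to build the diagram column by column, using the octahedral axiom to organize the data. First I would fix the given distinguished triangle $M\to M'\to M''\to M[1]$ and the prescribed weight decompositions of $M$ and $M''$. The middle column (the weight decomposition of $M'$) is the object to be produced, so the natural move is to apply the octahedral axiom to the composition of morphisms that end at $M'$. Concretely, one can consider the composite $LM\xrightarrow{a_M} M\to M'$ and build a distinguished triangle on it; I would show, using orthogonality (axiom (iii) of Definition \ref{dwstr}, equivalently part (\ref{iort}) of this very proposition) together with part (\ref{iext}) (extension-closedness of $\cu_{w\le 0}$ and of $\cu_{w\ge 0}[1]$), that the resulting cone sits in the correct weight range, so that the middle row of the diagram becomes a weight decomposition of $M'$.

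In more detail, the key steps in order: (1) form the morphism $LM\to M\to M'$ and complete it to a distinguished triangle $LM\to M'\to N\to LM[1]$ for some $N$; (2) apply the octahedral axiom to the pair of composable morphisms $LM\to M'$ and $M'\to M''$, obtaining a distinguished triangle $N\to M''\to ?$ — more precisely, one gets a triangle relating $N$, the cone of $LM\to M'$, and the cone of $LM\to M''$, and the latter cone can be identified (again via octahedra, now using the given triangle on $LM''$) with an extension of $RM''$ by $RM$ up to shift; (3) deduce from this that $N$ is an extension of $LM''$ by $RM[1]$... here one must be a little careful with which direction the extension goes. Actually the cleaner route is to take $LM'$ to be a cone of $RM[-1]\to LM''\to M'$-type data: I would build $LM'$ so that $LM\to LM'\to LM''\to LM[1]$ is distinguished (this is the top row), then let $RM'$ be the cone of $LM'\to M'$ and check the middle column is a weight decomposition. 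The fact that $LM'\in\cu_{w\le 0}$ follows from $LM,LM''\in\cu_{w\le 0}$ and part (\ref{iext}); the fact that $RM'\in\cu_{w\ge 0}[1]$ is where the real content lies. Then the bottom row $RM\to RM'\to RM''\to RM[1]$ and the commutativity of all squares come out of a final application of the octahedral axiom ($3\times 3$-diagram / nine-lemma for triangulated categories), with the third column $LM''\xrightarrow{a_{M''}}M''\xrightarrow{n_{M''}}RM''$ matching the prescribed decomposition.

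The main obstacle I anticipate is the bookkeeping in the octahedral/nine-lemma argument: getting a genuinely commutative $3\times 3$ diagram of distinguished triangles (rather than one that commutes only up to sign or up to non-canonical isomorphism), and in particular arranging that the third column is exactly the given weight decomposition of $M''$ rather than merely an isomorphic one. The standard reference for the $3\times 3$ diagram (the "nine-lemma" in triangulated categories, e.g. as in \cite{bbd} or May's axioms) produces such a diagram from any commutative square; here the relevant square is $LM\to M$, $LM\to LM''$ composed appropriately, so I would invoke that result and then verify that $RM'$, being the object completing the diagram, automatically lies in $\cu_{w\ge 0}[1]$ by extension-closedness applied to the bottom row $RM\to RM'\to RM''$. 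The verification that $LM'\in\cu_{w\le 0}$ and $RM'\in\cu_{w\ge 0}[1]$ is then immediate from part (\ref{iext}), and the orthogonality condition needed to call the middle column a weight decomposition is automatic since any object of $\cu_{w\le 0}$ is orthogonal to any object of $\cu_{w\ge 0}[1]$ by axiom (iii). I expect the proof to be essentially a careful diagram chase once the right octahedron is set up, so I would present it fairly tersely, emphasizing the choice of octahedron and leaving the routine commutativity checks to the reader.
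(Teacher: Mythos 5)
Your overall strategy (anchor a $3\times 3$/octahedron argument on the two given decompositions and get the middle column by extension-closure) is the standard one; the paper itself does not reprove this part but cites \cite{bws}, where the argument is of exactly this type. However, there is a genuine gap at the key step: you never construct the morphism that links the two prescribed weight decompositions, namely an arrow $LM''\to LM[1]$ (equivalently $LM''[-1]\to LM$) making the square with the connecting morphism $\partial\colon M''\to M[1]$ commute. Your "top row" $LM\to LM'\to LM''\to LM[1]$ cannot even be written down before such an arrow is chosen, and it must be chosen compatibly with $\partial$, since otherwise the nine-lemma will not return the given middle row and the given outer columns. This arrow exists precisely because of orthogonality: $LM''\in\cu_{w\le 0}$ while $RM[1]\in\cu_{w\ge 2}\subset\cu_{w\ge 1}$, so the composite $LM''\to M''\stackrel{\partial}{\to} M[1]\to RM[1]$ vanishes and $\partial\circ a_{M''}$ factors through $a_M[1]$ (this is exactly part (\ref{ifact})/(\ref{icompl}) of the proposition, i.e., axiom (iii) of Definition \ref{dwstr}). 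This is the only place where the weight-structure axioms beyond extension-closure enter, and it is what your sketch omits; relatedly, the square you propose as input to the nine-lemma ("$LM\to M$, $LM\to LM''$ composed appropriately") does not exist, since there is no given map $LM\to LM''$ --- the correct input square is $LM''[-1]\to LM$ sitting over $M''[-1]\to M$ via $a_{M''}[-1]$ and $a_M$.

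Once that square is in place, the rest of your plan goes through as you describe: complete it to a $3\times 3$ diagram whose rows and columns are distinguished, take $LM'$ to be the cone of $LM''[-1]\to LM$ and $RM'$ the cone of $LM'\to M'$; then $LM'\in\cu_{w\le 0}$ as an extension of $LM''$ by $LM$, and $RM'\in\cu_{w\ge 1}$ as an extension of $RM''$ by $RM$, both by part (\ref{iext}). No separate orthogonality check is needed for the middle column: Definition \ref{dwstr}(iv) imposes no condition on the individual triangle beyond membership of its outer terms in the two classes, so your closing remark about that being "automatic" is a non-issue. Finally, your worry about sign/anticommutativity is harmless here: in the standard $3\times 3$ statement the only possibly anticommuting square is the one involving the connecting morphisms of both a row and a column, and that square is not among the six squares displayed in the statement of part (\ref{iwdext}).
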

\begin{proof}

Assertions  \ref{idual}-\ref{igenlm} and 
\ref{icompl}--\ref{iwdext} 
 were proved  in \cite{bws} (cf.  Remark 1.2.3(4) of \cite{bonspkar} and pay attention to Remark \ref{rstws}(3) above!). 

To prove assertion \ref{ibond} it suffices to consider the case where $M$ is bounded above  and  left  $w$-degenerate since the remaining case is its dual (see assertion \ref{idual}). Now, these assumptions imply that $M\in \cu_{w\le n}$ and $M\in \cu_{w\ge n+1}$ for any large enough $n\in \z$; hence $M\perp M$, i.e., $M=0$.

Assertion \ref{ifact} follows from assertion \ref{icompl} immediately. Next, assertion \ref{iwdmod} is straightforward from the previous assertion  (applied to the morphism $\id_M$).

Lastly, assertion \ref{iwd0} is an easy consequence of assertion \ref{iext}; cf. Proposition 1.3.3(6) of \cite{bws}.
\end{proof}

\subsection{On weight Postnikov towers and weight complexes}\label{sswc}

To define the weight complex functor we  need the following definitions.

\begin{defi}\label{dfilt}

Let $M\in \obj \cu$.

1. A datum consisting of  $M_{\le i}\in \obj \cu$, $h_i\in \cu(M_{\le i},M)$, and $j_i\in  \cu(M_{\le i},M_{\le i+1})$ for $i$ running through integers will be called a {\it filtration on $M$} if we have $h_{i+1}\circ j_i=h_i$  for all $i\in \z$; we  write $\fil_M$ for this filtration.

A filtration will be said to be {\it bounded} if there exist $l\le m\in \z$ such that $M_{\le i}=0$ for all $i<l$ and $h_i$ are isomorphisms for all $i\ge m$.

2. A filtration as above equipped with distinguished triangles \begin{equation}\label{etpt}  M_{\le i-1}\stackrel{j_{i-1}}{\to}M_{\le i} \stackrel{c_{i}}{\to} M_i\stackrel{e_{i-1}}{\to}  M_{\le i-1}[1]\end{equation}
 for all $i\in \z$ will be called a {\it  Postnikov tower} for $M$ or for  $\fil_M$; this tower will be denoted by $Po_{\fil_M}$.

We  use the symbol $M^p$ to denote  $M_{-p}[p]$. 

3. If $\fil_{M'}=(M'_{\le i}, h'_i, j'_i)$ is a filtration on $M'\in \obj \cu$ and $g\in \cu(M,M')$ then we  call $g$ along with a collection of $g_{\le i}\in \cu(M_{\le i}, M'_{\le i})$  a {\it morphism of filtrations compatible with $g$} if  $g\circ h_i=h'_i\circ g_{\le i}$ and  $j'_i\circ g_{\le i} =g_{\le i+1}\circ j_i$ for all $i\in \z$.

Moreover, 
if we have Postnikov towers for $Po_{\fil_M}$ and $Po_{\fil_{M'}}$ for $\fil_{M}$ and  $\fil_{M'}$, respectively, then a datum consisting of a morphism of filtrations compatible with $g$ along with $g_i:M_i\to M'_i$ will be said to give a Postnikov tower morphism  $Po_{\fil_M}\to Po_{\fil_{M'}}$ if all the 
diagrams of the form 
$$\begin{CD}
 M_{\le i}@>{c_i}>>M_i@>{e_{i-1}}>>M_{\le i-1}[1]\\
@VV{g_{\le i}}V@VV{g_i}V@VV{g_{\le i-1}[1]}V \\
M'_{\le i}@>{c'_i}>>M'_i@>{e'_{i-1}}>>M'_{\le i-1}[1]
\end{CD}$$
are commutative. Lastly, we will write $g^i$ for $g_{-i}[i]$.
\end{defi}

Let us recall a few simple properties of these notions.

\begin{lem}\label{lrwcomp}
1. For a Postnikov tower as above the  morphisms \break $d^i= c_{-i-1}[i+1] \circ e_{-i-1}[i]: M^i\to M^{i+1}$ (for $i\in \z$) give a complex. 

We  call it the {\it complex  associated with} $Po_{\fil_M}$.

2. Any filtration can be completed to a Postnikov tower uniquely up to a non-unique isomorphism,  and any morphism of filtrations extends to a morphism of the corresponding Postnikov towers. 

Moreover, any morphism of Postnikov towers gives a morphism of the associated complexes.

3. If a filtration of $M$ is bounded then $M$ belongs to the extension-closure of  the corresponding $\{M_i\}$. \end{lem}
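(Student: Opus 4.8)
The plan is to prove the three parts in order, each being essentially a formal manipulation of the defining axioms of Postnikov towers (Definition \ref{dfilt}). For part 1, the point is to verify $d^{i+1}\circ d^i=0$. Unwinding the definition, $d^i = c_{-i-1}[i+1]\circ e_{-i-1}[i]$ where the maps come from the distinguished triangle \eqref{etpt}. The composite $d^{i+1}\circ d^i$ then contains, as a middle factor, a shift of $e_{-i-2}\circ c_{-i-1}$ (up to indexing of the shifts), which is the composition of two consecutive arrows in the distinguished triangle $M_{\le -i-2}\to M_{\le -i-1}\to M_{-i-1}\to M_{\le -i-2}[1]$; any two consecutive maps in a distinguished triangle compose to zero, so $d^{i+1}\circ d^i=0$. (One should be slightly careful to keep the index shifts straight, but this is the only content.) Thus $(M^i, d^i)$ is a complex.

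For part 2, the existence of a Postnikov tower completing a given filtration follows by choosing, for each $i$, a cone $M_i$ of $j_{i-1}:M_{\le i-1}\to M_{\le i}$, which gives the distinguished triangle \eqref{etpt}; uniqueness up to (non-unique) isomorphism is the standard non-functoriality of cones. To extend a morphism of filtrations $\{g_{\le i}\}$ to a morphism of Postnikov towers, one fills in the square $M_{\le i-1}\to M_{\le i}$ versus $M'_{\le i-1}\to M'_{\le i}$ (which commutes since $\{g_{\le i}\}$ is a morphism of filtrations) to a morphism of distinguished triangles by the standard completion axiom (TR3), producing the required $g_i:M_i\to M'_i$ making the diagrams in Definition \ref{dfilt}(3) commute. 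That a morphism of Postnikov towers induces a morphism of associated complexes is then immediate: commutativity of the relevant squares gives $g^{i+1}\circ d^i = d'^i\circ g^i$ by composing the two commuting squares that express $d^i$ (and their shifts), exactly as in part 1.

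For part 3, suppose the filtration is bounded, say $M_{\le i}=0$ for $i<l$ and $h_i$ an isomorphism for $i\ge m$. Then $M\cong M_{\le m}$, and the distinguished triangles \eqref{etpt} for $l\le i\le m$ read $M_{\le i-1}\to M_{\le i}\to M_i\to M_{\le i-1}[1]$, exhibiting $M_{\le i}$ as an extension of $M_i$ by $M_{\le i-1}$; starting from $M_{\le l-1}=0$, so $M_{\le l}\cong M_l$, an induction on $i$ shows each $M_{\le i}$ lies in the extension-closure of $\{M_l,\dots,M_i\}$, whence $M\cong M_{\le m}$ lies in the extension-closure of $\{M_i\}$.

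None of the three parts presents a genuine obstacle; the only thing requiring care is the bookkeeping of the degree shifts $[\,\cdot\,]$ in part 1 (and the corresponding computation in part 2), to be sure the "two consecutive maps in a distinguished triangle" really are what appears in $d^{i+1}\circ d^i$. Everything else is a direct appeal to the octahedral/completion axioms and the definitions, and in fact these statements are recorded (in the less careful treatment) in \cite[\S3]{bws}.
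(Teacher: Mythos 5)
Your proposal is correct and follows essentially the same route as the paper: the same vanishing of two consecutive maps in the triangle (\ref{etpt}) for part 1, the same appeal to TR1/TR3 plus the commuting squares for part 2, and the same induction from $M_{\le l-1}=0$ for part 3. No gaps.
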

\begin{proof}
1. We have $d^{i+1}\circ d^i=c_{-i-2}[i+2] \circ (e_{-i-2}[i+1]\circ c_{-i-1}[i+1]) \circ e_{-i-1}[i]=0$. 

2. The existence an uniqueness of these extensions is 
 a straightforward application of the axioms TR 1 and TR 3 of triangulated categories.

Lastly, $g^i$ give a morphism of complexes since all the diagrams of the form 
$$\begin{CD}
 M^i@>{e_{-1-i}[i]}>>M_{\le -i-1}[1+i]@>{c_{-1-i}[i+1]}>>M^{i+1} \\
@VV{g^i}V@VV{g_{\le -i-1}[1+i]}V@VV{g^{i+1}}V \\
M'{}^i@>{e'_{1-i}[i]}>>M'_{\le -i-1}[1+i]@>{c'_{-1-i}[i+1]}>>M'{}^{i+1}
\end{CD}$$
are commutative.

3. Assume that   $M_{\le l}=0$ 
for some $l\in \z$. Then obvious induction yields that the corresponding objects $M_{\le n}$ belong to the extension-closure of $\{M_i\}$ for all $n\ge l$. Thus if  $M_{\le m}\cong M$ for some $m\ge l$ then $M$ belongs to this extension-closure as well.
\end{proof}

Now let us relate these notions with weight structures.

\begin{defi}\label{dwpt}
Assume that $\cu$ is a weighted triangulated category (see Definition \ref{dwstr}). 

1. We  call a filtration (see Definition \ref{dfilt}) $\fil_M$ on $M\in \obj \cu$ a {\it weight filtration} (on $M$) if the morphisms $h_i:M_{\le i}\to M$ yield $i$-weight decompositions for all $i\in \z$ (in particular, 
  $M_{\le i}=w_{\le i}M$; see Remark \ref{rstws}(2)). 

We will call  the corresponding $Po_{\fil_M}$ (see Lemma \ref{lrwcomp}(2)) a {\it weight Postnikov tower} for $M$.

2. $\pwcu$ will denote the category whose objects  are objects of $\cu$ endowed with 
 weight Postnikov towers and whose morphisms are morphisms of Postnikov towers.

$\cuw$ will be the category whose objects are the same as for $\pwcu$ and such that  $\cuw(Po_{\fil_M},Po_{\fil_{M'}})=\imm (\pwcu(Po_{\fil_M},Po_{\fil_{M'}})\to \cu(M,M'))$ (i.e., we kill those morphisms of towers that are zero on the underlying objects).

3. For an additive category $\bu$, complexes $M,N\in \obj C(\bu)$, and morphisms $m_1,m_2\in C(\bu)(M,N)$  we  write $m_1\backsim m_2$ if $m_1-m_2=d_N\circ x+y\circ d_M$ for some collections of arrows $x^*,y^*:M^*\to N^{*-1}$, where $d_M$ and $d_N$ are the corresponding differentials.

We  call this equivalence relation the {\it weak homotopy (equivalence)} one (cf. Remark \ref{rwc}(\ref{irwc2}) below).
\end{defi}

\begin{pr}\label{pwt}
In addition to the notation introduced above (in particular, note that $g\in \cu(M,M')$) assume that $\bu$ is an additive category and $n\in \z$.
\begin{enumerate}
\item\label{iwpt1}
Any choice of $i$-weight decompositions of $M$ for  $i$ running through integers  gives a unique weight filtration on $M$ with $M_{\le i}=w_{\le i}M$, and  $M^i\in \cu_{w=0}$. 


\item\label{iwpt2} Any $g\in \cu(M,M')$ can be extended to a morphism of (any choice of) weight filtrations for $M$ and $M'$, respectively; hence it also extends to a morphism of weight Postnikov towers.

\item\label{iwpt3} The obvious functor $\cuw\to \cu$ is an equivalence of categories.

\item\label{iwhecat} Factoring morphisms in $K(\bu)$ by the weak homotopy equivalence relation yields an additive category $\kw(\bu)$. Moreover, the corresponding full functor $K(\bu)\to \kw(\bu)$ is (additive and) conservative.

\item\label{iwhefu}
Let $\ca:\bu\to \au$ be an additive functor, where $\au$ is any abelian category. Then for any $B,B'\in \obj K(\bu)$ any pair of weakly homotopic morphisms $m_1,m_2\in C(\bu)(B,B')$  induce equal morphisms of the homology $H_*((\ca(B^i)))\to H_*((\ca(B'^i)))$.

\item\label{iwhefun}
Sending an object of $\cuw$ into the complex  given by Lemma \ref{lrwcomp}(1) and a morphism of Postnikov towers into the corresponding $(g^i)$ (see Definition \ref{dfilt}(3)) yields a well-defined additive functor 
$t=t_w:\cuw\to \kw(\hw)$.

We  call this functor the {\it weight  complex} one. 
 We
will often write $t(M)$ for $M\in \obj \cu$ (resp. $t(g)$) assuming that some weight Postnikov tower for $M$ (resp. a lift of $g$ to $\cuw$) is chosen; we  say that $t(M)$ is {\it a choice of a weight complex} for $M$.

\item\label{irwcsh} $t\circ [n]_{\cuw}\cong [n]_{\kw(\hw)}\circ t$, where  $[n]_{\cuw}$ and  $[n]_{\kw(\hw)}$ are the obvious shift by $[n]$ (invertible) endofunctors of the categories  $\cuw$ and $\kw(\hw)$, respectively.

\item\label{iwcons} Assume that $M$ is bounded above (resp. below). Then $M\in \cu_{w\le n}$ (resp. $M\in \cu_{w\ge n}$) if and only if $t(M)$ belongs to $K(\hw)_{\wstu\le n}$ (resp. to $K(\hw)_{\wstu\ge n}$; cf. Remark \ref{rwc}(\ref{irwco}) below).

\item\label{iwcex} 
If $M\stackrel{g}{\to} M' \stackrel{f}{\to} M''$ is a distinguished triangle in $\cu$ then for any choice of $t(M)$ and $t(M'')$
 there exists a compatible choice of $(t(g),t(f))$  (so, the domain of this $t(g)$ is the chosen $t(M)$ and the target of  $t(f)$ is $t(M'')$) along with their lifts to $K(\hw)$ that can be completed to a distinguished triangle in $K(\hw)$.

\item\label{irwcons}
 If $M\in \cu_{w\ge n}$ for some $n\in \z$ then there exists  a weight Postnikov tower $Po_M$ with $M_{\le i}=w_{\le i}M=0$ for all $i<n$; consequently, $M^i=0$ for $i>-n$.

Moreover,  if $M\in \cu_{w\le  n}$ then we can take $M_{\le i}=w_{\le i}M=M$ for all $i\ge n$ to obtain $M^i=0$ for $i<-n$.

Consequently, if $M$ is left or right weight-degenerate then $t(M)=0$ for the corresponding choice  of a weight Postnikov tower for $M$; the composition of  the  obvious  embedding $\hw\to \cuw$ with $t$ is isomorphic to the obvious embedding $\hw \to \kw(\hw)$.

\item\label{iwch}  Assume  $N\in \cu_{w=0}$, and $N$ is a retract of $M$. Then $N$ is a also retract of  
 the object  $M^0$ for any choice of $t(M)=(M^i)$.

\item\label{iwcfunct} Let $\cu'$ be a triangulated category endowed with a weight structure $w'$; let
 $F:\cu\to \cu'$ be a weight-exact functor. Then $F$ is compatible with a naturally defined functor $F_w:\cuw\to \cuw'$, and the composition $t'\circ F_w$ 
 equals $\kw(\hf)\circ t$, where $t'$ is the weight complex functor corresponding to $w'$, and the functor $\kw(\hf):\kw(\hw)\to \kw(\hw')$ is the obvious $\kw(-)$-version of the restriction $\hf:\hw\to \hw'$ of $F$ to $\hw$.

\item\label{iwc2342} Assume that $t(g)=(g^i)$, and the arrows $g^i$ come from an actual $\pwcu$-morphism (between the corresponding weight Postnikov towers) that is compatible with $g$. Then any  family $({\tilde g}^{i})$ such that $({\tilde g}^{i})=(g^i)$ in $K(\hw)$ (that is,   $(\tilde g^{i})$ is homotopy equivalent to  $(g^{i})$)  extends to a morphism of these towers that is compatible with $g$ as well. 
\end{enumerate}
\end{pr}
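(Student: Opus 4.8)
The plan is to work entirely at the level of Postnikov towers and reduce the statement to the uniqueness part of Proposition \ref{pwt}(\ref{icompl}) together with a "perturbation by a homotopy" construction. Suppose we are given an actual $\pwcu$-morphism $(g_{\le i}, g_i)$ between the chosen weight Postnikov towers of $M$ and $M'$, compatible with $g$, and write $g^i = g_{-i}[i]$ for the induced maps $M^i\to M'^i$. Suppose $(\tilde g^i)$ is homotopy equivalent to $(g^i)$ in $K(\hw)$, so there are arrows $s^i\colon M^i\to M'^{i-1}$ with $\tilde g^i - g^i = d^{i-1}_{M'}\circ s^i + s^{i+1}\circ d^i_M$, where $d^\bullet$ denotes the differentials of the associated complexes. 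Unwinding the definition $M^i = M_{-i}[i]$, $d^i = c_{-i-1}[i+1]\circ e_{-i-1}[i]$, we may rewrite each $s^i$ as a shift of a morphism $\sigma_i\colon M_{-i}\to M'_{-i-1}$, i.e.\ a "negative-degree" arrow between the graded pieces of the two towers. The goal is to produce new maps $\tilde g_{\le i}\colon M_{\le i}\to M'_{\le i}$ and $\tilde g_i\colon M_i\to M'_i$ forming a Postnikov tower morphism compatible with $g$ whose induced complex morphism is exactly $(\tilde g^i)$.

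First I would construct the perturbed $\tilde g_{\le i}$. The natural guess is to leave $\tilde g_{\le i} = g_{\le i}$ unchanged — indeed, since $h_{i+1}\circ j_i = h_i$ and each $h_i$ is part of an $i$-weight decomposition, the maps $g_{\le i}$ are the essentially unique fillers (for $i$ sufficiently negative they are forced, and the compatibility $g\circ h_i = h'_i\circ g_{\le i}$ plus the uniqueness clause of Proposition \ref{pwt}(\ref{icompl}), applied with the strict inequality coming from $M_{\le i}\in\cu_{w\le i}$ and $w_{\ge i+1}M'\in\cu_{w\ge i+1}$, pins them down up to the appropriate indeterminacy). So the only freedom is in the $\tilde g_i\colon M_i\to M'_i$, and these must be chosen so that the squares
$$\begin{CD}
M_{\le i}@>{c_i}>>M_i@>{e_{i-1}}>>M_{\le i-1}[1]\\
@VV{g_{\le i}}V@VV{\tilde g_i}V@VV{g_{\le i-1}[1]}V\\
M'_{\le i}@>{c'_i}>>M'_i@>{e'_{i-1}}>>M'_{\le i-1}[1]
\end{CD}$$
commute. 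I would set $\tilde g_i = g_i + (\text{correction built from }\sigma_i \text{ and }\sigma_{i+1})$, the correction being $c'_i\circ(\text{lift of }\sigma)\circ e_{i-1}$-type terms; commutativity of the left square then needs $c'_i\circ g_{\le i} = \tilde g_i\circ c_i$, which holds because the correction term is post-composed with $c_i$ and the $e_{i-1}c_i = 0$ relation kills it, while commutativity of the right square is arranged by the choice of correction. One then checks that the induced complex morphism $\tilde g_{-i}[i]$ equals $g^i$ plus precisely $d^{i-1}_{M'}s^i + s^{i+1}d^i_M$, i.e.\ equals $\tilde g^i$, by expanding $d^i$ and $d^{i-1}$ in terms of the $c_j,e_j$ and matching terms with the defining formula for $\tilde g_i$.

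The main obstacle I anticipate is verifying that the perturbed data $(\tilde g_{\le i}, \tilde g_i)$ still satisfies the Postnikov-tower morphism axioms — in particular the commutativity of the middle square above after the correction is inserted — rather than merely having the correct image in $\kw(\hw)$; this is a diagram chase that must be done carefully using the relations $e_{i-1}\circ c_i = 0$, $j_{i-1} = $ (connecting data), and $h_{i+1}\circ j_i = h_i$, and the fact that $M_{\le i-1}[1]$ has weights in $\cu_{w\ge i}$ while $M_i\in\cu_{w=0}$, which forces certain composites to vanish or to be uniquely determined. A secondary subtlety is bookkeeping the shifts: the homotopy $s^i$ lives in $\cu(M^i, M'^{i-1}) = \cu(M_{-i}[i], M'_{-i-1}[i-1])$, so $\sigma_i\in\cu(M_{-i}, M'_{-i-1}[-1])$, and one must track the $[-1]$ consistently so that the correction term to $\tilde g_i = \tilde g_{-(-i)}[-i]$ — equivalently to $\tilde g_{-i}$ after an appropriate reshift — lands in the right Hom-group. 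Once these checks go through, compatibility with $g$ follows immediately since we did not alter any $g_{\le i}$, so $g\circ h_i = h'_i\circ g_{\le i} = h'_i\circ\tilde g_{\le i}$, and the proof is complete.
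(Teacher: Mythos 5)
There is a genuine gap, and it lies exactly at the step you single out as "the natural guess": you cannot leave the filtration-level maps unchanged, and in fact you have the degrees of freedom backwards. In Proposition \ref{pwt}(\ref{iwc2342}) the family $(\tilde g^i)$, i.e.\ the graded components $\tilde g_i\colon M_i\to M'_i$, is \emph{given}; what must be constructed are the maps $\tilde g_{\le i}\colon M_{\le i}\to M'_{\le i}$. If you set $\tilde g_{\le i}=g_{\le i}$, the left square forces $(\tilde g_i-g_i)\circ c_i=0$. Writing $\tilde g^i-g^i=d'^{\,i-1}\circ x^i+x^{i+1}\circ d^i$, only the second summand is killed by $c_i$ (via $e_{i-1}\circ c_i=0$ from (\ref{etpt})); the first summand gives $(d'^{\,-i-1}\circ x^{-i})[i]\circ c_i=c'_i\circ e'_i[-1]\circ x^{-i}[i]\circ c_i$, which is nonzero in general. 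So your claim that "the $e_{i-1}c_i=0$ relation kills" the correction term is false for half of the homotopy, and the construction as described breaks down at the first square. The appeal to the uniqueness clause of Proposition \ref{pbw}(\ref{icompl}) to argue that the $g_{\le i}$ are pinned down is also invalid: uniqueness there requires $m<l$ strictly, whereas a filtration morphism compares truncations at the \emph{same} index; two fillers $M_{\le i}\to M'_{\le i}$ compatible with $g$ may differ by any morphism factoring through $(w_{\ge i+1}M')[-1]$, and this non-uniqueness is precisely the room the statement needs.

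The paper's proof exploits exactly that room. After reducing (by subtracting the given $\pwcu$-morphism) to the case $g=0$ and $(g^i)=0$, so that $\tilde g^i=d'^{\,i-1}\circ x^i+x^{i+1}\circ d^i$, one takes the \emph{modified} filtration maps $\tilde g_{\le i}=e'_i[-1]\circ x^{-i}[i]\circ c_i$. These are compatible with $g=0$ because $h'_i\circ e'_i[-1]=h'_{i+1}\circ j'_i\circ e'_i[-1]=0$, they commute with the $j$'s since $j'_i\circ e'_i[-1]=0$ and $c_{i+1}\circ j_i=0$, and the two squares of the tower-morphism condition are then verified by short computations using only $e_{i-1}\circ c_i=0$ and its primed analogue. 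In short: the fix is not to perturb the $\tilde g_i$ (which are prescribed) while freezing the $g_{\le i}$, but to perturb the $g_{\le i}$ by a term built from the homotopy that dies under $h'_i$; without that modification the morphism of towers you write down does not exist.
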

\begin{proof}
Taking into account our definitions (cf. also Lemma  \ref{lrwcomp}(2)), assertions \ref{iwpt1}--
\ref{iwcons}  
 easily follow from the results of \cite{bws}; see Lemma 1.5.1(1,2) 
Lemma 3.1.4(I.1,II.1), Remark 3.1.7(2), Theorem 3.2.2(II), and Theorem 3.3.1(IV,I) of ibid., respectively.

Moreover, in Appendix \ref{sdwc}  
 below 
we 
 prove assertions \ref{iwpt1}--\ref{iwpt3}, \ref{iwhefun},  \ref{iwcons},  and \ref{iwcex}, whereas  assertions \ref{iwhecat} and \ref{iwhefu} are contained in Proposition \ref{pwwh}. 

 Next, all the statements in assertions \ref{irwcsh} and  \ref{irwcons} follow from our definitions immediately.

Assertion \ref{iwch} is easy as well. We can set $t(N)=N$ and $t(M)=(M^i)$. Applying $t$ to the fact that $N$ is a retract of $M$ we obtain that $\id_N$ factors through $t(M)$ in $\kw(\hw)$. Looking at the corresponding 
$C(\hw)$-morphisms we deduce that $N$ is  a retract of $M^0$ indeed (cf. the end of assertion \ref{iwcex}).  


To prove assertion \ref{iwcfunct} 
 we note that weight-exact functors send weight Postnikov towers and their morphisms in $\cu$ into that in $\cu'$. Hence we can define the functor $F_w$ in the obvious way, and the equality $t'\circ F_w=\kw(\hf)\circ t$ is automatic.

 Assertion \ref{iwc2342} is rather  technical and will not be applied in this paper. For this reason, we place this proof in  \S\ref{sirwc} as well.
\end{proof}

\begin{rema}\label{rwc}
\begin{enumerate}
\item\label{irwco}
Combining parts \ref{iwhefun}, \ref{iwpt3}, and \ref{iwhecat} of our proposition we obtain that all possible choices of $t(M)$ are homotopy equivalent. 
 
Thus the assumptions that $t(M)\in K(\hw)_{\wstu\ge n}$ and $t(M)\in K(\hw)_{\wstu\le n}$ do not depend on the choice of $t(M)$.

More generally, we will not mention choices when speaking of those properties of $t(M)$ that are preserved by $K(\hw)$-isomorphisms. 

\item\label{irwc2} The weak homotopy equivalence relation was introduced  in \S3.1 of \cite{bws} independently from the earlier and closely related notion of {\it absolute homology}; cf. Theorem 2.1 of \cite{barrabs}.

\item\label{irwc7} 
 $t$ can "usually" be "enhanced" to an  exact functor $t^{st}:\cu\to K(\hw)$; see Corollary 3.5 of \cite{sosnwc} and \S6.3 of \cite{bws}. However, the author currently does not know how to obtain exact weight complex functors for the Chow weight structures studied in \cite{bokum}.

Note also that to obtain a strong weight complex functor that is compatible with $t$ one 
  has to change the signs of differentials either in Lemma \ref{lrwcomp}(1)  or in the category $K(\hw)$ (see \S2 and Definition 5.7 of \cite{schnur}). However, since this matter does not appear to affect any of the applications of weight complexes known to the author, the reader may probably ignore it.

\item\label{irwc51} Our definition of weight complexes is not (quite) self-dual; see Remark \ref{rwcbws}(\ref{irwc52}) below for more detail. Note also that we do 
 use any octahedra  in our definitions (in contrast to Definition 1.5.8 of \cite{bws} and Definition 5.7 of \cite{schnur}); yet we will need an octahedral diagram in Appendix \ref{sdwc}  below.

Lastly, our weight Postnikov towers generalize {\it cellular towers} of \cite{marg}; see Theorem \ref{top}(\ref{itopcell},\ref{itopskel}) below and Theorem 4.2.3(7) of  \cite{bkwn}.
\end{enumerate}
\end{rema}

\subsection{Weight spectral sequences: reminder}\label{swss}

Let us recall weight spectral sequences for cohomology and homology. 

\begin{pr}\label{pwss}
Assume that $\cu$ is a weighted category.  

1. If $H$ is a cohomological functor from $\cu$ into an abelian category $\au$ then for any $M\in \obj \cu$  and any 
 choice of $t(M)$  there exists a spectral sequence $T=T_w(H,M)$ with $E_1^{pq}=H^{q}(M^{-p})$,  such that $M^i$ and the boundary morphisms of $E_1(T)$ come from this $t(M)$.

Moreover, $T_w(H,M)$ is $\cu$-functorial  in $M$ 
 starting from $E_2$; in particular, these levels of $T$ do not depend on any choices. 

 Furthermore, $T$ converges to $H^{p+q}(M)$ whenever $H$ kills $\cu_{w\ge i}$ and $\cu_{w\le -i}$ for $i$ large enough, or if $M$ is bounded above and $H$ kills  $\cu_{w\le -i}$ for $i$ large enough.

2. Dually, for a homological $H':\cu\to \au$, any $M\in \obj \cu$, and any 
 choice of $t(M)$   there exists a spectral sequence $T=T_w(H',M)$ with $E_1^{pq}(T)=H'_{-q}(M^{p})$, such that the boundary morphisms of $E_1(T)$ come from $t(M)$ as well. 

Moreover,  $T_w(H',M)$ is $\cu$-functorial  in $M$ 
   starting from $E_2$, and  converges to $H'_{-p-q}(M)$ whenever $H'$ kills $\cu_{w\ge i}$ and $\cu_{w\le -i}$ for $i$ large enough. 
\end{pr}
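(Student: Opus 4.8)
The statement is the construction and convergence of the weight spectral sequences associated with a choice of weight complex, and it is proved by applying the classical machinery of spectral sequences of an (exact-couple / filtered) Postnikov tower. The plan is to choose a weight Postnikov tower $Po_{\fil_M}$ for $M$ (which exists by Proposition \ref{pwt}(\ref{iwpt1})), apply the cohomological functor $H$ to the whole tower, and extract the spectral sequence from the resulting system of long exact sequences attached to the triangles \eqref{etpt}. Concretely, applying $H$ to the triangles $M_{\le i-1}\to M_{\le i}\to M_i\to M_{\le i-1}[1]$ for all $i$ produces an exact couple with $D_1$-terms $H^{*}(M_{\le i})$ and $E_1$-terms $H^{*}(M_i)$; since $M^i=M_{-i}[i]$ lies in $\cu_{w=0}$ and $M_{-p}=M^p[-p]$, reindexing gives $E_1^{pq}=H^{q}(M^{-p})$, with $d_1$ induced by the maps $d^i=c_{-i-1}[i+1]\circ e_{-i-1}[i]$ of Lemma \ref{lrwcomp}(1). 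The homological case is entirely dual: apply $H'$ and read off $E_1^{pq}(T)=H'_{-q}(M^{p})$.

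For the functoriality from $E_2$ on, I would invoke Proposition \ref{pwt}(\ref{iwpt2}) (any $g\in\cu(M,M')$ extends to a morphism of weight Postnikov towers) together with Proposition \ref{pwt}(\ref{iwpt3}) and the weak-homotopy discussion of Remark \ref{rwc}(\ref{irwco}): two different lifts of $g$ differ by a weak homotopy, and a weak homotopy induces the \emph{same} map on the $E_1$-homology of $H$ applied to the complex — this is exactly Proposition \ref{pwt}(\ref{iwhefu}) — hence induces the same map from $E_2$ onward. The independence of $E_2$ (and higher terms) from the choice of $t(M)$ then follows by applying this with $g=\id_M$ and two different towers. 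This is the standard argument that a spectral sequence of a filtered object depends only on the object past the page where the differential ``sees'' only the associated graded.

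For convergence, the point is that under either of the stated hypotheses the exact couple is eventually constant in the relevant range. If $H$ kills $\cu_{w\ge i}$ and $\cu_{w\le -i}$ for $i$ large, then by Proposition \ref{pwt}(\ref{irwcons}) (together with $w_{\le i}M$ and $w_{\ge i}M$ lying in the appropriate classes, cf. Proposition \ref{pbw}(\ref{iwd0})) the terms $H^{*}(M_{\le i})$ stabilize to $H^{*}(M)$ for $i\gg 0$ and vanish for $i\ll 0$ in each fixed total degree, so the filtration on $H^{p+q}(M)$ is finite and exhaustive and the spectral sequence converges to $H^{p+q}(M)$; if instead $M$ is bounded above, one may choose a tower with $M_{\le i}=M$ for $i\gg0$ by Proposition \ref{pwt}(\ref{irwcons}), so the upper stabilization is automatic and one only needs $H$ to kill $\cu_{w\le -i}$ for $i\gg0$ to get the lower vanishing. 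The dual bookkeeping gives the homological convergence to $H'_{-p-q}(M)$.

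\textbf{Main obstacle.} The genuinely non-formal input is the functoriality and choice-independence from $E_2$: naively a weight Postnikov tower is unique only up to a \emph{non-unique} isomorphism (Remark \ref{rstws}(2), Lemma \ref{lrwcomp}(2)), and a morphism $g$ lifts to the tower non-uniquely, so one must argue that all this ambiguity is absorbed by weak homotopy and therefore invisible after passing to $E_1$-homology. This is precisely where Proposition \ref{pwt}(\ref{iwhefu}) and the conservativity/well-definedness statements of Proposition \ref{pwt}(\ref{iwhecat})--(\ref{iwpt3}) do the work; once those are in hand the spectral-sequence formalism is routine. I would also be careful with the sign/indexing conventions ($M^p=M_{-p}[p]$, the homological convention for $w$ of Remark \ref{rstws}(3)) so that the stated $E_1$-pages and the target degrees $H^{p+q}(M)$ and $H'_{-p-q}(M)$ come out with the claimed indices.
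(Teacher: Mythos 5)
Your proposal is correct and follows essentially the same route as the paper's proof, which simply cites Theorems 2.4.2 and 2.3.2 of \cite{bws}: there the weight spectral sequences are constructed exactly as you describe, from the exact couple obtained by applying $H$ to a weight Postnikov tower, with $E_2$-functoriality coming from the weak homotopy invariance arguments (cf. Remark \ref{re1ss}) and convergence from the degreewise stabilization of $H^{*}(w_{\le i}M)$ under the stated vanishing or boundedness hypotheses.
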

\begin{proof}
I. This is (most of) Theorem 2.4.2  of \cite{bws} (yet take into account Remark \ref{rstws}(3)!).  

2. See Theorem 2.3.2  of \cite{bws} (yet note that the numeration of homology in the current paper is opposite 
 to that in loc. cit.!).
\end{proof}

\begin{rema}\label{re1ss} 
Recall that  weight spectral sequences in \cite[\S2]{bws} were constructed using weight Postnikov towers of objects.

 Now, all  choices of $t(M)$ (that come for weight Postnikov towers for this object) are homotopy equivalent (see Remark \ref{rwc}(\ref{irwco})); yet it is not quite true that all complexes that are $K(\hw)$-isomorphic to a given $t(M)$ can be "realized" by  $w$-Postnikov towers (see Remark \ref{realiz} below). However, this subtlety is not really essential for the theory of weight spectral sequences.

Let us justify this claim for cohomological weight spectral sequences (coming from part 1 of our proposition). For {\bf any} complex $(M'{}^i)\in \obj K(\hw)$  that is homotopy equivalent to $t(M)$ the homology of the complex $H^{q}(M'{}^{-*})$ is isomorphic to that for  $H^{q}(M{}^{-*})$ (for our fixed choice $t(M)=(M^i)$). 
 Hence for any $(M'{}^i)$ of this sort the corresponding $E_1$-level of $T_w(H,M)$ "fits" with higher levels of $T$ computed using any fixed  weight Postnikov tower for $M$.
\end{rema}

\subsection{An application to "detecting weights" by weight-exact functors}\label{sweap}  

Proposition \ref{pwt} easily implies that certain weight-exact functors are conservative.  We will discuss some motivic applications of the following theorem in Remark \ref{rayoub}(2,3) below.

\begin{theo}\label{twcons}
Let $\cu$ and $\cu'$ be  triangulated categories endowed with weight structures $w$ and $w'$, respectively; let $F:\cu\to \cu'$ be a weight-exact functor (see Definition \ref{dwso}(\ref{idwe})).

Assume that the induced functor $\hf:\hw\to \hw'$ is full and conservative,  $M$ is an object of $\cu$, and $n\in \z$.

1. Suppose that $M$ is $w$-bounded above (resp. below). Then $F(M)$ belongs to $ \cu'_{w'\le n}$ (resp. to $ \cu'_{w'\ge n}$) if and only if  $M$ belongs to $ \cu_{w\le n}$ (resp. to $ \cu_{w\ge n}$).

2. Assume that $M$ is $w$-bounded above (resp., below) and $F(M)=0$. Then $M$ is right (resp. left) $w$-degenerate.

Moreover, if $M$ is $w$-bounded  then it is zero.

3.  Suppose that $M$ is $w$-bounded above; for any $i\in \z$ such that $M\in \cu_{w\le i}$ and any $N\in \cu_{w=i}$ assume that the homomorphism $\cu(M,N)\to \cu'(F(M),F(N))$ induced by $F$ is zero.

Then $M$ is right $w$-degenerate.
\end{theo}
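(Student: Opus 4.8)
The plan is to reduce all three assertions to the conservativity-type properties of the weight complex functor established in Proposition \ref{pwt}, using fullness and conservativity of $\hf$ to transfer information from $F(M)$ back to $M$. The key observation throughout is that $\hf:\hw\to\hw'$ induces a functor $\kw(\hf):\kw(\hw)\to\kw(\hw')$ compatible with weight complexes (Proposition \ref{pwt}(\ref{iwcfunct})), and that fullness plus conservativity of $\hf$ make $\kw(\hf)$ "almost conservative" on bounded-above (or bounded-below) complexes: a complex $(X^i)$ over $\hw$ concentrated in degrees $\le m$ lies in $K(\hw)_{\wstu\le n}$ if and only if its image under $\kw(\hf)$ does — because the relevant condition is that certain differentials be (split) epimorphisms after passing far enough, and one detects split epimorphisms in $\hw$ via $\hf$ by a diagram chase using fullness and conservativity. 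This is the main lemma one needs, and it is where the real work lies.

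For part 1, suppose $M$ is $w$-bounded above; fix a choice $t(M)=(M^i)$, which by Proposition \ref{pwt}(\ref{iwch}) (really \ref{irwcons}) may be taken concentrated in degrees bounded above. By Proposition \ref{pwt}(\ref{iwcfunct}), $\kw(\hf)(t(M))$ is a choice of $t'(F(M))$. Now $F(M)\in\cu'_{w'\le n}$ iff $t'(F(M))\in K(\hw')_{\wstu\le n}$ by Proposition \ref{pwt}(\ref{iwcons}) (as $F(M)$ is $w'$-bounded above, $F$ being weight-exact), iff $\kw(\hf)(t(M))\in K(\hw')_{\wstu\le n}$, iff — by the main lemma on detecting the stupid filtration — $t(M)\in K(\hw)_{\wstu\le n}$, iff $M\in\cu_{w\le n}$ by Proposition \ref{pwt}(\ref{iwcons}) again. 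The bounded-below case is dual (apply Proposition \ref{pbw}(\ref{idual})). Part 2 then follows formally: if $F(M)=0$ then $F(M)\in\cu'_{w'\le n}$ for all $n$, so by part 1 $M\in\cu_{w\le n}$ for all $n$, i.e. $M$ is right $w$-degenerate; if moreover $M$ is $w$-bounded it is also $w$-bounded below, hence left $w$-degenerate too (same argument), hence $M=0$ by Proposition \ref{pbw}(\ref{ibond}).

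For part 3, the idea is to show by descending induction on $i$ (starting from $i$ large, where $M\in\cu_{w\le i}$ holds trivially since $M$ is $w$-bounded above) that $M\in\cu_{w\le i}$ for every $i$, which is exactly right $w$-degeneracy. Fix $i$ with $M\in\cu_{w\le i}$; we want $M\in\cu_{w\le i-1}$, equivalently (by Proposition \ref{pwt}(\ref{iwcons}), since we may choose $t(M)$ concentrated in degrees $\le -i$ by Proposition \ref{pwt}(\ref{irwcons})) that the top term $M^{-i}\in\cu_{w=0}$ "splits off" — more precisely that $t(M)$ is homotopy equivalent to a complex concentrated in degrees $\le -i-1$, which happens iff the differential $d^{-i-1}:M^{-i-1}\to M^{-i}$ is a split epimorphism in $\hw$. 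The hypothesis that $\cu(M,N)\to\cu'(F(M),F(N))$ is zero for all $N\in\cu_{w=i}$ will force, via the weight spectral sequence or directly via Proposition \ref{pwt}, that $M^{-i}$ itself — or rather a suitable quotient/summand detecting non-degeneracy — has the property that a certain class vanishes; applying $F$ and using that $\hf$ is full and conservative one upgrades this to splitness downstairs to splitness in $\hw$. The main obstacle, and the most delicate point, is precisely formulating and proving this last passage: relating the vanishing of $\cu(M,N)\to\cu'(F(M),F(N))$ to a splitting statement about $t(M)$, and then descending that splitting from $\hw'$ to $\hw$; I expect one uses that $M\to w_{\ge i}M[?]$-type maps into objects $N$ of $\cu_{w=i}$ see precisely the top differential of the weight complex (cf. Proposition \ref{pbw}(\ref{ifact}) and Proposition \ref{pwt}(\ref{iwch})), so that the hypothesis says this top differential dies after $F$, whence by conservativity of $\hf$ it is already split (or zero modulo homotopy) upstairs.
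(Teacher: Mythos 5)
Your treatment of parts 1 and 2 is essentially the paper's own argument: the paper also reduces part 1 to a statement about weight complexes via Proposition \ref{pwt}(\ref{iwcons},\ref{iwcfunct},\ref{irwcons}) and proves your "main lemma" by choosing the extremal $m$ with $t(M)\in K(\hw)_{\wstu\le m}$ and killing the outermost term: the point is that the corresponding differential of $t'(F(M))=(F(M^i))$ splits, and Lemma \ref{lsplit} (fullness plus conservativity of $\hf$, plus retraction-closedness of $\hw$ in $\cu$) lifts this splitting to $\hw$, contradicting minimality. Part 2 is then formal, exactly as you say. One small correction: with the paper's conventions, for the $\cu_{w\le n}$ half (terms $M^i$ vanishing for $i\ll 0$) the differential you must split is the bottom one $d^{-m}:M^{-m}\to M^{1-m}$, and it must be split \emph{monomorphic}; split epimorphisms of the top differential are what occur in the dual ($\cu_{w\ge n}$) half. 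This is only a bookkeeping slip, but you should fix the indexing before writing the lemma out.

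Part 3, however, contains a genuine gap, and you flag it yourself. The hypothesis is not that $F$ kills a differential of the weight complex, nor does it hand you a splitting in $\hw'$ that you could try to "descend" by conservativity of $\hf$ (conservativity does not imply faithfulness, so "dies after $F$, hence split or zero upstairs" is not a valid inference). The actual mechanism is different and stays at the level of weight decompositions of $M$ itself. Assume $M\in\cu_{w\le m}$ and fix an $m{-}1$-weight decomposition $w_{\le m-1}M\to M\to N\to w_{\le m-1}M[1]$ with $N=w_{\ge m}M$; by Proposition \ref{pbw}(\ref{iwd0}) one has $N\in\cu_{w=m}$, so the hypothesis applies to the pair $(M,N)$ with $i=m$. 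Applying the weight-exact $F$ gives an $m{-}1$-weight decomposition of $F(M)$ whose second map is $z=F(M\to N)$; the hypothesis forces $z=0$ (the paper even shows the whole group $\cu'(F(M),F(N))$ vanishes, using fullness of $\hf$ together with the computation of $\cu(M,N)$ as a quotient of $\hw(M^{-m},N[-m])$ coming from Proposition \ref{pwss}). Hence $F(M)$ is a retract of $F(w_{\le m-1}M)\in\cu'_{w'\le m-1}$, so $F(M)\in\cu'_{w'\le m-1}$, and \emph{now} part 1 gives $M\in\cu_{w\le m-1}$; descending over all $m$ yields right $w$-degeneracy. So the splitting of the bottom differential of $t(M)$ that you were trying to produce directly is obtained only a posteriori, through part 1 applied to $F(M)$, and the steps your sketch is missing are precisely the choice of $N=w_{\ge m}M$, the vanishing of the truncation morphism after $F$, and the retract argument that puts $F(M)$ into $\cu'_{w'\le m-1}$.
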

\begin{proof}
1. The "if" implication is immediate from the definition of weight-exactness. So we verify the converse implication.

It clearly suffices to consider the case where  $M$ is $w$-bounded above and $F(M)$ belongs to $ \cu'_{w'\le n}$ since the remaining case is its dual (see Proposition \ref{pbw}(\ref{idual})). We will write $t(M)=(M^i)$ for a choice of a weight complex for $M$; the boundary morphism of this complex will be denoted by $d^i$. 

According to Proposition \ref{pwt}(\ref{iwcons}) it suffices to verify that $t(M)\in K(\hw)_{\wstu\le n}$ (see Remark \ref{rstws}(2)). Now choose the minimal integer $m\ge n$ such that  $t(M)\in K(\hw)_{\wstu\le m}$;  we should prove that $m=n$.

Next, we can assume that $M^i=0$ for $i<-m$ (see Proposition \ref{pwt}(\ref{irwcons})). By Proposition \ref{pwt}(\ref{iwcfunct}), the complex $t_{w'}(F(M))=(M'{}^i)$ can be obtained  from $(M^i)$ by means of termwise application of $F$; hence $M'{}^i=0$ for $i<-m$ (and this choice of $t_{w'}(F(M))$) as well. 

Now suppose that  $m-1\ge n$. Then $(M'{}^i)\in  K(\hw')_{\wstu\le m-1}$; hence the morphism $d'{}^{-m}$ is  split monomorphic, i.e., there exists $s'\in \hw'(M'{}^{1-m},M'{}^{-m})$ such that $s'\circ d'{}^{-m}=\id_{M'{}^{-m}}$. Since $\hf$ is full and conservative and $\hw$ is a retraction-closed subcategory of $\cu$, the morphism $d^{-m}$ is isomorphic to the split monomorphism $M^{-m}\to (M^{-m}\bigoplus C)\cong M^{1-m}$ according to Lemma \ref{lsplit} below (here $C$ is some object of $\hw$).   Thus  $t(M)$ actually belongs to $ K(\hw)_{\wstu\le m-1}$ and we obtain a contradiction as desired. 

2. If $F(M)=0$ then $F(M)$ belongs to $\cu_{w\le m}$ and also to $\cu_{w\ge m}$ for all $m\in \z$. Hence the assertion follows from the previous one immediately.

Next, if $M$ is right or left degenerate and   also $w$-bounded then 
 $M=0$ 
  according to Proposition \ref{pbw}(\ref{ibond}).

3. To prove that $M$ is right $w$-degenerate it clearly suffices to verify for any $m\in \z$ that $M$ belongs to $ \cu_{w\le m-1}$ whenever it belongs to $ \cu_{w\le m}$. We assume the latter and choose an $m-1$-weight decomposition triangle $$w_{\le m-1}M\to M\to w_{\ge m}M\to w_{\le m-1}M[1].$$ We will write $N$ for $w_{\ge m}M$; note that $N$ belongs to $\cu_{w=m}$ according to Proposition \ref{pbw}(\ref{iwd0}). 
 Since $F$ is weight-exact, the corresponding triangle  
\begin{equation}\label{efwd}
F(w_{\le m-1}M)\to F(M)\stackrel{z}\to F(N)\to F(w_{\le m-1}M)[1]
\end{equation}
is an $m-1$-weight decomposition of $F(M)$. 

Similarly to the proof of assertion 1, we choose $t(M)=(M^i)$  to satisfy $M^i=0$ for $i<-m$.  Then  Proposition \ref{pwss}(1) easily implies  that $\cu(M,N)$ is the quotient of $\hw(M^{-m},N[-m])$ by the corresponding image of $\hw(M^{1-m},N[-m])$. Moreover, since the functor $F$ is weight-exact, we can take $t_{w'}(F(M))=(F(M^i))$ (see Proposition \ref{pwt}(\ref{iwcfunct})); thus the group $\cu(F(M),F(N))$ is the corresponding quotient of $\hw'(F(M^{-m}),F(N[-m]))$. Since $\hf$ is full, it follows that  the homomorphism $\cu(M,N)\to \cu'(F(M),F(N))$ is surjective under our assumptions; thus $\cu'(F(M),F(N))=0$. We obtain that the morphism $z$ in (\ref{efwd}) is zero. Hence the object $F(M)$ is a retract of $F(w_{\le m-1}M)$; thus it belongs to $ \cu'_{w'\le m-1}$. Applying assertion 1 we conclude that $M$ belongs to $ \cu_{w\le m-1}$ indeed. 
\end{proof}

Thus it remains to prove the following statement.

\begin{lem}\label{lsplit}
Let $G:\bu\to \bu'$ be a full and conservative additive functor (between additive categories), 
 and $h\in \bu(M,N)$ for some objects $M,N\in \obj \cu$. 

1. 
 If $G(h)$ is split injective then  $h$ is split injective as well.

2. Suppose that $\bu$ is a full retraction-closed subcategory of a triangulated category $\cu$ and 
 $h$ is split injective. Then $N$ can be presented as the direct sum  $M\bigoplus N'$ for some $N'\in \obj \bu$ so that 
 $h=\id_M\bigoplus 0:M\to N$. 
\end{lem}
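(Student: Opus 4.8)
\textbf{Proof plan for Lemma \ref{lsplit}.}

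For part 1, the plan is to exploit fullness of $G$ to lift a splitting. Since $G(h)$ is split injective, there is some $s'\in \bu'(G(N),G(M))$ with $s'\circ G(h)=\id_{G(M)}$. By fullness of $G$, choose $s\in \bu(N,M)$ with $G(s)=s'$. Then $G(s\circ h)=s'\circ G(h)=\id_{G(M)}=G(\id_M)$, so $G(s\circ h-\id_M)=0$. This does not immediately give $s\circ h=\id_M$ — faithfulness is not assumed — so the next step is to invoke conservativity instead. Consider the morphism $e=s\circ h\in \bu(M,M)$; we have $G(e)=\id_{G(M)}$, hence $G(e)$ is an isomorphism, hence by conservativity $e$ is an isomorphism. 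Therefore $e^{-1}\circ s$ is a left inverse to $h$, so $h$ is split injective in $\bu$. The only subtlety to check is that conservativity is being applied to an endomorphism, which is fine: a conservative functor reflects isomorphisms, and $e$ becomes an isomorphism under $G$.

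For part 2, the plan is to pass from "split injective" to an honest direct sum decomposition inside the ambient triangulated category $\cu$, then check the summand lies in $\bu$. By part 1 (or by hypothesis) pick $s\in \bu(N,M)$ with $s\circ h=\id_M$. Then $p=h\circ s\in \bu(N,N)$ is an idempotent ($p^2=h\circ s\circ h\circ s=h\circ s=p$). Viewing $p$ as an endomorphism in the triangulated — hence in particular additive, and here we use that $\cu$ itself need not be Karoubian, but we only need that retractions split, which they do in any triangulated category: the retract $M$ of $N$ (via $h,s$) is literally a direct summand, $N\cong M\bigoplus N'$ with $N'=\co(h)[-1]$ or equivalently the image of $\id_N-p$. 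So set $N'$ to be this complementary summand; then under the identification $N\cong M\bigoplus N'$ we have $h=\id_M\bigoplus 0$ and $s$ is the projection. It remains to observe $N'\in \obj\bu$: since $\bu$ is a full retraction-closed subcategory of $\cu$ and $N'$ is a retract of $N\in\obj\bu$ (via $\id_N-p$, which is a morphism in $\bu$), we get $N'\in\obj\bu$ by the retraction-closedness hypothesis. Finally one records that $h=\id_M\bigoplus 0\colon M\to M\bigoplus N'=N$, which is the claimed form.

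The main obstacle is purely a matter of care rather than depth: one must resist the temptation to conclude $s\circ h=\id_M$ from $G(s\circ h)=\id_{G(M)}$ (which would need faithfulness, not assumed), and instead route the argument through conservativity applied to the idempotent-up-to-correction endomorphism $s\circ h$. A second minor point is making sure the direct sum decomposition in part 2 is taken in $\cu$ (where retracts split automatically) and only afterwards checking membership in $\bu$ via retraction-closedness — rather than trying to split the idempotent inside the possibly non-Karoubian category $\bu$ directly. Both points are standard once flagged, so no serious calculation is involved.
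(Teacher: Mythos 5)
Your proposal is correct and takes essentially the same route as the paper: part 1 lifts the splitting via fullness and then applies conservativity to the endomorphism $s\circ h$ (rather than erroneously invoking faithfulness), and part 2 splits off the complementary summand inside the triangulated category $\cu$ and uses retraction-closedness of $\bu$ to conclude $N'\in\obj\bu$. The only cosmetic slip is that the complementary summand is $\co(h)$ rather than $\co(h)[-1]$, which does not affect the argument.
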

\begin{proof}
1. Since $G(h)$ splits, there exists $s'\in \bu'(G(N),G(M)$ such that $s'\circ G(h)=\id_{G(M)}$. Since $G$ is full, there exists $s\in \bu(N,M)$ such that $s'=G(s)$. Since $G$ is conservative and $G(s\circ h)=\id_{F(M)}$, the composition $s\circ h$ is an automorphism $c$ of $M$. Thus the morphism $h$ is split by $c\ob \circ s$.

2. Since $\cu$ is a triangulated category, $h$ can be presented in the  desired form in $\cu$. Since $\bu$ is retraction-closed in $\cu$, we obtain that the corresponding object $N'$ actually belongs to $\bu$.
\end{proof}

\begin{rema}\label{rwcons} 
1. Respectively, one may say that any functor $F$ as in our theorem detects weights of $w$-bounded objects, i.e., looking at $F(M)$ one can find out whether $M\in \cu_{w\ge n}$ and whether $M\in \cu_{w\le n}$.  This property of $F$ was called {\it $w$-conservativity} in \cite{bachinv}; see Proposition 17 and Theorem 22 of ibid.

2. 
The  bounded ("moreover") 
 part of Theorem \ref{twcons}(2) substantially generalizes Theorem 2.5 of \cite{wildcons}, where  $\hw$ was assumed to be {\it semi-primary} (in the sense of  \cite[Definition 2.3.1]{andkahn}) and Karoubian. Moreover, part 1 of our theorem obviously implies Theorem 2.8(a--c) of ibid. (where  $\hw$ was assumed to be  semi-primary as well). 

3. Proposition 3.2.1 of \cite{bkwn} also gives a generalization of part d of loc. cit.; however, a somewhat stronger assumption on $\hf$ is imposed (cf. Corollary \ref{consb}(2) below). Under this condition other parts of our theorem are 
 extended to objects that are not necessarily $w$-bounded either above or below.  An important tool for obtaining (unbounded) results of this sort  was the theory of {\it morphisms killing weights (in a range)}; we do not treat this notion in the current paper.

4. Even though the conditions of the theorem 
 seem to be somewhat restrictive, it appears to be rather difficult to weaken them. Obviously, the "heart-conservativity" assumption in it cannot be dropped (cf. Remark \ref{rdetect}(4) below). 

It is an interesting question 
to what degree the fullness condition can be weakened. Looking at the proof of the theorem one can easily see that  instead of assuming that $\hf$ is (conservative and) full it suffices to assume the existence of an additive functor $G:\hw'\to \bu$ such that the composition $G\circ \hf$ satisfies this condition. In particular, here one can take $G$ to be the functor that kills the {\it radical} morphism ideal of $\hw'$ (see Definition 1.4 of \cite{wildshim}).   
\end{rema}

\section{On pure functors and detecting weights}\label{spuredet}

In \S\ref{spured} we define pure functors as those that kill all weights except $0$, and prove that they can be expressed in terms of weight complexes.

In \S\ref{sdet} we  study conditions ensuring that a pure functor detects weights of objects. The results of this section are important for the study of Picard groups of weighted triangulated categories   carried over in \cite{bontabu}.

In \S\ref{smash} we prove a rich collection of properties of {\it smashing} weight structures (these are weight structures 
"coherent with (small) coproducts") along with those pure functors that preserve coproducts.

\subsection{Pure functors: equivalent definitions}\label{spured}

Let us define an important class of (co)homological functors from weighted categories. 

\begin{defi}\label{dpure}
Assume that  $\cu$ is endowed with a  weight structure $w$.

We will say that a (co)homological functor $H$ from $\cu$ into an abelian category $\au$ is {\it $w$-pure} (or just pure if the choice of $w$ is clear) if $H$ kills both $\cu_{w\ge 1}$ and $\cu_{w\le -1}$.
\end{defi}

Now we give an explicit description of pure functors.

\begin{theo}\label{tpure}
1. Let $\ca:\hw\to \au$ be an additive functor, where $\au$ is an abelian category. Choose a weight complex $t(M)=(M^j)$ for each $M\in \obj \cu$, and denote by $H(M)=H^{\ca}(M)$ the zeroth homology of the complex $\ca(M^{j})$. Then $H(-)$ yields a homological functor that 
does not depend on the choices of weight complexes.  Moreover, the assignment $\ca\mapsto H^\ca$ is natural in $\ca$. 

2. The correspondence $\ca\to H^\ca$ is an equivalence of categories between the following (not necessarily locally small) categories of functors: $\adfu(\hw,\au)$ and the category of pure homological functors from $\cu$ into $\au$.

3. Assume that $w$ is bounded. Then a (co)homological functor $H$ from $\cu$ into $\au$ is $w$-pure if and only if it annihilates $\cu_{w=i}$ for all $i\neq 0$.

\end{theo}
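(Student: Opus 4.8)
The plan is to prove the three parts in order, leaning heavily on Proposition \ref{pwt} and Proposition \ref{pwss}. For part 1, given $\ca \colon \hw \to \au$, I would first observe that the composite functor $\kw(\hw) \xrightarrow{\ca} \kw(\au) \xrightarrow{H_0} \au$ sending a complex $(M^j)$ to the zeroth homology of $\ca(M^j)$ is well-defined: by Proposition \ref{pwt}(\ref{iwhefu}), weakly homotopic morphisms of complexes induce equal maps on homology, so $H_0 \circ \kw(\ca)$ makes sense on $\kw(\hw)$. Composing with the weight complex functor $t_w \colon \cuw \to \kw(\hw)$ of Proposition \ref{pwt}(\ref{iwhefun}) and with the inverse of the equivalence $\cuw \to \cu$ of Proposition \ref{pwt}(\ref{iwpt3}), I get a functor $H^{\ca} \colon \cu \to \au$; independence from the choice of weight complex is exactly the fact that all choices of $t(M)$ are homotopy equivalent (Remark \ref{rwc}(\ref{irwco})), and homotopy equivalences induce isomorphisms on $H_0 \circ \ca$. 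Homologicality: given a distinguished triangle in $\cu$, Proposition \ref{pwt}(\ref{iwcex}) gives compatible choices of weight complexes fitting into a distinguished triangle in $K(\hw)$, hence (applying $\ca$ termwise and taking the long exact homology sequence of the resulting triangle in $K(\au)$) a long exact sequence, of which the relevant stretch around degree $0$ is the required three-term exactness; shifting via Proposition \ref{pwt}(\ref{irwcsh}) upgrades this to the full long exact sequence. Naturality in $\ca$ is straightforward from the construction. Finally, $H^{\ca}$ is pure: if $M \in \cu_{w\ge 1}$ then by Proposition \ref{pwt}(\ref{irwcons}) we may choose $t(M)$ concentrated in degrees $\le -1$, so its zeroth homology vanishes, and dually for $\cu_{w\le -1}$.

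For part 2, I would exhibit a quasi-inverse to $\ca \mapsto H^{\ca}$. Given a pure homological functor $H \colon \cu \to \au$, restrict it along the embedding $\hw \hookrightarrow \cu$ to get an additive functor $H|_{\hw} \colon \hw \to \au$. The two round-trips must be identified with the identity. For $\ca \mapsto H^{\ca} \mapsto H^{\ca}|_{\hw}$: by the last sentence of Proposition \ref{pwt}(\ref{irwcons}), the composite $\hw \to \cuw \xrightarrow{t} \kw(\hw)$ is the obvious embedding, so for $N \in \cu_{w=0}$ the complex $t(N)$ is just $N$ in degree $0$, whence $H^{\ca}(N) = H_0(\ca(N)) = \ca(N)$ naturally. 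For $H \mapsto H|_{\hw} \mapsto H^{H|_{\hw}}$: I must show $H^{H|_{\hw}} \cong H$ as functors on $\cu$. Here the weight spectral sequence is the right tool — Proposition \ref{pwss}(2) gives, for the homological functor $H$ and any $M$, a spectral sequence with $E_1^{pq} = H_{-q}(M^p)$; since $H$ is pure it kills $\cu_{w=i}$ for $i \ne 0$, hence kills $M^p[-q]$ unless $q = 0$, so $E_1^{pq} = 0$ for $q \ne 0$ and the spectral sequence degenerates at $E_2$, collapsing onto the single row $q=0$, which computes precisely the homology of the complex $(H|_{\hw}(M^p))$, i.e. $H^{H|_{\hw}}(M)$; and by Proposition \ref{pwss} the identification is $\cu$-functorial. (For convergence one needs $H$ to kill $\cu_{w\ge i}$ and $\cu_{w\le -i}$ for $i$ large — but purity gives this with $i = 1$.) This is the step I expect to require the most care: matching the edge map of the degenerate spectral sequence with the natural transformation coming from the construction of $H^{\ca}$, and checking functoriality is genuinely natural rather than just pointwise. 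Fully faithfulness of $\ca \mapsto H^{\ca}$ then follows formally from these two natural isomorphisms, or can be checked directly via the degree-$0$ computation above.

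For part 3, one implication is trivial: a $w$-pure functor kills $\cu_{w\ge 1} \cup \cu_{w\le -1} \supseteq \bigcup_{i\ne 0}\cu_{w=i}$. For the converse, suppose $H$ annihilates $\cu_{w=i}$ for all $i \ne 0$ and that $w$ is bounded; I must show $H$ kills $\cu_{w\ge 1}$ (the case $\cu_{w\le -1}$ being dual, via Proposition \ref{pbw}(\ref{idual})). Let $M \in \cu_{w\ge 1}$. By boundedness $M$ is $w$-bounded, so it admits a bounded weight Postnikov tower; by Proposition \ref{pwt}(\ref{irwcons}) (applied with $n = 1$) one may choose a weight complex $t(M) = (M^i)$ with $M^i = 0$ for $i > -1$, i.e. concentrated in degrees $\le -1$, and with each $M^i \in \cu_{w=0}$. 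Unwinding the Postnikov tower: $M$ is built by finitely many extensions from the shifted terms $M_{-p}[p] = M^p$ with $p \le -1$, i.e. from objects of $\cu_{w=-p} \subseteq \cup_{j\ge 1}\cu_{w=j}$. Since $H$ is homological and annihilates each such $M^p[-\ast]$ — more precisely, an induction up the (finite) tower using the triangles \eqref{etpt} and the vanishing $H_*(M_i) = H_*(M_{-i'}[i']) = 0$ for the relevant indices — we get $H_*(M_{\le i}) = 0$ for all $i$, hence $H_*(M) = 0$ since $M \cong M_{\le i}$ for $i$ large (boundedness of the tower). This finishes the cohomological case, and the homological case is identical; alternatively one simply invokes part 2 together with the observation that such an $H$ has the same restriction to $\hw$ as the zero functor, forcing $H = H^{0} = 0$ on all of $\cu$ when combined with an induction on weights.
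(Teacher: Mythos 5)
Your proposal is correct and follows essentially the same route as the paper: part 1 via Proposition \ref{pwt}(\ref{iwpt3},\ref{iwhefu},\ref{iwcex}), purity of $H^{\ca}$ by choosing $t(M)$ with $M^0=0$, recovery of a pure functor from its restriction to $\hw$ through the degenerating weight spectral sequence of Proposition \ref{pwss}(2), and part 3 by boundedness, where your induction up a bounded weight Postnikov tower is precisely the content of Proposition \ref{pbw}(\ref{igenlm}). The only caveat is that in part 3 you should only claim the vanishing of $H$ itself (degree $0$) along the tower rather than of all $H_*$ — the latter is false in general (e.g.\ $H_1$ need not kill $\cu_{w=1}$) — but the three-term exact sequence in the single relevant degree is all the induction requires, and the concluding "alternatively" via part 2 should be dropped since it presupposes the purity being proved.
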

\begin{proof}
1.  Proposition \ref{pwt}(\ref{iwpt3},\ref{iwhefu},\ref{iwcex}) immediately implies that for any additive $\ca:\hw\to \au$  the functor $H^{\ca}$ is homological and does not depend on the choices of weight complexes. 

2. Firstly, let us prove that any functor of the form  $H^{\ca}$ is pure. For any $M\in \cu_{w\ge 1}\cup \cu_{w\le -1}$ we can choose $t(M)$ with $M^0=0$ according to Proposition \ref{pwt}(\ref{iwcons}); thus $H^{\ca}(M)=0$ in this case.

To finish the proof of the statement it suffices to verify that a $w$-pure functor can be functorially recovered from its restriction to $\hw$. This is immediate from assertion 1 combined with Proposition \ref{pwss}(2).

3. Immediate from our definitions combined with Proposition \ref{pbw}(\ref{igenlm}).
\end{proof}

\begin{rema}\label{rpuresd} 
1. The definition of pure functors is obviously self-dual (cf. Proposition \ref{pbw}(\ref{idual}), i.e., $H$ is pure (co)homological if and only if the functor from $\cu$ into $\au\opp$ obtained from $H$ by means of inversion of arrows is pure cohomological (resp. homological).

Combining this observation with our theorem we obtain that the correspondence $(w,\ca)\mapsto H^{\ca}$ is self-dual as well (cf. Remark \ref{rwcbws}(\ref{irwc52}) below).

We will also use the following notation: for a contravariant additive functor $\ca'$ from $\hw$ into $\au$ we will write $H_{\ca'}$ for the corresponding cohomological pure functor; it clearly sends $M$ as above into the zeroth homology of the complex $\ca'(M^{-j})$.

2. Immediately from the definition of pure functors, a representable functor $\cu(-,M)$ is pure if and only if $M$ belongs to $ (\cu_{w\ge 1}\cup \cu_{w\le -1})\perpp$.

3. The author is using the term "pure" due to the relation of pure functors to Deligne's purity of cohomology. 

To explain it we recall that various categories of Voevodsky motives are endowed with so-called Chow weight structures; we will say more on them in Remarks \ref{rayoub} and \ref{rwchow}(1) below.
 Now, for any $r\in \z$ the $r$th level of the Deligne's weight filtration both of singular and \'etale cohomology of motives clearly kills $\chow[i]$ for all values of $i$ except one (and the remaining value of $i$ is either $r$ or $-r$ 
depending on the choice of the convention for Deligne's weights).\footnote{Certainly, singular (co)homology (of motives) is only defined if the base field $k$ is a subfield of complex numbers, 
 whereas the Deligne's weight filtration on \'etale (co)homology can be defined (at least) if $k$ is a finitely generated field. For both of these cases the comparison of the corresponding weight factors  with the ones computed in terms of $w_{\chow}$ is carried over in  
Theorem 3.5.4(2) of \cite{bsoscwhn} (cf. also Remark 2.4.3 of \cite{bws}). Note also that 
 in \cite[\S3.4,3.6]{brelmot} certain "relative perverse" versions of these weight calculations were discussed.}\  Thus  
(the corresponding shifts of) Deligne's pure factors of (singular and \'etale) cohomology are pure with respect to $w_{\chow}$.

4. Note  however that in the context of equivariant stable homotopy categories (see Theorem \ref{tshg} below) pure (co)homology theories are usually called {\it ordinary} or {\it classical} ones. Functors of this sort were  essentially introduced by Bredon (see condition (4) in \S I.2 of \cite{bred} and Remark \ref{rshg}(\ref{iruniv}) below), and the spectral sequence argument used for the proof of  Theorem \ref{tpure}(2) is rather similar to that applied in \S IV.5 of ibid.

5. Part 1 of our theorem was applied in \cite{bontabu} (see Proposition 3.5(ii) of ibid.).\footnote{Since \cite{bontabu} was written earlier than the current paper, it actually referred to other texts of the author.  Note however that all the weight structure pre-requisites for ibid. are gathered and described more accurately in the current paper.}
\end{rema}

We will also need the following easy statement in succeeding papers.

\begin{lem}\label{lsubc}
Assume that $\au'$ is a strict abelian subcategory of $\au$, i.e., $\au'$ is its strictly full subcategory that contains the $\au$-kernel and the $\au$-cokernel of any morphism in $\au'$.

Then  an additive functor $\ca:\hw\opp\to \au$  factors through $\au'$ if and only if the values of the pure functor $H_{\ca}$ (see Remark \ref{rpuresd}(1)) belong to $\au'$. \end{lem}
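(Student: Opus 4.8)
The plan is to prove both implications directly from the explicit description of the pure cohomological functor $H_{\ca}$ provided by Theorem \ref{tpure}(1) (in the contravariant form recalled in Remark \ref{rpuresd}(1)): for $M\in\obj\cu$ with a chosen weight complex $t(M)=(M^j)$, the object $H_{\ca}(M)$ is the zeroth homology of the complex $\ca(M^{-j})$, i.e. it is computed as $\cok$ and $\ke$ of morphisms in $\au$ that all lie in the image of $\ca$.

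\textbf{The ``only if'' direction.} Suppose $\ca$ factors through $\au'$, say $\ca=\iota\circ\ca_0$ where $\iota:\au'\to\au$ is the inclusion and $\ca_0:\hw\opp\to\au'$. For any $M$ fix a weight complex $(M^j)$. The complex $\ca(M^{-*})$ is then (the image under $\iota$ of) the complex $\ca_0(M^{-*})$ in $\au'$. Since $\au'$ is a strict abelian subcategory of $\au$, the inclusion $\iota$ is exact and in particular commutes with the formation of kernels and cokernels; hence the zeroth homology of $\ca(M^{-*})$ computed in $\au$ coincides with the zeroth homology of $\ca_0(M^{-*})$ computed in $\au'$, and so it lies in $\au'$. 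Thus $H_{\ca}(M)\in\obj\au'$ for every $M$.

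\textbf{The ``if'' direction.} Suppose all values of $H_{\ca}$ lie in $\au'$; we must show $\ca(N)\in\obj\au'$ for every $N\in\obj\hw$. The key observation is that one can recover $\ca(N)$, up to canonical isomorphism, as a value of $H_{\ca}$: namely, take $M=N$ viewed as an object of $\cu$ lying in $\cu_{w=0}$, with the trivial weight complex $t(N)=N$ concentrated in degree $0$ (this is a legitimate choice by Proposition \ref{pwt}(\ref{irwcons}), which also records that the embedding $\hw\to\cuw$ followed by $t$ is isomorphic to the embedding $\hw\to\kw(\hw)$). Then the complex $\ca(N^{-*})$ is concentrated in degree $0$ with term $\ca(N)$, so its zeroth homology is exactly $\ca(N)$; hence $\ca(N)\cong H_{\ca}(N)$, which lies in $\au'$ by hypothesis. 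Since $\au'$ is strictly full in $\au$, it follows that $\ca(N)\in\obj\au'$; and since $\au'$ contains all $\au$-kernels and $\au$-cokernels of morphisms between its objects, $\ca$ in fact lands in $\au'$ on morphisms as well (the image of $\ca(f)$ for $f$ a morphism of $\hw$ is again in $\au'$). This gives the desired factorization $\ca=\iota\circ\ca_0$.

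\textbf{Main obstacle.} There is essentially no deep obstacle here; the statement is ``easy'' as the paper says. The one point requiring a little care is making sure that ``zeroth homology'' is genuinely computed the same way inside $\au'$ and inside $\au$ --- i.e. that a strict abelian subcategory is closed not just under kernels and cokernels but consequently under images and subquotients, so that the homology object of a complex with terms in $\au'$ and differentials in $\au'$ is again in $\au'$. This is immediate from the definition of strict abelian subcategory (closure under $\ke$ and $\cok$ forces closure under $\imm$), but it is the hypothesis that actually gets used, so it is worth spelling out. A secondary routine point is the naturality/well-definedness of the identification $\ca(N)\cong H_{\ca}(N)$, which is exactly the last clause of Proposition \ref{pwt}(\ref{irwcons}) together with the naturality of $\ca\mapsto H^{\ca}$ from Theorem \ref{tpure}(1).
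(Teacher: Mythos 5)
Your proof is correct, and it is exactly the direct verification the paper has in mind: the paper's own "proof" of this lemma is just the word "Obvious", and your argument (homology of a complex with terms and differentials in $\au'$ stays in $\au'$ by closure under $\au$-kernels and $\au$-cokernels; conversely recover $\ca(N)\cong H_{\ca}(N)$ for $N\in \cu_{w=0}$ via the degree-zero weight complex from Proposition \ref{pwt}(\ref{irwcons}) and use strict fullness) is the intended routine check, with the right subtle point (closure under homology objects) correctly identified.
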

\begin{proof}
 Obvious.
\end{proof}

\subsection{Detecting weights via pure functors}\label{sdet}
Let us prove that pure functors can be used to  detect weights (cf. Remark \ref{rwcons}(1)); these results are crucial for \cite{bontabu}.

It will be convenient for us to use the following definitions.

\begin{defi}\label{dbh}
Let $\hu$ be a (not necessarily additive) subcategory of an additive category $\bu$.

1. We  call the full additive subcategory of $\bu$ whose objects are the retracts of  all  (small) coproducts of objects of $\hu$ in $\bu$ (we take all coproducts that exist in $\bu$) the {\it coproductive hull} of $\hu$ in $\bu$.

2. Let  $\hu'$ be the category  of "formal coproducts" of objects of $\hu$, i.e.,  the objects of 
 $\hu'$ are of the form $\coprod_i P_i$ for (families of) $P_i\in \obj \hu$ and $\hu'(\coprod M_i,\coprod N_j)=\prod_i (\bigoplus_j \hu(M_i,N_j))$. Then we will call $\kar(\hu')$ the {\it formal coproductive hull} of $\hu$.
\end{defi}

\begin{rema}\label{rbh}
Obviously, the coproductive hull of $\hu$ in $\bu$, the category $\hu'$ mentioned in our definition, and the formal coproductive hull of $\hu$ are additive categories. Moreover, there exist fully faithful functors between the retraction-closure of $\hu$ in $\bu$  and both aforementioned "coproductive hulls" of $\hu$. 
\end{rema}

\begin{pr}\label{pdetect}
1. Let $\ca:\hw\to \au$ be an additive functor (where $\au$ is an abelian category; cf. Theorem \ref{tpure}) and assume that the following conditions are fulfilled.

(i) the image of $\ca$ consists of  $\au$-projective objects only;

(ii)  if an $\hw$-morphism $h$ is not split surjective (i.e., it is not a retraction) then $\ca(h)$ is not split surjective as well.

Then for any $M\in \obj \cu$, $n\in \z$, and $H=H^\ca$ we have the following:  $M$ is $w$-bounded below and $H_i(M)=0$ (see \S\ref{snotata}) for all $i< n$ if and only if  $M\in \cu_{w\ge n}$.

2. Assume that $\hw$ is an $R$-linear category, where $R$ is a commutative unital ring (certainly, one may take $R=\z$ here),  $\hu$ is a full small subcategory of $\hw$, $\au$ is the category of $R$-linear functors from $\hu\opp$ into the category of $R$-modules (i.e., the objects and morphisms of $\au$ respect the $R$-module structure on morphism groups), $\ca$ is the corresponding Yoneda-type functor, i.e., $\ca(M)$ sends $N\in\obj \hu$ into the $R$-module $\bu(N,M)$ for any $M\in \obj \hw$; suppose in addition that the image of $\ca$ is contained in the  coproductive hull of the image of its restriction to $\hu$ in $\au$. 

Then $\ca$  fulfils condition (i) of assertion 1.

3. Assume that $\ca$ is full and conservative. Then it fulfils condition (ii) of assertion 1.

\end{pr}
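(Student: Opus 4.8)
\textbf{Proof plan for Proposition \ref{pdetect}.}

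The plan is to prove the three assertions in the order 2, 3, 1, since the first one is the substantial statement and the other two are (relatively) routine verifications of its hypotheses in concrete situations. For assertion 2, I would argue as follows. The functor $\ca$ sends $N\in\obj\hu$ to the representable $R$-module functor $\bu(-,N)|_{\hu\opp}$, which is a projective object of $\au$ by the Yoneda lemma (for any $F\in\obj\au$, $\au(\bu(-,N)|_{\hu},F)\cong F(N)$, and $F\mapsto F(N)$ is exact since (co)kernels in a functor category are computed pointwise). Since projective objects are closed under coproducts and retracts, every object of the coproductive hull of $\ca(\hu)$ is projective; by hypothesis this coproductive hull contains the image of $\ca$, so condition (i) holds. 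For assertion 3, suppose $\ca$ is full and conservative and $\ca(h)$ is split surjective for some $h\in\hw(X,Y)$; then there is $s'\in\au(\ca(Y),\ca(X))$ with $\ca(h)\circ s'=\id_{\ca(Y)}$, fullness gives $s\in\hw(Y,X)$ with $\ca(s)=s'$, so $\ca(h\circ s)=\id_{\ca(Y)}$, and conservativity forces $h\circ s$ to be an automorphism of $Y$, whence $h$ is split surjective. (This is essentially the dual of Lemma \ref{lsplit}(1), and I would remark as much.)

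The heart of the matter is assertion 1. The "if" direction is easy: if $M\in\cu_{w\ge n}$ then by Proposition \ref{pwt}(\ref{irwcons}) we may choose a weight complex $t(M)=(M^j)$ with $M^j=0$ for $j>-n$, so the complex $\ca(M^{\bullet})$ is concentrated in degrees $\le -n$ and its homology $H_i(M)=H_{-i}(\ca(M^{-\bullet}))$ vanishes for $i<n$ (and $M$ is automatically $w$-bounded below). For the converse I would proceed by descending induction, or rather by a direct argument on the weight complex: assume $M$ is $w$-bounded below and $H_i(M)=0$ for $i<n$. After a shift I may assume $n=0$. Choose a weight complex $t(M)=(M^j)$ with $M^j=0$ for $j\gg 0$ (possible since $M$ is $w$-bounded below, using Proposition \ref{pwt}(\ref{irwcons})); say $M^j=0$ for $j>N$. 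If $N\le 0$ we are done by Proposition \ref{pwt}(\ref{iwcons}). Otherwise, the assumption $H_i(M)=0$ for $i<0$ means the complex $\ca(M^{\bullet})=(\cdots\to\ca(M^{-1})\xrightarrow{\ca(d^{-1})}\ca(M^{0})\to 0)$ is exact in negative degrees; in particular $\ca(d^{-1})$... wait — I need to be careful about where the homology sits. With $H_i(M)$ the $i$th homology in the homological indexing of $\ca(M^{-\bullet})$, the vanishing for $i<0$ together with $M^j=0$ for $j>N>0$ forces $\ca(M^{\bullet})$ to be exact at $\ca(M^{-j})$ for all $j<0$ and, crucially, at the top: the map $\ca(M^{N-1})\to\ca(M^{N})$ must be split surjective onto $\ca(M^N)$ because $\ca(M^N)$ is projective (condition (i)) and the complex is exact there. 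Then condition (ii) forces $d^{N-1}\in\hw(M^{N-1},M^N)$ to be split surjective; by (the dual of) Lemma \ref{lsplit}(2), applied in the retraction-closed subcategory $\hw\subset\cu$, we get $M^{N-1}\cong M^N\oplus C$ with $d^{N-1}$ the projection. This lets me replace $t(M)$ by a homotopy-equivalent complex with top term in degree $N-1$ (cancelling the contractible summand $M^N\xrightarrow{\id}M^N$), contradicting... no, not contradicting, simply reducing $N$. Iterating, I bring $N$ down to $0$, at which point Proposition \ref{pwt}(\ref{iwcons}) gives $M\in\cu_{w\ge 0}$. (One must check the cancellation is legitimate inside $\kw(\hw)$ and that the homology hypothesis is preserved — it is, since homology is a homotopy invariant by Proposition \ref{pwt}(\ref{iwhefu}).)

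The main obstacle I anticipate is bookkeeping the indices and signs in this "Gaussian elimination on the weight complex" step and making the cancellation argument clean — one wants to say that a bounded-above complex $(M^j)$ over $\hw$ whose image under $\ca$ is exact in degrees above some bound $m$, with $\ca$ satisfying (i) and (ii), is homotopy equivalent to one concentrated in degrees $\le m$. The cleanest route is probably to phrase this as a lemma about complexes over an additive category $\bu$ equipped with an additive functor $\ca:\bu\to\au$ satisfying (i) and (ii): then peel off one contractible two-term complex at a time from the top, using that a split-surjective $\hw$-morphism has a kernel retract in the retraction-closed $\hw\subset\cu$ (Lemma \ref{lsplit}(2) dualized), and use that homotopy equivalence of complexes over $\hw$ is detected/preserved appropriately. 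Given the machinery already set up in Proposition \ref{pwt} and Lemma \ref{lsplit}, once that lemma is isolated the rest is formal. I'd also need the elementary fact that in the complex $\ca(M^{\bullet})$ with $\ca(M^{N})$ the top (projective) term, exactness at the top forces the incoming map to be a split epimorphism — this is just that a surjection onto a projective splits, combined with exactness giving surjectivity there.
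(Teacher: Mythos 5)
Your proposal is correct and follows essentially the same route as the paper: both the "if" direction (via Proposition \ref{pwt}(\ref{irwcons})) and the converse rest on choosing a bounded-above weight complex, using projectivity (i) to turn homology vanishing at the top into split surjectivity of $\ca(d)$, condition (ii) to descend this to $d$ itself, the dual of Lemma \ref{lsplit}(2) to split off the top term, and Proposition \ref{pwt}(\ref{iwcons}) to conclude, with parts 2 and 3 proved exactly as in the paper (Yoneda representables are projective; dual of Lemma \ref{lsplit}(1)). The only difference is organizational: you run an iterative cancellation of top terms, whereas the paper argues once, at the maximal $m\le n$ with $M\in\cu_{w\ge m}$, that $\cok\ca(d^{-m-1})\cong H_m(M)\neq 0$ — the same ingredients in contrapositive form.
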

\begin{proof}
1. The "if" part of the statement is very easy (for any $\ca$); just combine the definition of $H^\ca$ with  Proposition \ref{pwt}(\ref{irwcons}). 

Now we prove the converse application; we argue similarly to the proof of Theorem \ref{twcons}.

So, we choose the maximal integer $m\le n$ such that  $M\in \cu_{w\ge m}$, choose $t(M)=(M^i)$ such that $M^i=0$ for $i>-m$. According to Proposition \ref{pwt}(\ref{iwcons}), we have $t(M)\notin  K(\hw)_{\wstu\ge m+1}$; applying Lemma \ref{lsplit}(2) (in the dual form) we obtain that the boundary morphism $d^{-m-1}:M^{-m-1}\to M^{-m}$ is not split surjective.
 Applying our assumption (ii) on $\ca$ we obtain that the morphism $\ca(d^{-m-1})$ does not split as well. Since $\ca(M^{-m})$ is projective in $\au$, it follows that $\cok\ca(d^{-m-1})\cong  H_{m}(M)\neq 0$. Thus $m\ge n$, and we obtain the result in question.

2. This statement is much easier than its formulation. Objects of $\hu$ obviously become projective in the category $\au$
(see Remark \ref{rabver}(2) below). Since coproducts and retracts of projective objects are projective, all elements of $\ca(\obj \hw)$ are projective in $\au$ as well.

3. This is just a particular case of Lemma \ref{lsplit}(1). 
\end{proof}

\begin{rema}\label{rabver}
1. Part 1 of our proposition is really similar to Theorem \ref{twcons}(1), and it may be called its "homological functor version". Its advantage as a weight detection method is that pure functors are somewhat easier to construct than weight-exact ones (thanks to Theorem \ref{tpure}; cf. also Corollary \ref{cbontabu}(2) below).   In particular, one can easily deduce Theorem \ref{twcons} from Proposition \ref{pdetect} using certain functors similar to the ones that we will now describe.

2. In the current section we will only apply Proposition \ref{pdetect}(2) (in Corollary \ref{cbontabu}(2)) in the simple case where $\hu$ is an essentially wide small subcategory of $\hw$ (see \S\ref{snotata}; consequently, we assume that $\hw$ is essentially small and then we can take $\hu$ to be a skeleton of $\hw$); in this case it follows immediately both from Lemma 5.1.2 of \cite{neebook}  and from  Lemma 8.1 of \cite{vbook}. 

We have formulated the general case of our proposition for the sake of applying it in Theorem \ref{thegcomp}  below. So we give some more detail for this setting.

Choose  an essentially wide small subcategory $\hu_0$ of   the retraction-closure of $\hu$ in $\cu$.   It is easily seen that the category 
 $\au$ is equivalent to the category $\au'$ of $R$-linear functors from $\hu_0\opp$ into $R$-modules. 
 The category $\au'$ is obviously Grothendieck abelian. Moreover,  the  projectives in it  are precisely the objects of the coproductive hull of the image of $\hu_0$ in $\au'$ (see Definition \ref{dbh}(1)) and there are enough of them according to Lemma 8.1 of \cite{vbook}. Note also that  this hull  coincides with the coproductive hull of the image of $\hu_0$ in $\au'$.
\end{rema}

Let us now deduce Propositions 4.6 and 5.2  of \cite{bontabu} from our proposition.

\begin{coro}\label{cbontabu}
1. Let $\ca:\hw\to \au$ be a full additive conservative functor whose target is semi-simple. Then for a $w$-bounded object $M$ of $\cu$ we have $M\in \cu_{w=0}$ if and only if  $H^\ca_i(M)=0$ for all $i\neq 0$.

2. The assumptions of Proposition \ref{pdetect}(1) are fulfilled whenever  $\hu$ is  an essentially wide small subcategory of the category $\hw$, whereas $R$  and $\ca$ are as in part 2 of that proposition.  
\end{coro}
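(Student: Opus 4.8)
The plan is to derive both parts directly from Proposition~\ref{pdetect}: part~2 will be a verification of its hypotheses in the Yoneda situation, and part~1 will combine Proposition~\ref{pdetect}(1) with its evident dual.

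For part~2 I would check conditions (i) and (ii) of Proposition~\ref{pdetect}(1) for the functor $\ca$ of Proposition~\ref{pdetect}(2) under the hypothesis that $\hu$ is essentially wide in $\hw$. Condition~(i) will follow from Proposition~\ref{pdetect}(2) itself, whose only extra hypothesis --- that the image of $\ca$ lie in the coproductive hull (in $\au$) of the image of the restriction of $\ca$ to $\hu$ --- holds automatically here: since $\hu$ is essentially wide, every $\ca(M)$ with $M\in\obj\hw$ is isomorphic to some $\ca(N)$ with $N\in\obj\hu$, so that image is, up to isomorphism, all of the image of $\ca$; one could also invoke the references in Remark~\ref{rabver}(2). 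For condition~(ii), Proposition~\ref{pdetect}(3) reduces matters to showing that $\ca$ is full and conservative, and I would get this by observing that $M\mapsto\hw(-,M)|_{\hu}$ is in fact fully faithful: since the inclusion $\hu\hookrightarrow\hw$ is an equivalence, restriction identifies $\au(\ca(M),\ca(M'))$ with the group of natural transformations $\hw(-,M)\to\hw(-,M')$, which the Yoneda lemma identifies with $\hw(M,M')$. This gives part~2.

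For part~1 I would first note that condition~(i) of Proposition~\ref{pdetect}(1) is automatic for $\ca$ since $\au$ is semisimple, hence all its objects are projective, and that condition~(ii) holds by Proposition~\ref{pdetect}(3) because $\ca$ is full and conservative. Proposition~\ref{pdetect}(1) with $n=0$ then says that a $w$-bounded below object $M$ lies in $\cu_{w\ge 0}$ iff $H^\ca_i(M)=0$ for all $i<0$. Applying the same proposition to $\cu\opp$ with the opposite weight structure (Proposition~\ref{pbw}(\ref{idual})) and the functor $\ca\opp:\hw\opp\to\au\opp$ --- still full and conservative, and with semisimple target --- and using the self-duality of the assignment $\ca\mapsto H^\ca$ recorded in Remark~\ref{rpuresd}(1), I obtain that a $w$-bounded above object $M$ lies in $\cu_{w\le 0}$ iff $H^\ca_i(M)=0$ for all $i>0$. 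Combining the two criteria for a $w$-bounded $M$: if $H^\ca_i(M)=0$ for all $i\ne 0$ then $M\in\cu_{w\ge 0}\cap\cu_{w\le 0}=\cu_{w=0}$, and conversely $M\in\cu_{w=0}$ forces $H^\ca_i(M)=0$ for $i<0$ and for $i>0$ (the latter direction being the easy half of Proposition~\ref{pdetect}(1), read off from the definition of $H^\ca$ and Proposition~\ref{pwt}(\ref{irwcons})).

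The one place needing genuine care is this dualization: I must check that the pure homological functor attached in $\cu\opp$ to $\ca\opp$ coincides, under the identification $(\cu\opp)_{w\opp=0}=\cu_{w=0}$, with $H^\ca$ up to a reversal of the grading, so that the criterion obtained over $\cu\opp$ really translates into the vanishing of $H^\ca_i$ for $i>0$. This is exactly the self-duality of $(w,\ca)\mapsto H^\ca$ from Remark~\ref{rpuresd}(1) (compare also the near-self-duality of weight complexes noted in Remark~\ref{rwcbws}(\ref{irwc52})), so no real work is required; everything else is a formal application of Proposition~\ref{pdetect}.
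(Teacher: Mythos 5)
Your proposal is correct and follows essentially the same route as the paper: part 2 is verified exactly as there (condition (i) via Proposition~\ref{pdetect}(2), whose extra hypothesis is trivial for an essentially wide $\hu$, and condition (ii) via the Yoneda lemma making $\ca$ a full embedding, hence conservative), and part 1 likewise combines the easy implication from Proposition~\ref{pwt}(\ref{irwcons}) with Proposition~\ref{pdetect}(1) — condition (i) being automatic by semisimplicity, condition (ii) by Proposition~\ref{pdetect}(3) — plus the self-duality of the assumptions and of $\ca\mapsto H^\ca$ (Proposition~\ref{pbw}(\ref{idual}), Remark~\ref{rpuresd}(1)) to obtain the other weight inequality. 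Your treatment is merely more explicit about the dualization step than the paper's terse reference to these self-duality statements.
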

\begin{proof}
1. Once again, the "if" part of the statement immediately follows from Proposition \ref{pwt}(\ref{irwcons}).  

To  prove the converse implication we note that our assumptions on $\cu,\ca$, and $M$ are self-dual (see  Proposition \ref{pbw}(\ref{idual}) and Remark \ref{rpuresd}(1)). Thus is suffices to prove that $M\in \cu_{w\le 0}$. According to Proposition \ref{pdetect}(1), for this purpose it remains to verify that the image of $\ca$ consists of projective objects only. The latter is automatic since $\au$ is semi-simple.

2. This is very easy. Proposition \ref{pdetect}(2) implies that all objects in the image of $\ca$ are projective in $\au$. Thus it remains to note that the functor $\ca$ is a full embedding according to the Yoneda lemma; hence it is also conservative.  
\end{proof}

\begin{rema}\label{rdetect}
1. Immediately from Lemma \ref{lsplit}(1) (that we have just applied) condition (ii) of  Proposition \ref{pdetect} is  fulfilled both for $\ca$ and for  the opposite functor $\ca^{op}:\hw^{op}\to \au^{op}$ whenever $\ca$ is a full 
conservative functor. In particular, it suffices to assume that $\ca$ is a full embedding.

Hence it may be useful to demand (in addition to assumption (i) of the proposition) that the image of $\ca$ consists of injective objects only (cf. Corollary \ref{cbontabu}(1)). 

2. The objects in the essential image of the functor provided by Corollary \ref{cbontabu}(2)  may be called {\it purely $R$-representable homology}. Since they are usually not injective in  $\psvr(\bu)$, a dual construction may be useful for checking whether $M\in \cu_{w\le -m}$.

3. The boundedness assumption in Proposition \ref{pdetect} cannot be dropped (unless certain additional restrictions are imposed). 

Indeed, take $\bu$ to be the category of free finitely generated $\z/4\z$-modules, $\cu=K(\bu)$, and $w=\wstu$. Then the complex $M=\dots \stackrel{\times 2}{\to}\z/4\z\stackrel{\times 2}{\to}\z/4\z\stackrel{\times 2}{\to}\z/4\z\stackrel{\times 2}{\to}\dots$ does not belong to $\cu_{\wstu\ge i}$ for any $i\in \z$,  whereas its purely $\z$-representable homology (as well as the $\z/4\z$-representable one) is obviously zero in all degrees.

4. Condition (ii)  of Proposition \ref{pdetect}  is certainly necessary. Indeed, if $h\in \mo (\hw)$ does not split whereas $\ca(h)$ does, then one can easily check that $\co(h)\in \cu_{w\ge 0}\setminus \cu_{w\ge 1}$ and $H_i(\co(h))=0$ for $i\neq 1$. 
\end{rema}

\subsection{On smashing weight structures and pure functors respecting coproducts}\label{smash}

It is currently well known (in particular, from \cite{neebook}) that the existence of all  (small) coproducts  
  is a very reasonable assumption on a ("big") triangulated category. Now we relate it to weight structures and pure functors. 

\begin{defi}\label{djsmash}
1. We will say that a triangulated category $\cu$ is {\it smashing} whenever it is closed with respect to 
 coproducts.

2. We  say that a weight structure $w$ on $\cu$ is {\it  smashing} if  the class $\cu_{w\ge 0}$ is closed with respect to  $\cu$-coproducts  (cf. Proposition \ref{pbw}(\ref{icoprod})). 

3. We will say  that a full strict triangulated subcategory $\du\subset \cu$ is  {\it localizing}  whenever it is closed with respect to $\cu$-coproducts. Respectively, we  call the smallest localizing  subcategory of $\cu$ that contains a given class $\cp\subset \obj \cu$  the {\it  localizing subcategory of $\cu$ generated by $\cp$}.  \end{defi}


Let us prove several simple properties of smashing weight structures.

\begin{pr}\label{ppcoprws}
Let $w$ be a  smashing weight structure (on a smashing triangulated category $\cu$), and  $i,j\in \z$.  Then the following statements are valid. 

\begin{enumerate}
\item\label{icopr1} The classes $\cu_{w\le j}$, $\cu_{w\ge i}$, and $\cu_{[i,j]}$ are closed with respect to (small) $\cu$-coproducts. 

\item\label{icoprhw} In particular, the category $\hw$ is closed with respect to $\cu$-coproducts, and the embedding $\hw\to \cu$ preserves coproducts. 

\item\label{icopr2} Coproducts of  weight decompositions are weight decompositions.

\item\label{icopr3}  Coproducts of weight Postnikov towers are weight Postnikov towers.

\item\label{icopr4} The categories $\cuw$ and $\kw(\hw)$ are closed with respect to coproducts, and the functor $t$ preserves coproducts.

\item\label{icopr5} Assume that $\au$ is an  AB4 abelian category. Then  pure functors $\cu\to \au$ 
 that respect coproducts are exactly the functors of the form  $H^\ca$ (see Theorem \ref{tpure}), where $\ca:\hw\to \au$ is an additive functor preserving coproducts. Moreover, this correspondence is an equivalence of  (possibly, big) 
 categories. 

\item\label{icopr5p} Assume that $\au'$ is an  AB4* abelian category. Then pure cohomological  
functors  from $\cu$ into $\au'$ that convert coproducts into products are exactly those of  the form $H_{\ca'}$ (see Remark \ref{rpuresd}(1)),  where $\ca':\hw^{op}\to \au'$ is an additive functor that sends $\hw$-coproducts into products.

\item\label{icopr5b} Assume that $\cu$ satisfies the following {\it Brown representability} property: any cohomological functor from $\cu$ into $\ab$ that converts $\cu$-coproducts into products of groups is representable in $\cu$.

Then the pure functor $H_{\ca'}$ (as above) is representable if and only if $\ca'$ is a contravariant additive functor from $\hw$ into $\ab$ that  converts $\hw$-coproducts into products of groups. Thus the corresponding Yoneda  functor embeds the category of functors satisfying 
 this condition into $\cu$.

\item\label{icopr7} Let $\du$ be the localizing subcategory of $\cu$ generated by a class of objects $\cp$, and assume that for any 
 $P\in \cp$ a choice of (the terms of)   its weight complex $t(P)=(P^k)$  is fixed. Then for any $M\in \obj \du$ 
its weight complex $t(M)$ belongs to 
 the localizing subcategory $K$ of  $K(\hw)$ (see  assertion \ref{icoprhw} and Remark \ref{rwc}(\ref{irwco})) generated by all of the $P^k$. In particular, if $\cp\subset \cu_{w=0}$ then $t(M)$ belongs to 
 the localizing subcategory of  $K(\hw)$  generated by $\cp$.

Moreover, any element of $\cu_{w=0}\cap \obj \du$ 
 belongs to the coproductive hull of $\{P^k\}$ in $\hw$ (see Definition \ref{dbh}(1)).

\item\label{icopr7p} 
For  $\cp$ and $\du$  as in the previous assertion assume that   for any  $P\in \cp$ and any $k\in\z$ some choices of  $w_{\le k}P$ and of $w_{\ge k}P$   are fixed.   Then  for any $D\in\obj \du$ 
there exists a choice of $w_{\le 0}D$ (resp. of $w_{\ge 0}D$)  that belongs to the smallest  class $D_1$ (resp. $D_2$)  of  objects of $\cu$ that is closed with respect to extensions and coproducts and contains ($0$ and) the corresponding objects $(w_{\le k}P)[-k]$ (resp.  $(w_{\ge k}P)[-k]$) for all $P\in \cp$ and $k\in\z$. Moreover,  $\cu_{w\le 0}\cap \obj \du\subset D_1$ and $\cu_{w\ge 0}\cap \obj \du\subset D_2$. 

\item\label{icoprhcl} For a sequence of objects $Y_i$  of $\cu$ for $i\ge 0$ and maps $\phi_i:Y_{i}\to Y_{i+1}$  
 we consider  $C=\coprod Y_i$ and the morphism $a:\oplus \id_{Y_i}\bigoplus \oplus (-\phi_i): C\to C$.

Then a cone $Y$ of $a$ (that is a {\it countable homotopy colimit} of $Y_i$ as defined in  \cite{bokne};  respectively, we will write  $Y=\hcl Y_i=\hcl_{i\ge 0} Y_i$) is right (resp. left) weight-degenerate whenever for all $i\ge 0$ we have  $Y_i\in\cu_{w\le -i}$ (resp.  $Y_i\in\cu_{w\ge i}$). 
\end{enumerate}
\end{pr}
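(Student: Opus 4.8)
The object $Y=\co(a)$ is the same in both cases, and by Definition \ref{dwso}(\ref{ilrd}) it is right (resp. left) weight-degenerate exactly when $Y\in\cu_{w\le m}$ (resp. $Y\in\cu_{w\ge m}$) for every $m\in\z$. So the plan is: fix $m$, reduce the desired membership to the vanishing of a single $\cu$-Hom group by means of Proposition \ref{pbw}(\ref{iort}) — which gives $\cu_{w\le m}={}^{\perp}\cu_{w\ge m+1}$ and $\cu_{w\ge m}=(\cu_{w\le m-1})^{\perp}$ — and then compute that group from the long exact sequence obtained by applying a representable functor to the distinguished triangle $C\to C\to Y\to C[1]$ whose first arrow is $a=\oplus\id_{Y_i}\bigoplus\oplus(-\phi_i)$.

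For the right-degenerate case ($Y_i\in\cu_{w\le -i}$ for all $i$) I would fix $Z\in\cu_{w\ge m+1}$ and apply $\cu(-,Z)$. Here $\cu(C,Z)=\prod_{i\ge0}\cu(Y_i,Z)$ and $\cu(C[1],Z)=\prod_{i\ge0}\cu(Y_i[1],Z)$, and the orthogonality and semi-invariance axioms of $w$ force $\cu(Y_i,Z)=0$ for $i\ge -m$ and $\cu(Y_i[1],Z)=0$ for $i\ge 1-m$; so in each of these two products only finitely many coordinates are non-zero. Reading off $a$ as a ``matrix'' identifies the two induced endomorphisms as $(g_i)_i\mapsto(g_i-g_{i+1}\circ\phi_i)_i$ and $(g_i)_i\mapsto(g_i-g_{i+1}\circ\phi_i[1])_i$. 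The eventual vanishing of the coordinates makes the first one injective — solve $g_i=g_{i+1}\circ\phi_i$ by downward recursion starting from an index past which everything vanishes, the recursion terminating at $i=0$ — and the second one surjective, since the telescoping expression $g_i=\sum_{j\ge i}h_j\circ\phi_{j-1}[1]\circ\cdots\circ\phi_i[1]$ is a finite sum solving $g_i-g_{i+1}\circ\phi_i[1]=h_i$. Plugging $\ke(a^{*})=0$ and $\cok(a[1]^{*})=0$ into the long exact sequence yields $\cu(Y,Z)=0$, hence $Y\in\cu_{w\le m}$; as $m$ was arbitrary, $Y$ is right weight-degenerate.

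The left-degenerate case ($Y_i\in\cu_{w\ge i}$) is parallel, applying $\cu(Z,-)$ for $Z\in\cu_{w\le m-1}$, with one new point — the only place the smashing hypothesis on $w$ is used: $\cu(Z,-)$ need not commute with coproducts, so before manipulating indices I would split off the tail, writing $C\cong\bigl(\coprod_{i<m}Y_i\bigr)\bigoplus\bigl(\coprod_{i\ge m}Y_i\bigr)$ with $\coprod_{i\ge m}Y_i\in\cu_{w\ge m}$ (as $\cu_{w\ge 0}$ is closed under coproducts by Definition \ref{djsmash}(2)), hence orthogonal to $Z$; this gives $\cu(Z,C)\cong\bigoplus_{i<m}\cu(Z,Y_i)$, and similarly $\cu(Z,C[1])\cong\bigoplus_{i<m-1}\cu(Z,Y_i[1])$. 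After this reduction the same injectivity/surjectivity computation for $a_{*}$ and $a[1]_{*}$ gives $\cu(Z,Y)=0$. (One could also shortcut both cases via the standard fact that a countable homotopy colimit is unchanged under passing to a cofinal subsequence: then $Y\cong\hcl_{i\ge N}Y_i=\co\bigl(\coprod_{i\ge N}Y_i\to\coprod_{i\ge N}Y_i\bigr)$ for every $N$, and this cone is an extension of $\bigl(\coprod_{i\ge N}Y_i\bigr)[1]$ by $\coprod_{i\ge N}Y_i$, which lie in $\cu_{w\le 1-N}$ — by Proposition \ref{pbw}(\ref{icoprod}) — in the first case and, $w$ being smashing, in $\cu_{w\ge N}$ in the second; by extension-closedness, Proposition \ref{pbw}(\ref{iext}), $Y\in\cu_{w\le 1-N}$ resp. $Y\in\cu_{w\ge N}$, and letting $N\to\infty$ concludes.)

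The genuine work I anticipate is the bookkeeping in the middle step: verifying that the induced endomorphisms of the truncated Hom-(co)products are exactly as claimed and that eventual vanishing of the coordinates really yields injectivity on the one side and surjectivity on the other; and, in the left-degenerate case, being disciplined about performing the coproduct-splitting — the one use of the smashing property — before any index manipulation, since $\cu(Z,-)$ does not in general commute with coproducts.
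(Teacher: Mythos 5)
Your proposal addresses only assertion (\ref{icoprhcl}) of the proposition. The statement has eleven parts, and you give no argument at all for assertions (\ref{icopr1})--(\ref{icopr7p}): in particular the closure and coproduct statements (\ref{icopr1})--(\ref{icopr4}), the classification of coproduct-preserving pure functors in (\ref{icopr5})--(\ref{icopr5b}) (which in the paper rests on Theorem \ref{tpure}(2) together with the coproduct-compatibility of $t$, and, for (\ref{icopr5b}), on Brown representability), and the assertions (\ref{icopr7},\ref{icopr7p}) about objects of a localizing subcategory generated by $\cp$, which are the genuinely non-formal parts: the paper proves them by showing that the class $W$ of objects admitting weight truncations in $D_1,D_2$ and weight complexes in $K$ is closed under shifts, coproducts and extensions (using Proposition \ref{pbw}(\ref{iwdext}) and Proposition \ref{pwt}(\ref{iwcex})), hence contains $\obj\du$, plus a retraction-closedness input for the "moreover" part. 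So, as a proof of the stated proposition, this is a substantial gap.

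For the one part you do treat, your argument is correct. Your main route (apply $\cu(-,Z)$, resp. $\cu(Z,-)$ after splitting off the tail $\coprod_{i\ge m}Y_i$ using the smashing hypothesis, and check injectivity/surjectivity of the induced "unipotent" endomorphisms by the telescoping recursion) is a legitimate hands-on alternative to the paper's proof; its only cost is the bookkeeping you acknowledge, and it avoids citing any property of homotopy colimits beyond the defining triangle. Your parenthetical shortcut is essentially the paper's actual proof: the paper invokes Lemma 1.7.1 of \cite{neebook} ($\hcl_{i\ge 0}Y_i\cong\hcl_{i\ge 0}Y_{i+j}$) and then concludes from the closure of $\cu_{w\le -j}$ (resp., via smashingness, of $\cu_{w\ge j}$) under coproducts and extensions that $Y\in\cu_{w\le 1-j}$ (resp. $Y\in\cu_{w\ge j}$) for all $j$, exactly as in your last sentence.
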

\begin{proof}
\begin{enumerate}
\item The class $\cu_{w\le j}$ is closed with respect to $\cu$-coproducts according to Proposition \ref{pbw}(\ref{icoprod}). Next,  $\cu_{w\ge i}$ is closed with respect to $\cu$-coproducts immediately from Definition \ref{djsmash}(1). It clearly follows that $\cu_{[i,j]}$ possesses this property as well.

\item Immediate from the previous assertion.

\item If the triangles $L_i\to M_i\to R_i\to L_i[1]$ are  weight decompositions of certain $M_i\in \obj \cu$ then the triangle $$\coprod L_i\to \coprod M_i\to \coprod R_i\to \coprod L_i[1]$$ is distinguished according to Remark 1.2.2  of \cite{neebook}. Hence this triangle is a weight decomposition of $\coprod M_i$ according to assertion \ref{icopr1}.

\item Straightforward from 
assertions \ref{icopr1} and \ref{icopr2}.

\item Immediate from assertions  \ref{icopr1} and \ref{icopr3}.

\item Recall that pure functors are exactly those of the type $H^\ca$, and the correspondence $\ca\to H^{\ca}$ is functorial; see   Theorem \ref{tpure}(2). Next,   if $H$ respects coproducts then its restriction to $\hw$ also does according to  assertion  \ref{icoprhw}. Conversely, if $\ca$ preserves coproducts then $H^\ca$ also does according to assertion \ref{icopr4}.

 \item Similarly to assertion \ref{icopr5}, 
 $\ca'$ sends coproducts into products if $H_{\ca'}$ does by  
  assertion  \ref{icoprhw}. Conversely, if $\ca'$  sends coproducts into products then $H_{\ca'}$ also does according to assertion \ref{icopr4}.

\item This is an obvious combination of assertions \ref{icopr5p} and \ref{icoprhw} (cf. also 
 assertion \ref{icopr5}).

\ref{icopr7},\ref{icopr7p}.
Consider the class  $W$ of those $D\in \obj \cu$  that satisfy the following conditions: $t(D)$ (considered as an object of $K(\hw)$) belongs to 
 $K$,  and  for any $m\in \z$ there exist a choice of $(w_{\le m}D)[-m]$ that belongs to $D_1$  and a choice of $(w_{\ge m}D)[-m]$ that belongs to $D_2$. 

Obviously, $W$ is closed with respect to shifts ($[1]$ and $[-1]$). Moreover, $W$ is closed with respect to coproducts according to assertion  \ref{icopr2} and \ref{icopr4}. Furthermore, Proposition \ref{pbw}(\ref{iwdext})  along with Proposition \ref{pwt}(\ref{iwcex}) imply that $W$ is extension-closed. Thus $W$ is the class of objects of a localizing subcategory of $\cu$. Since $W$ contains $\cp$, we obtain that it  contains $\obj \du$ as well.

 Hence for any $M\in \obj \du$ there exists a choice of $t(M)$ 
 that belongs to $K$; note also that in the case $\cp\subset \cu_{w=0}$ one can take $\{P_k\}=\cp$. Next, any object of $K$ is clearly $K(\hw)$-isomorphic to a complex whose terms are coproducts of $P^k$. Thus if $M$ also belongs to $\cu_{w=0}$ then 
  $M$ belongs to the coproductive hull of $\{P^k\}$ in $\hw$  by Proposition \ref{pwt}(\ref{iwch}) applied to the object $t(M) $ of $K(\hw)$ (here we take the stupid weight structure on $K(\hw)$). This finishes the proof of assertion \ref{icopr7}.  

Lastly, if 
 $M\in \obj \du$ belongs to $ \obj \du \cap \cu_{w\le 0}$ (resp. to $ \obj \du \cap \cu_{w\ge 0}$) then the existence of $w_{\le 0}M$ that belongs to $D_1$ (resp. of $w_{\ge 0}M$ that belongs to $D_2$) implies that $M$ belongs to the retraction-closure of $D_1$ (resp. of $D_2$) according to  Proposition \ref{pbw}(\ref{iwdmod}). 
 Now, $D_1$ and $D_2$ are retraction-closed in $\cu$ according to Corollary 2.1.3(2) of \cite{bsnew}; this concludes the proof of assertion \ref{icopr7p}.  

\ref{icoprhcl}. Recall that  $Y=\hcl_{i\ge 0} Y_i\cong \hcl_{i\ge 0} Y_{i+j}$ for any integer $j\ge 0$ according to Lemma 1.7.1 of \cite{neebook}. Since the classes $\cu_{w\le -j}$ and $\cu_{w\ge j}$ are closed with respect to extensions and coproducts (see assertion \ref{icopr1} and Proposition \ref{pbw}(\ref{iext})), we obtain that  $Y$ belongs to $\cu_{w\le 1-j}$ (resp.  $Y_i\in\cu_{w\ge j}$) whenever $Y_i\in\cu_{w\le -i}$ (resp.  $Y_i\in\cu_{w\ge i}$) for all $i\ge 0$, i.e., we obtain the result in question. 
\end{enumerate}
\end{proof}

\begin{rema}\label{ral}
1. In all the parts of our proposition  one can replace arbitrary small coproducts by coproducts of less than $\al$ objects, 
 where $\al$ is any regular infinite cardinal; cf. Proposition \ref{pral} below.

Note however that the corresponding version of 
 Proposition \ref{ppcoprws}(\ref{icopr5b}) appears to be vacuous since the corresponding 
 modification  of the Brown representability condition cannot be fulfilled for a non-zero category $\cu$.

2. 
On the other hand the Brown representability property 
is known to hold for several important classes of triangulated categories (thanks to the foundational results of A. Neeman and others). In particular, it suffices to assume that either $\cu$ or $\cu\opp$ is  generated by a set of {\it compact} objects  (see Definition \ref{dcomp2} below) as its own localizing subcategory.	

3. Some of these statements can be generalized to so-called {torsion theories}; cf. Propositions 2.4(5) and  3.2(2,4) of \cite{bvt}.
\end{rema}

Now let $\al$ be an infinite {\it regular} cardinal number, i.e., $\al$ cannot be presented as a sum of less than $\al$ cardinals that are less than $\al$.

\begin{pr}\label{pral}
Assume that $\cu$ closed with respect to $\cu$-coproducts of less than $\al$ objects  and $\cu_{w\le 0}$ is closed with respect to $\cu$-coproducts of this cardinality.

Then the following statements are valid.

1.  The category $\hw$ is closed with respect to $\cu$-coproducts of less than $\al$ objects,  and the embedding $\hw\to \cu$ preserves these coproducts.

2.  The categories $\cuw$ and $\kw(\hw)$ are closed with respect to coproducts of less than $\al$ objects, and the functor $t$ preserves   coproducts of this sort.

3. Assume that $\cu$ equals its own minimal strict full triangulated subcategory that is closed  with respect to coproducts of less than $\al$ objects and contains a class of objects $\cp\subset \obj \cu$, and assume that for any 
 $P\in \cp$ a choice of (the terms of)   its weight complex $t(P)=(P^k)$ along with $w_{\le k}P$ and  $w_{\ge k}P$ for all $k\in \z$ are fixed. 

Then any element of $\cu_{w=0}$   is a retract of a coproduct  of a family  of $P^k$ of  cardinality less than $\al$. Moreover,  for any object $M$ of $\cu$ there exists a choice of $w_{\le 0}M$ (resp. of $w_{\ge 0}M$)  that belongs to the smallest  class  of  objects of $\cu$ that is closed with respect to extensions and coproducts of  cardinality less than $\al$, and contains ($0$ and) the corresponding objects $(w_{\le k}P)[-k]$ (resp.  $(w_{\ge k}P)[-k]$) for all $P\in \cp$ and $k\in\z$.
	\end{pr}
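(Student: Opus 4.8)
The plan is to transcribe the proof of Proposition~\ref{ppcoprws}, replacing "arbitrary small coproducts" by "coproducts of fewer than $\al$ objects" throughout, and invoking the regularity of $\al$ at the single point where a coproduct of coproducts has to be re-expressed as one coproduct. For assertion~1 I would note that $\cu_{w\le 0}$ is closed under every coproduct that exists in $\cu$ by Proposition~\ref{pbw}(\ref{icoprod}), while $\cu_{w\ge 0}$ is closed under $<\al$-coproducts by assumption; hence so is $\cu_{w=0}=\cu_{w\ge 0}\cap\cu_{w\le 0}$, and since $\hw$ is full in $\cu$ the $\cu$-coproduct of fewer than $\al$ objects of $\hw$ also serves as their coproduct inside $\hw$.

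For assertion~2 the key step is that a coproduct of fewer than $\al$ weight decompositions is again a weight decomposition: this follows from Remark~1.2.2 of \cite{neebook} (a coproduct of distinguished triangles, when it exists, is distinguished --- the proof applies to any indexing set for which the coproduct exists) together with assertion~1 and Proposition~\ref{pbw}(\ref{icoprod}). Consequently a $<\al$-coproduct of weight Postnikov towers, formed termwise from the triangles~(\ref{etpt}), is a weight Postnikov tower, so $\cuw$ is closed under such coproducts; and $\kw(\hw)$ is closed under them because $K(\hw)$ is (termwise coproducts, using assertion~1) and the full functor $K(\hw)\to\kw(\hw)$ of Proposition~\ref{pwt}(\ref{iwhecat}) carries coproducts to coproducts. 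Since $t$ applied to a coproduct of towers returns the termwise coproduct complex, $t$ preserves these coproducts.

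For assertion~3 I would imitate the proof of Proposition~\ref{ppcoprws}(\ref{icopr7},\ref{icopr7p}). Let $K$ be the smallest retraction-closed triangulated subcategory of $K(\hw)$ that is closed under $<\al$-coproducts and contains the complexes $(P^k)$, and let $D_1,D_2$ be the classes named in the statement. Put $W$ equal to the class of $D\in\obj\cu$ such that $t(D)$ lies in $K$ and, for every $m\in\z$, there are choices of $(w_{\le m}D)[-m]$ in $D_1$ and of $(w_{\ge m}D)[-m]$ in $D_2$. Using the coproduct facts just established one sees that $W$ is closed under $[\pm 1]$ and under $<\al$-coproducts, and Proposition~\ref{pbw}(\ref{iwdext}) together with Proposition~\ref{pwt}(\ref{iwcex}) show it is extension-closed; as $W\supset\cp$, it is the object class of a strict full triangulated subcategory of $\cu$ closed under $<\al$-coproducts and containing $\cp$, hence all of $\cu$ by hypothesis. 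This already yields the "moreover" clause. For the first claim, every object of $K$ is $K(\hw)$-isomorphic to a retract of a complex whose terms are $<\al$-coproducts of the $P^k$ --- here regularity of $\al$ is what makes the complexes of this form (up to retracts and $K(\hw)$-isomorphism) a triangulated subcategory closed under $<\al$-coproducts --- so for $M\in\cu_{w=0}$, applying Proposition~\ref{pwt}(\ref{iwch}) to the object $t(M)$ of $K(\hw)$ (with its stupid weight structure, for which $t(M)\cong M$ is concentrated in degree~$0$) shows that $M$ is a retract of the degree-zero term of such a complex, i.e.\ of a coproduct of fewer than $\al$ of the $P^k$.

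The routine-but-genuine point to watch is the bookkeeping of cardinalities: at each step one must check that the coproducts produced still have fewer than $\al$ summands, and in particular that a coproduct of fewer than $\al$ complexes each of whose terms is a coproduct of fewer than $\al$ copies of the $P^k$ again has terms of this form --- which is exactly where regularity of $\al$ is used. Aside from this and the standard observation that Neeman's coproduct-of-triangles lemma applies to $<\al$-indexed families, the argument is a verbatim copy of the smashing case.
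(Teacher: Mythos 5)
Your proposal is correct and follows essentially the same route as the paper, whose proof of Proposition~\ref{pral} consists precisely of the instruction to repeat the argument for Proposition~\ref{ppcoprws}(\ref{icoprhw},\ref{icopr4},\ref{icopr7},\ref{icopr7p}) with all small coproducts replaced by coproducts of fewer than $\al$ objects. Your only additions — spelling out where the regularity of $\al$ enters the cardinality bookkeeping, and reading the hypothesis as closure of $\cu_{w\ge 0}$ under $<\al$-coproducts (the condition on $\cu_{w\le 0}$ being automatic by Proposition~\ref{pbw}(\ref{icoprod})) — are exactly the details the paper leaves implicit.
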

\begin{proof}
The proofs can be obtained from that of Proposition \ref{ppcoprws}(\ref{icoprhw}, \ref{icopr4}, \ref{icopr7},\ref{icopr7p}) simply by replacing arbitrary small coproducts by coproducts of  cardinality less than $\al$ in all occurrences.
\end{proof}

\begin{rema}\label{rpralz}
1. Certainly, for any $\cu$ and $w$ one can take $\al=\aleph_0$. However, parts 1 and 2 of our proposition do not say anything new in this case. On the other hand, part 3 gives a non-trivial statement; note that in this case $\cu$ is strongly generated by $\cp$ (see \S\ref{snotata}).

2. 
 One can also easily prove that any element $N$ of $\cu_{w=0}$ is a retract of  the coproduct  of a finite family  of the corresponding $P^k$ whenever  $\cu$ is densely  generated by $\cp$; see \S\ref{snotata} and Proposition \ref{pwt}(\ref{iwch}).   

 Let us also make the following observation: if $P\in \cp$ is a $w$-bounded object then we can take $P^k=0$ for almost all values of $k$; see Proposition \ref{pwt}(\ref{irwcons}).
\end{rema}

\section{On "explicit" weight structures and pure functors} \label{sexamples}

In this section we recall two earlier statements on the construction of weight structures "with a given heart", prove a new result of this sort, and describe motivic examples to these assertions.

In \S\ref{sconstrneg} we recall that any {\it connective} densely generating subcategory $\hu$ of $\cu$ gives a weight structure on  it.  Combining this statement with Theorem \ref{twcons} we obtain a conservativity result that does not mention weight structures.  Moreover, we discuss Chow weight structures on categories of Voevodsky motives,  and demonstrate that Theorem \ref{twcons}  allows to deduce the conservativity of  
  the \'etale realization on the category $\dmgq$ of $\q$-linear geometric motives  from Theorem II of \cite{ayoubcon}.

In \S\ref{spcgw} we recall that one can obtain a smashing weight structure on (a smashing triangulated category) $\cu$ from a connective compactly generating subcategory $\hu$ of $\cu$. We also study pure functors and detecting weights for weight structures of this sort. Moreover, we prove that the Chow weight structure $\wchow$ on the category "big" motivic category $\dmr$ is degenerate (if $R$ is not torsion and the base field $k$ is "big enough"). 
 
In \S\ref{subtlety} we study in detail the relation between different choices of weight complexes and weight Postnikov towers for a fixed object $M$ of $\cu$. This allows us to construct some new  weight structures; in particular, we obtain 
 a new conservative weight-exact motivic functor.

\subsection{Constructing  weight structures  starting from connective subcategories, and the conservativity of motivic functors}\label{sconstrneg}


\begin{defi}\label{dcomp1}
Let $\hu$ be a full subcategory of a triangulated category $\cu$.

We will say that $\hu$ is {\it connective} (in $\cu$) if $\obj \hu\perp (\cup_{i>0}\obj (\hu[i]))$.\footnote{ In earlier texts of the author connective subcategories were called {\it negative} ones. Moreover, in several papers (mostly, on representation theory and related matters) a connective subcategory satisfying certain additional assumptions was said to be {\it silting}.}
\end{defi}

First we recall a statement that allows to construct all bounded weight structures (cf. Proposition \ref{pbw}(\ref{igenlm})).

\begin{pr}\label{pexw}
Assume that $\cu$ is densely generated by its connective additive subcategory $\bu$.

1. Then there exists a unique weight structure $w$ on $\cu$ whose heart contains $\bu$. 
Moreover, this weight structure is bounded, $\hw=\kar_{\cu}\bu$, and $\cu_{w\ge 0}$ (resp. $\cu_{w\le 0}$) is the smallest class of objects that contains $\bu[i]$ for all $i\ge 0$ (resp. $i\le 0$), is extension-closed and retraction-closed in $\cu$.

2. Let $w'$ be a weight structure on a triangulated category $\du$. Then an exact functor $F:\cu\to \du$ is weight-exact with respect to $(w,w')$ (for $w$ as above) if and only if it sends $\bu$ inside the heart $\hw'$.
\end{pr}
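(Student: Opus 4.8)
The plan is to prove part 1 first and then deduce part 2 from it. For part 1, I would begin with the existence of $w$. Set $\cu_{w\ge 0}$ to be the smallest extension-closed and retraction-closed subclass of $\obj\cu$ containing $\bu[i]$ for all $i\ge 0$, and dually define $\cu_{w\le 0}$ using $\bu[i]$ for $i\le 0$. Axioms (i) (retraction-closedness) and (ii) (semi-invariance under shifts) are then more or less immediate from the construction: $\cu_{w\ge 0}[1]$ is built from the generators $\bu[i]$, $i\ge 1$, which form a subfamily of $\bu[i]$, $i\ge 0$, and an analogous remark handles $\cu_{w\le 0}$. The real content is orthogonality (iii) and the existence of weight decompositions (iv). For orthogonality, I would first check $\bu[i]\perp\bu[j]$ for $i\le 0 < j$, which is exactly the connectivity hypothesis $\obj\bu\perp\cup_{i>0}\obj(\bu[i])$ after a shift; then I would bootstrap from the generators to the full classes by noting that the functors $\cu(-,Y)$ and $\cu(X,-)$ are cohomological/homological, so the class of $Y$ with $X\perp Y$ is extension-closed and retraction-closed, and symmetrically in $X$. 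For weight decompositions, I would use the standard induction: since $\cu=\lan\bu\ra$, every object lies in the triangulated subcategory densely generated by $\bu$, and one constructs weight decompositions for objects of the form $B[i]$ trivially, then propagates them along cones and retracts using the "gluing" of weight decompositions (this is the usual argument, e.g. the analogue of Proposition \ref{pbw}(\ref{iwdext}) run in reverse, building the filtration from the bottom). Boundedness is then automatic because the generators are bounded and the bounded objects form a triangulated subcategory stable under retracts containing $\bu$, hence all of $\cu$.

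For the description of the heart, the inclusion $\kar_\cu\bu\subseteq\hw$ follows since $\bu\subseteq\cu_{w=0}$ by construction and $\cu_{w=0}$ is retraction-closed and additive (Proposition \ref{pbw}(\ref{iext})). The reverse inclusion $\hw\subseteq\kar_\cu\bu$ is where I expect to lean on the weight complex machinery: an object $N\in\cu_{w=0}$ has a weight complex concentrated in degree $0$ (by Proposition \ref{pwt}(\ref{iwcons}), since $N\in\cu_{w\le 0}\cap\cu_{w\ge 0}$ forces $t(N)\in K(\hw)_{\wstu\le 0}\cap K(\hw)_{\wstu\ge 0}$, i.e. a single term $N^0\in\cu_{w=0}$), and $N$ is a retract of $N^0$ by Proposition \ref{pwt}(\ref{iwch}); but $N^0$ is assembled from the $M_{\le i}$'s of a bounded weight Postnikov tower whose layers $M_i\in\cu_{w=0}$ — and here I would instead argue more directly that $\cu_{w=0}$ equals $\kar_\cu\bu$ by showing the latter is already a legitimate "heart", or simply cite that the construction of $\cu_{w\ge 0},\cu_{w\le 0}$ forces $\cu_{w=0}$ to be the retraction-closure of the extension-closure of $\bu$ intersected appropriately, which collapses to $\kar_\cu\bu$ because an extension of objects of $\cu_{w=0}$ by objects of $\cu_{w=0}$ splits (Proposition \ref{pbw}(\ref{isplit})). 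Uniqueness of $w$ follows from Proposition \ref{pbw}(\ref{iort}): any weight structure with $\bu\subseteq\hw$ must have $\cu_{w\ge 0}=(\cu_{w\le -1})^\perp\supseteq(\bu[i],i<0)^\perp$ and, being bounded and generated appropriately, must coincide with the minimal choice; I would spell this out by the standard argument that two weight structures with the same bounded generators in their hearts agree.

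For part 2, the "only if" direction is trivial: weight-exactness sends $\cu_{w\le 0}\to\du_{w'\le 0}$ and $\cu_{w\ge 0}\to\du_{w'\ge 0}$, hence $\cu_{w=0}\to\du_{w'=0}$, and $\bu\subseteq\cu_{w=0}$. For the "if" direction, suppose $F(\obj\bu)\subseteq\obj\hw'$. Since $F$ is exact and $\hw'$ sits inside both $\du_{w'\le 0}$ and $\du_{w'\ge 0}$, and these two classes are each extension-closed and retraction-closed (Proposition \ref{pbw}(\ref{idual},\ref{iext}) and its dual), the preimage under $F$ of $\du_{w'\ge 0}$ is an extension-closed, retraction-closed, shift-stable-in-the-right-direction class containing $\bu[i]$ for $i\ge 0$; by the minimality description in part 1 it contains $\cu_{w\ge 0}$. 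The symmetric argument with $\du_{w'\le 0}$ gives $F(\cu_{w\le 0})\subseteq\du_{w'\le 0}$, so $F$ is weight-exact. I expect the main obstacle to be the careful verification of the weight decomposition axiom (iv) in part 1 — propagating decompositions along arbitrary cones requires an honest induction on the "length" of an object in $\lan\bu\ra$ together with a gluing lemma for weight decompositions — though since the excerpt permits citing \cite{bws}, much of this can be imported rather than rederived.
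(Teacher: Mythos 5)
Part 2 of your argument coincides with the paper's proof: there, weight-exactness is deduced in one line from the minimality description of $\cu_{w\le 0}$ and $\cu_{w\ge 0}$ given in part 1 together with the fact that $\du_{w'\le 0}$ and $\du_{w'\ge 0}$ are extension- and retraction-closed (cf. Lemma 2.7.5 of \cite{bger}), exactly as you do. For part 1 the paper gives no argument at all; it cites Corollary 2.1.2 of \cite{bonspkar} (see also Theorem 5.5 of \cite{mendoausbuch}), whereas you sketch a direct construction. Your definition of the two classes, the verification of axioms (i)--(iii) (bootstrapping orthogonality from the connectivity of $\bu$ using that $\cu(X,-)$ and $\cu(-,Y)$ are (co)homological), the boundedness claim, and the uniqueness argument via minimality plus Proposition \ref{pbw}(\ref{iort}) are all sound and are essentially how the cited sources proceed (note only that the inclusion should read $(\cu_{w\le -1})^{\perp}\subseteq(\cup_{i<0}\obj\bu[i])^{\perp}$, and that boundedness of the competing weight structure is not needed for uniqueness).

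The genuine soft spot is the step you dismiss as ``propagates them along cones and retracts''. Propagation along cones is indeed the gluing statement (the proof of Proposition \ref{pbw}(\ref{iwdext}) only uses orthogonality and extension-closedness, so it is available at that stage of the construction). But since $\bu$ only \emph{densely} generates $\cu$, you must also show that the class of objects admitting a weight decomposition is retraction-closed, and this is not a formal consequence of the induction on length in $\lan\bu\ra$: it is precisely the non-trivial content of the cited Corollary 2.1.2 of \cite{bonspkar}, whose subject is extending weight structures from the strongly generated subcategory to dense (idempotent-type) extensions. So this step must either be proved honestly or imported, as the paper does; as written, your induction does not reach all of $\cu$. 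Relatedly, your computation of the heart trails off: the appeal to Proposition \ref{pbw}(\ref{isplit}) does not by itself give $\hw\subset\kar_{\cu}\bu$. The weight-complex argument you started does work if carried through (and once the weight structure is already in hand): by induction on the length of an object $X$ of the extension-closure of $\cup_{i\in\z}\obj\bu[i]$, Proposition \ref{pwt}(\ref{iwcex}) lets one choose $t(X)$ with all terms finite direct sums of objects of $\bu$, and then any $N\in\cu_{w=0}$, being a retract of such an $X$ by dense generation, is a retract of $X^0$ by Proposition \ref{pwt}(\ref{iwch}), hence lies in $\kar_{\cu}\bu$. With those two points repaired (or replaced by the citation, as in the paper and as you yourself allow), the plan is correct.
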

\begin{proof}
1. This is  essentially Corollary 2.1.2 of \cite{bonspkar}; see also Theorem 5.5 of \cite{mendoausbuch}.

2. This is an immediate consequence of the description $w$ given in assertion 1 along with the fact that both $\cu'_{w'\ge 0}$ and  $\cu'_{w'\le 0}$ are retraction-closed and extension-closed in $\cu'$; cf. also  Lemma 2.7.5 of \cite{bger}.
\end{proof}

Let us combine this statement with Proposition \ref{pdetect} to obtain a conservativity result that does not mention weight structures.

\begin{coro}\label{consb}
1. Let $F:\cu\to \cu'$ be an exact functor; assume that there exists a connective additive subcategory $\bu$ of $\cu$ that densely generates it and such that the full subcategory $\bu'$ of $\cu'$ whose object class equals $F(\obj \bu)$ is connective (in $\cu'$), whereas the restriction of $F$ to $\bu$ is full and conservative. 
 
Then $F$ is conservative itself, i.e., it does not kill non-zero objects.

2. The  conservativity condition in assertion 1 is fulfilled if  all endomorphisms of objects of $\bu$ that are killed by $F$ are nilpotent.  \end{coro}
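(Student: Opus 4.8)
The plan is to reduce everything to the weight-exact setting of \S\ref{sweap} and then apply Theorem~\ref{twcons} (equivalently, as the discussion preceding the corollary suggests, Proposition~\ref{pdetect}).

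\emph{Reduction.} For part~1 I would invoke Proposition~\ref{pexw}(1) twice. Applied to $\bu\subset\cu$ it gives the unique weight structure $w$ on $\cu$ with $\hw=\kar_\cu\bu$, and this $w$ is bounded. Put $\du'=\lan\obj\bu'\ra\subseteq\cu'$: the full subcategory $\bu'$ is additive, is connective inside $\du'$ (orthogonality is inherited from $\cu'$), and densely generates $\du'$, so Proposition~\ref{pexw}(1) also produces a bounded weight structure $w'$ on $\du'$ with $\hw'=\kar_{\du'}\bu'$. Next one checks that $F(\cu)\subseteq\du'$: the class $\{M\in\obj\cu:\ F(M)\in\obj\du'\}$ contains $\bu$ and is closed under shifts, cones and retracts (because $F$ is exact and $\du'$ is a full triangulated retraction-closed subcategory), hence it is all of $\cu$. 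Thus $F$ corestricts to an exact functor $\bar F\colon\cu\to\du'$, and since $\bar F(\obj\bu)=\obj\bu'\subseteq\obj\hw'$, Proposition~\ref{pexw}(2) shows $\bar F$ is weight-exact.

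\emph{The crux: $\hf\colon\hw\to\hw'$ is full and conservative.} Write each $N\in\obj\hw=\kar_\cu\bu$ as a $\cu$-retract of some $B\in\obj\bu$, say $v_Nu_N=\id_N$ with $u_N\colon N\to B$, $v_N\colon B\to N$. Fullness of $\hf$ is then a one-line diagram chase: a morphism $g\colon\bar F(N_1)\to\bar F(N_2)$ is the $\bar F$-image of $v_{N_2}\phi u_{N_1}$ for any lift $\phi\in\bu(B_1,B_2)$ of $\bar F(u_{N_2})\,g\,\bar F(v_{N_1})$ along the full functor $\bar F|_\bu$. For conservativity I would show that the kernel morphism ideal $\mathcal I$ of $\hf$ is contained in the radical $\operatorname{rad}(\hw)$ --- which, together with fullness, is exactly conservativity of $\hf$ (lift an inverse, subtract the identity, land in $\mathcal I\subseteq\operatorname{rad}(\hw)$, and use that $\id+r$ is invertible for $r$ in the radical). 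To get $\mathcal I\subseteq\operatorname{rad}(\hw)$: for $h\in\mathcal I(N_1,N_2)$ and $s\in\hw(N_2,N_1)$ the endomorphism $u_{N_1}shv_{N_1}$ of $B_1$ is killed by $\bar F$, hence lies in the Jacobson radical of $\bu(B_1,B_1)$, because $\bar F|_\bu$ is full and conservative so any $r$ in its kernel ideal satisfies that $\id-sr$ maps to an identity for every composable $s$, hence is invertible, i.e.\ $r\in\operatorname{rad}(\bu)$; therefore $\id_{B_1}-u_{N_1}shv_{N_1}$ is invertible in $\cu$, and conjugating its inverse through $u_{N_1},v_{N_1}$ --- with the routine bookkeeping involving the idempotent $u_{N_1}v_{N_1}$ and $v_{N_1}u_{N_1}=\id_{N_1}$ --- yields a two-sided inverse of $\id_{N_1}-sh$ in $\hw$. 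This is the one genuinely non-formal point, and the subtlety to keep in mind is that $\cu$ (hence $\hw$) need not be Karoubian, so all the manipulations must stay inside $\cu$ and inside the additive subcategory $\bu$.

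\emph{Conclusion.} Once $\hf$ is full and conservative, part~1 follows immediately: $w$ is bounded, so every $M\in\obj\cu$ is $w$-bounded, and if $F(M)=0$ then $\bar F(M)=0$, whence $M=0$ by the ``moreover'' clause of Theorem~\ref{twcons}(2) (alternatively, one feeds the pure functor attached to a full conservative embedding of $\hw$ --- which detects $\cu_{w\ge n}$ --- into Proposition~\ref{pdetect}(1),(3), noting that its values on $M$ are computed from the weight complex of $\bar F(M)$ via Proposition~\ref{pwt}(\ref{iwcfunct}), and then kills the leftover $w$-degenerate part by Proposition~\ref{pbw}(\ref{ibond})). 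Since $F$ is exact between triangulated categories, a functor that kills no non-zero object is conservative --- apply this to a cone of a given morphism. For part~2 I would simply observe that the nilpotence hypothesis already forces $F|_\bu$ to be conservative, so that part~1 applies: if $h\in\bu(X,Y)$ has $F(h)$ invertible, lift $F(h)^{-1}$ along the full $F|_\bu$ to some $\tilde k\in\bu(Y,X)$; then $\tilde kh-\id_X$ and $h\tilde k-\id_Y$ are endomorphisms of objects of $\bu$ killed by $F$, hence nilpotent, so $\tilde kh$ and $h\tilde k$ are invertible and $h$ is an isomorphism.
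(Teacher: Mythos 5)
Your proposal is correct, and its skeleton coincides with the paper's: both replace $\cu'$ by the subcategory densely generated by $\bu'$, use Proposition \ref{pexw} to get bounded weight structures with $\hw=\kar_{\cu}\bu$ and to see that the (corestricted) functor is weight-exact, verify that $\hf$ is full and conservative, and then conclude via the bounded case of Theorem \ref{twcons}(2); your part 2 is exactly the standard lifting-plus-nilpotence argument that the paper merely cites as a well-known fact. The one place where you genuinely diverge is the step you yourself single out as non-formal, namely the conservativity of $\hf$. The paper handles it more cheaply: after lifting an inverse it reduces to showing that an endomorphism $p$ of $Q\in\cu_{w=0}$ is invertible when $F(p)$ is, writes $S\cong Q\bigoplus R$ for $S\in\obj\bu$ (the splitting of the idempotent being automatic in the triangulated $\cu$, as in Lemma \ref{lsplit}(2)), and applies conservativity of $F|_{\bu}$ to $p\bigoplus\id_R$ — no radical formalism and no conjugation bookkeeping. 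Your route instead shows that the kernel ideal of $\hf$ lies in the radical of $\hw$ by transferring invertibility of $\id_{B}-u(sh)v$ along the retraction $(u,v)$; the computation you sketch does close (with inverse $v\beta u$), so the argument is sound, and it has the mild advantage of not invoking the triangulated structure of $\cu$ at this point, fitting the radical-themed observation of Remark \ref{rwcons}(4). Both arguments use only the stated hypotheses, so the difference is one of economy rather than of substance.
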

\begin{proof}
1. Obviously, we can   assume that the category $\cu'$ is densely generated by its subcategory $\bu'$, and we will do so. Thus Proposition \ref{pexw} yields the following: $\cu$ and $\cu'$ are endowed with bounded weight structures $w$ and $w'$, respectively, such that $\hw=\kar_{\cu}\bu$,  $\hw'=\kar'_{\cu}\bu$, and $F$ is weight-exact with respect to them. Now we verify that the remaining assumptions of Theorem \ref{twcons} are fulfilled in this setting.

Since all objects of $\hw$ are retracts of objects of $\bu$, the fullness of the restriction of $F$ to $\bu$ implies the fullness of the
corresponding functor $\hf:\hw\to \hw'$. Thus it remains to verify that $\hf$ is conservative. For an $\hw$-morphism $g:M\to N$ we should check that it is invertible whenever $F(g)$ is. Now the fullness assumption gives us the existence of a morphism $h\in \hw(N,M)$ such that $F(h)$ is the inverse to $F(g)$. It remains to prove that the endomorphisms $g\circ h$ and $h\circ g$ 
 are automorphisms. 
Thus it suffices to verify that for any $Q\in \cu_{w=0}$  a morphism $p\in \cu(Q,Q)$ is an automorphism whenever $F(p)$ is. Next, $Q$ is a retract of an object of $S$ of $\bu$, and since $\cu$ is a triangulated,  $S\cong Q\bigoplus R$ 
 (for some $R\in \obj \cu$; actually, $R$ belongs to $\cu_{w=0}$ as well). Thus  $F(p\bigoplus \id_{R})$ is an automorphism; since the restriction of $F$ to $\bu$ is conservative, it follows that $p\bigoplus \id_{R}$ is an automorphism as well. Hence $p$ is an automorphism indeed.

2. A well-known easy fact; see Remark 3.1.5 of \cite{bws}.
\end{proof}

\begin{rema}\label{rayoub}
 Let us describe the relation of our results to  Voevodsky motives.

1. Let $k$ be a perfect field of characteristic $p$ ($p$ may be a prime or zero); let $R$ be a $\zop$-algebra, where we set $\zop=\z$ if $p=0$. Denote by $\dmr$ the smashing category of $R$-linear Voevodsky motives over $k$ (see  \S4.2 of \cite{degmod} or \S1.1 of \cite{cdint}). The category  $\dmr$ contains the additive category $\chowr$ of $R$-linear Chow motives that is connective in it (see Remark 3.1.2 of \cite{bokum},  Remark \ref{rwchow}(1), or  Proposition \ref{paydegen} below). Thus we obtain a {\it Chow} weight structure on the subcategory $\dmgr$ densely generated by $\chowr$ in $\dmr$. Moreover, this weight structure extends to a smashing weight structure on the whole $\dmr$; see Proposition \ref{paydegen} below. 

This method of constructing the Chow weight structure originates from  \S6.5 of \cite{bws} (cf. \cite{bzp} for the case $p>0$); it was carried over to relative motives in \cite{hebpo} and \cite[\S2.1]{brelmot}, whereas in \S2.3 of ibid., \S2.1 of \cite{bonivan}, and \cite{bokum} some other methods were described. 

Moreover, applying Theorem \ref{tpure}(3) we obtain that a (co)homological functor from $\dmgr$ is pure whenever it kills $\chowr[i]$ for all $i\neq 0$. Furthermore, Theorem \ref{thegcomp}(\ref{itnp2}) below gives a similar characterization of those pure functors from $\dmr$ that respect coproducts.

2. Let us now describe an interesting weight-exact functor from $\dm$ that is quite relevant for  \cite{ayoubcon}. 

In this treatise the case $p=0$ and $R=k$ was considered. The category $\dmk$ is equivalent to the category $\da$ (of \'etale $k$-linear motives). Now let us use the symbol $\omp$ for the  truncated de Rham spectrum $\tau_{\ge 0}\omdr$ (see Theorem II of ibid.); note that it is a highly structured ring spectrum with respect to the  model structure on $\da$ that was considered in ibid. 

Thus we can take $\cu'$ to be the derived category of highly structured $\omp$-modules in $\da$, $\cu=\dmgk$,
and take $F$ to be the restriction to $\dmgk$  of  the "free module" functor $-\otimes \omp:\da\to \cu'$.

Next, we can take $\bu$ either to be the subcategory $\chowk\subset \da\cong \dmk$ (of $k$-linear Chow motives) or the category of twists of motives of smooth projective varieties by $-(i)[2i]=\lan i \ra$ for $i\in \z$.  The images of these motives in $\cu'$ give a connective category whose Karoubi envelope is the $k$-linear category of $k$-motives up to algebraic equivalence (cf. the formula (xxviii) of \cite{ayoubcon}; these statements are based on simple cohomological dimension and Poincare duality arguments along with  Remark 7.6 of \cite{blog}). 

It remains to note that the nilpotence assumption of Corollary \ref{consb}(2) easily follows from Corollary 3.3 of \cite{voevnilp}. We obtain that the functor $F$ is conservative (and detects weights; cf. Remark \ref{rwcons}(1)). 

3. Furthermore, Theorem II of  \cite{ayoubcon} 
  appears to  imply that for any $M\in \obj \dmgk$ whose de Rham cohomology is zero the functor $F$ kills all morphisms from $M$ into $\chowk[i]$ for any $i\in \z$ such that  $M\perp \cup_{j>i}\obj \chowk[j]$. Applying Theorem \ref{twcons}(3) we obtain that $M=0$ (cf. Remark \ref{rwcons}(2)). Thus loc. cit. implies that de Rham cohomology is conservative on $\dmgk$ (this Conjecture II of ibid.). This is a very interesting observation that is substantially stronger than Theorem I of ibid. This conservativity conjecture was shown to have very interesting implications in \cite{bcons};  yet see Remark \ref{rgap} above. 

4. We also recall that certain  functors that are pure with respect to  the corresponding Chow weight structures were crucial for the recent papers \cite{kellyweighomol}, \cite{bachinv},  \cite{bontabu}, and \cite{bsoscwhn}. All of these pure functors were defined in terms of   weight complex functors (cf. Theorem \ref{tpure}). Moreover, in \cite{bgn} functors that are pure with respect to certain {\it Gersten weight structures} are considered. 
\end{rema}

\subsection{On purely compactly generated weight structures}\label{spcgw}

Now we pass to the study of  a particular family of smashing weight structures.
In this subsection we will always assume that $\cu$ is smashing.

\begin{defi}\label{dcomp2}
1. An object $M$ of $\cu$ is said to be {\it compact} if  the functor $H^M=\cu(M,-):\cu\to \ab$ respects coproducts. 

2. We will say that $\cu$ is {\it compactly generated} by $\cp\subset \obj \cu$ if $\cp$ is a {\bf set} of compact objects 
that generates  $\cu$ as its own localizing subcategory (see Definition \ref{djsmash}(3)).
\end{defi}

First we recall the following well-known statement.

\begin{lem}\label{lcg}
Let $\cp$ be a set of compact objects of $\cu$. 
 Then $\cp$ compactly generates $\cu$ if and only if $(\cup_{i\in \z}\cp[i])^\perp=\ns$. \end{lem}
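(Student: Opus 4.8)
The plan is to prove the two implications separately, the forward one being essentially immediate and the reverse one being the substantive part. First, suppose $\cp$ compactly generates $\cu$. If $Y \in (\cup_{i\in\z}\cp[i])^\perp$, then for every $P\in\cp$ and every $i\in\z$ we have $\cu(P[i],Y)=\ns$. Consider the full subcategory $\du$ of $\cu$ consisting of all objects $X$ with $\cu(X[i],Y)=\ns$ for all $i\in\z$; one checks directly that $\du$ is a strict triangulated subcategory (closed under shifts, cones, and retracts, using the long exact sequence obtained by applying $\cu(-,Y)$ to a distinguished triangle) and that it is localizing (closed under coproducts, since $\cu(\coprod_j X_j, Y) = \prod_j \cu(X_j,Y)$ — this holds for \emph{all} coproducts, no compactness needed here). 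Since $\du$ contains $\cp$, it contains the localizing subcategory generated by $\cp$, which is all of $\cu$; hence $Y\in\obj\du$, so $\cu(Y,Y)=\ns$ and $Y=0$.

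For the converse, assume $(\cup_{i\in\z}\cp[i])^\perp=\ns$; let $\du$ be the localizing subcategory of $\cu$ generated by $\cp$, and let $Y\in\obj\cu$ be arbitrary. I want to show $Y\in\obj\du$. The standard approach is to build, via a transfinite or countable iteration, a "$\du$-approximation" $X\to Y$ whose cone $Z$ lies in $(\cup_{i\in\z}\cp[i])^\perp$; then $Z=0$ by hypothesis, so $X\cong Y$ and $Y\in\obj\du$. Concretely one constructs a sequence $X_0\to X_1\to X_2\to\cdots$ with $X_0=0$, where at each stage $X_{n+1}$ is the cone of a map $\coprod_{\alpha} P_\alpha[i_\alpha] \to X_n$ ranging over all morphisms from shifts of objects of $\cp$ into $X_n$ (more precisely, one works with the comma-type indexing set of all pairs $(P[i], g)$ with $g\in\cu(P[i], \co(X_n\to Y))$ in order to also kill the relevant maps into the cone over $Y$); one then sets $X=\hcl_n X_n$. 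Each $X_n$ lies in $\du$ since $\du$ is localizing, and $X=\hcl X_n$ lies in $\du$ as well (a countable homotopy colimit is a cone of a map between coproducts, hence lies in any localizing subcategory). Here the compactness of the objects of $\cp$ is used crucially: applying $\cu(P[i],-)$ to the defining triangle $\coprod X_n \to \coprod X_n \to X$ and using that $\cu(P[i],-)$ commutes with the coproducts shows that $\cu(P[i],-)$ commutes with this homotopy colimit, from which one deduces that the cone $Z$ of $X\to Y$ satisfies $\cu(P[i],Z)=\ns$ for all $P\in\cp$, $i\in\z$.

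The main obstacle is the bookkeeping in this construction: one must arrange the approximating system so that, in the colimit, \emph{every} morphism from a shift of an object of $\cp$ into the cone $Z$ has been killed, which requires carefully interleaving the "lift" and "kill" steps and invoking compactness to pass maps out of the cone $\co(X_n\to Y)$ back through the $X_n$'s. Since this is a well-known argument (it is the heart of Neeman's treatment of compactly generated categories, e.g.\ \cite{neebook}), I would cite it rather than reproduce the full inductive construction, and simply record the two implications as above, emphasizing that the forward direction needs no compactness while the reverse direction needs it precisely to ensure $\cu(P[i],-)$ commutes with the countable homotopy colimit $\hcl_n X_n$.
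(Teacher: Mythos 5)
Your proposal is correct, and it ultimately rests on the same foundation as the paper: the paper's entire proof is a citation of \cite[Proposition 8.4.1]{neebook}, which is exactly the Neeman argument you sketch (and defer to) for the substantive direction. Your explicit treatment of the easy implication and your outline of the homotopy-colimit approximation are accurate, so there is no gap — you have simply written out what the cited result encapsulates.
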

\begin{proof}
This is (a part of) \cite[Proposition 8.4.1]{neebook}.\end{proof}

\begin{theo}\label{thegcomp}
Let $\hu$ be a connective  subcategory of $\cu$ such that $\obj \hu$ compactly generates $\cu$, and 
  $\cp=\obj \hu$. Then the following statements are valid.
\begin{enumerate}

\item\label{itnpb} $\cu$ satisfies the  Brown representability property (see Proposition \ref{ppcoprws}(\ref{icopr5b})).

\item\label{itnp1}
There exists a unique smashing weight structure $w$ on $\cu$ such that $\cp\subset \cu_{w=0}$; this weight structure is left non-degenerate.

\item\label{itnp1d} For this weight structure $\cu_{w\le 0}$ (resp. $\cu_{w\ge 0}$) is the smallest subclass of $\obj \cu$ that is closed with respect to coproducts, extensions, and contains $\cp[i]$ for $i\le 0$ (resp. for $i\ge 0$), and $\hw$ is the coproductive hull of $\hu$ in $\cu$ (see Definition \ref{dbh}(1)).

Moreover, $\cu_{w\ge 0}=(\cup_{i<0}\cp[i])^{\perp}$. 

\item\label{itnp2} Let $H$ be a cohomological  functor from $\cu$ into an abelian category $\au$ that converts all small coproducts into products. Then it is pure if and only if it kills $\cup_{i\neq 0}\cp[i]$.

\item\label{itnwe} Let $F:\cu\to \du$ be an exact functor that respects coproducts, where $\du$ is (a triangulated category)     endowed with a smashing weight structure $v$. Then $F$ is weight-exact if and only if it sends $\cp$ into $\du_{v=0}$.

\item\label{itnp3} The category $\hrt\subset \cu$ of $w$-pure representable functors from $\cu$ (so, we identify an object of $\hrt$ with the functor from $\cu$ that it represents)  is equivalent to the category $\au_{\cp}$ of 
 additive contravariant functors from $\hu$ into $\ab$ (i.e., we take those functors that respect the addition of morphisms). 

Moreover, $\aucp$ (and so also $\hrt$) is Grothendieck abelian, has enough projectives, and  an injective cogenerator. 
Restricting functors representable by elements of $\cp$ to $\hw$ one obtains a full embedding $\hw\to \aucp$ whose essential image is the subcategory of projective objects of $\aucp$.

\item\label{itnpr} $w$ restricts (see Definition \ref{dwso}(\ref{idrest})) to a bounded weight structure on the subcategory of compact objects of $\cu$, and the  heart of this restriction is the  retraction-closure of $\hu$ in $\cu$ (so, we consider only retracts of finite coproducts $\coprod P_i$ for $P_i\in \cp$).
\end{enumerate}
\end{theo}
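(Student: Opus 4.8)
The plan is to establish parts \ref{itnpb}--\ref{itnp1d} first and deduce the rest from them. Part \ref{itnpb} is immediate from Neeman's Brown representability theorem (see \cite{neebook}), as $\cu$ is compactly generated by the set $\cp=\obj\hu$. For parts \ref{itnp1} and \ref{itnp1d} I would put $\cu_{w\ge 0}=(\cup_{i<0}\cp[i])^{\perp}$ and let $\cu_{w\le 0}$ be the smallest class of objects of $\cu$ that contains $\cp[i]$ for all $i\le 0$ and is closed under coproducts, extensions and retracts (by \cite{bsnew} retracts turn out to be superfluous here, which yields the precise form claimed in \ref{itnp1d}). Connectivity of $\hu$ makes conditions (i)--(iii) of Definition \ref{dwstr} automatic: the right-orthogonal class is retraction-closed and stable under $[1]$; shifting the generators of $\cu_{w\le 0}$ down keeps them among the generators; and since $\cp[i]\perp\cp[i']$ for $i'>i$, each generator $\cp[i]$ with $i\le 0$ is left-orthogonal to $\cu_{w\ge 1}=(\cup_{j\le 0}\cp[j])^{\perp}$, while the left-orthogonal of a fixed class is closed under coproducts, extensions and retracts. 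For weight decompositions I would use the usual cellular-approximation construction (one may also simply cite \cite{bws}): given $M$, set $N_0=M$ and $N_{k+1}=\co(P_k\to N_k)$ with $P_k=\coprod\cp[j]$ over all $j\le 0$ and all $f\in\cu(\cp[j],N_k)$; then $RM=\hcl N_k$ lies in $(\cup_{j\le 0}\cp[j])^{\perp}=\cu_{w\ge 1}$ because compactness of the $\cp[j]$ makes every such $f$ vanish in the colimit, and $LM=\hcl F_k$, where $F_k$ is the fibre of $M\to N_k$, lies in $\cu_{w\le 0}$ since, by an octahedral diagram, each $F_k$ is a finite iterated extension of the coproducts $P_0,\dots,P_{k-1}\in\cu_{w\le 0}$. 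Uniqueness holds since any smashing weight structure $w'$ with $\cp\subset\cu'_{w'=0}$ has $\cu'_{w'\le 0}$ and $\cu'_{w'\ge 0}$ closed under coproducts (automatically, resp.\ because $w'$ is smashing), extensions and retracts and containing the respective generators, so $\cu_{w\le 0}\subset\cu'_{w'\le 0}$ and $\cu_{w\ge 0}\subset\cu'_{w'\ge 0}$; two weight structures related by these inclusions coincide by Proposition \ref{pbw}(\ref{iort}). Finally, a left weight-degenerate object lies in $\cap_{i\in\z}\cu_{w\ge i}=(\cup_{i\in\z}\cp[i])^{\perp}=\ns$ by Lemma \ref{lcg}, so $w$ is left non-degenerate.

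Parts \ref{itnp2} and \ref{itnwe} are short consequences of the generating descriptions in \ref{itnp1d}. For \ref{itnp2}: a $w$-pure $H$ annihilates $\cu_{w\ge 1}\supset\cup_{i>0}\cp[i]$ and $\cu_{w\le -1}\supset\cup_{i<0}\cp[i]$; conversely, if a cohomological $H$ that converts coproducts into products kills $\cup_{i\ne 0}\cp[i]$, then $\{X\in\obj\cu:H(X)=0\}$ is closed under coproducts and extensions and contains $\cp[i]$ for $i\le -1$ and for $i\ge 1$, hence contains $\cu_{w\le -1}$ and $\cu_{w\ge 1}$. For \ref{itnwe}: weight-exactness forces $F(\cp)\subset\du_{v\le 0}\cap\du_{v\ge 0}=\du_{v=0}$; conversely, if $F(\cp)\subset\du_{v=0}$ then $\{X:F(X)\in\du_{v\le 0}\}$ and $\{X:F(X)\in\du_{v\ge 0}\}$ are closed under coproducts (the second because $v$ is smashing), extensions and retracts and contain the respective generators, hence contain $\cu_{w\le 0}$ and $\cu_{w\ge 0}$.

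Part \ref{itnp3} I would obtain by chaining three identifications. By Remark \ref{rpuresd}(2) the objects representing $w$-pure functors are precisely those of $(\cu_{w\ge 1}\cup\cu_{w\le -1})^{\perp}$; by Theorem \ref{tpure}(2) together with Remark \ref{rpuresd}(1), the pure cohomological functors $\cu\to\ab$ are exactly the $H_{\ca'}$ for contravariant additive $\ca':\hw\to\ab$; and by Proposition \ref{ppcoprws}(\ref{icopr5b}), applicable by part \ref{itnpb}, such $H_{\ca'}$ is representable exactly when $\ca'$ sends $\hw$-coproducts to products, the corresponding Yoneda-type functor giving an equivalence between $\hrt$ and the category of contravariant additive functors $\hw\to\ab$ that send $\hw$-coproducts to products. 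Since $\hw$ is the coproductive hull of $\hu$ (part \ref{itnp1d}) and the objects of $\cp$ are compact, restriction along $\hu\hookrightarrow\hw$ is an equivalence from this last category onto $\adfu(\hu\opp,\ab)=\aucp$, whence $\hrt\simeq\aucp$. Now $\aucp$, being the category of additive presheaves of abelian groups on the small additive category $\hu$, is Grothendieck abelian; the representables $\hu(-,P)$ ($P\in\cp$) form a generating set of projective objects, so $\aucp$ has enough projectives, and it also has an injective cogenerator since it is Grothendieck. Finally $N\mapsto\cu(-,N)|_{\hu}$ is a full embedding $\hw\to\aucp$, faithful since $\cp$ generates $\hw$ and full since a transformation between restrictions of representables extends along coproducts and retracts; it carries a coproduct of objects of $\cp$ to the corresponding coproduct of representables (by compactness of $\cp$), so its essential image consists of the retracts of coproducts of representables, that is, of the projective objects of $\aucp$.

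For part \ref{itnpr} I would use the standard description of the compact objects of a compactly generated triangulated category (see \cite{neebook}): the subcategory $\cu^{c}$ of compact objects equals the thick subcategory $\langle\cp\rangle$. Hence $\cu^{c}$ is densely generated by the connective category $\hu$, and Proposition \ref{pexw} provides a bounded weight structure $w^{c}$ on it with $\cu^{c}_{w^{c}\le 0}$ (resp.\ $\cu^{c}_{w^{c}\ge 0}$) the extension- and retraction-closure in $\cu^{c}$ of $\cup_{i\le 0}\cp[i]$ (resp.\ of $\cup_{i\ge 0}\cp[i]$) and heart $\kar_{\cu^{c}}\hu$. Since $\cu_{w\le 0}\cap\obj\cu^{c}$ and $\cu_{w\ge 0}\cap\obj\cu^{c}$ are extension- and retraction-closed in $\cu^{c}$ and contain these generators, one gets $\cu^{c}_{w^{c}\le 0}\subset\cu_{w\le 0}\cap\obj\cu^{c}$ and $\cu^{c}_{w^{c}\ge 0}\subset\cu_{w\ge 0}\cap\obj\cu^{c}$; therefore any $w^{c}$-weight decomposition of a compact object has both terms compact and lying in the corresponding $w$-classes, so it is a $w$-weight decomposition, which proves that $w$ restricts to $\cu^{c}$. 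Applying Proposition \ref{pbw}(\ref{iort}) inside $\cu^{c}$ upgrades the last two inclusions to equalities, so the restriction equals $w^{c}$; its heart $\kar_{\cu^{c}}\hu$ is the retraction-closure of $\hu$ in $\cu$, since retracts of finite coproducts of compact objects are computed identically in $\cu^{c}$ and in $\cu$. The step I expect to be the genuine obstacle is the weight-decomposition construction of parts \ref{itnp1}/\ref{itnp1d}: one must check that the homotopy colimit $LM=\hcl F_k$ really lies in $\cu_{w\le 0}$, i.e., in the closure of $\{\cp[i]:i\le 0\}$ under coproducts, extensions and retracts, and since this closure is not obviously stable under countable homotopy colimits, the argument has to exploit that along the tower the cofibres of the transition maps are coproducts of the $\cp[j]$ with $j\le 0$.
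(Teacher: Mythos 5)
Most of your proposal is sound and, for parts \ref{itnpb}, \ref{itnp2}, \ref{itnwe}, \ref{itnp3} and \ref{itnpr}, runs essentially parallel to the paper (which quotes \cite{neebook} for \ref{itnpb} and \ref{itnpr} and otherwise cites \cite{bsnew}; for \ref{itnp3} you take precisely the alternative route via Proposition \ref{ppcoprws}(\ref{icopr5b}) that the paper itself mentions). The genuine gap is the one you flag at the end, and it is not a routine verification: with $\cu_{w\le 0}$ defined as the coproduct--extension(--retraction) closure of $\{\cp[i]:i\le 0\}$, nothing you wrote shows that $LM=\hcl F_k$ lies in this class. The defining triangle $\coprod F_k\to\coprod F_k\to LM\to(\coprod F_k)[1]$ only exhibits $LM$ as an extension involving $(\coprod F_k)[1]$, which sits in $\cu_{w\le 1}$ and leaves the class; unlike the aisle of a $t$-structure, this closure is not stable under $[1]$, so there is no formal argument, and this membership is exactly the hard point of the existence theorem that the paper outsources to Corollary 2.3.1 and Lemma 2.3.3 of \cite{bsnew}. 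A second, smaller gap: you never prove the other half of \ref{itnp1d}, namely that $\cu_{w\ge 0}=(\cup_{i<0}\cp[i])^{\perp}$ coincides with the coproduct--extension closure of $\{\cp[i]:i\ge 0\}$, nor that $\hw$ is the coproductive hull of $\hu$; yet you use the former silently in the uniqueness argument (the inclusion $\cu_{w\ge 0}\subset\cu'_{w'\ge 0}$ needs it) and the latter explicitly in your proof of \ref{itnp3}.

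Both gaps can be closed inside your framework. For the first, it is easier to define $\cu_{w\le 0}:={}^{\perp}\cu_{w\ge 1}$, so that orthogonality is automatic, and to check $\cu(LM,X)=0$ for $X\in\cu_{w\ge 1}=(\cup_{j\le 0}\cp[j])^{\perp}$ directly: applying $\cu(-,X)$ to the telescope triangle one gets $\cu(\coprod F_k,X)=\prod_k\cu(F_k,X)=0$ (each $F_k$ is a finite extension of coproducts of the $\cp[j]$, $j\le 0$), whence $\cu(LM,X)$ is identified with $\prli^1\cu(F_k[1],X)$; the triangles $F_k\to F_{k+1}\to P_k\to F_k[1]$ together with $\cu(P_k,X)=0$ show that the transition maps of this tower are surjective, so the $\prli^1$-term vanishes by Mittag--Leffler. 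Having the weight structure, the two ``smallest class'' descriptions and the heart description in \ref{itnp1d} are then recovered from Proposition \ref{ppcoprws}(\ref{icopr7},\ref{icopr7p}) applied with $\du=\cu$ and the choices $w_{\le k}P=P$ for $k\ge 0$, $w_{\ge k}P=P$ for $k\le 0$ and $0$ otherwise, together with the retraction-closedness of such hulls (Corollary 2.1.3(2) of \cite{bsnew}); this also repairs the uniqueness argument and feeds your proofs of \ref{itnp2}, \ref{itnwe} and \ref{itnp3} as written. Alternatively, one simply cites \cite{bsnew} (or Theorem 5 of \cite{paucomp} for the bare existence), which is what the paper does.
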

\begin{proof}

Assertion \ref{itnpb} is a particular case of  Proposition 8.4.2 of \cite{neebook}.

Assertions \ref{itnp1}--\ref{itnwe}  follow from  Corollary  2.3.1 and Lemma 2.3.3 of \cite{bsnew} easily (see Remark 2.3.2(2) of ibid.). 

\ref{itnp3}. Since objects of $\hu$ are compact in $\cu$, $\hw$ is naturally equivalent to the formal coproductive hull of $\hu$ (see Definition \ref{dbh}(2)). Thus $\hw$ embeds into the category $\aucp$. As we have noted in Remark \ref{rabver}(2) (we take $R=\z$ in that remark), the essential image of this embedding is the subcategory of projective objects of $\aucp$. Moreover, the category $\aucp$ has enough projectives; since  it is Grothendieck abelian, it also possesses an injective cogenerator.

It remains to prove that $\hrt$ is equivalent to $\aucp$. We recall that $\aucp$ is equivalent to the category $\adfu(\hu_0\opp,\ab)$, where $\hu_0$ is   an essentially wide small subcategory of the retraction-closure of $\hu$ in $\cu$. Moreover,  recall from Remark \ref{rpuresd}(2)  that pure representable functors are the ones represented by elements of $(\cu_{w\le -1}\cup \cu_{w\ge 1})^{\perp}$. Combining these statements with Theorem 4.5.2(II.2) of \cite{bws}  (along with Lemma \ref{lcg} above) we easily obtain the equivalence in question (cf. Proposition \ref{padj} below); one an also deduce it from Proposition\ref{ppcoprws}(\ref{icopr5b}). 

\ref{itnpr}. According to  Lemma 4.4.5 of \cite{neebook}, the subcategory of compact objects of $\cu$ equals $\lan \cp \ra$. Thus it remains to apply Proposition \ref{pexw}(1). 
\end{proof}

\begin{rema}\label{rgensmash}
\begin{enumerate}
\item\label{irspure}
Let us make 
 two simple observations related to pure functors in this setting. 

If $H': \cu\to \au$ is a homological functor that respects coproducts then applying part \ref{itnp2} of our theorem to the opposite (cohomological) functor $H$ from $\cu$ into $\au\opp$ we obtain that $H'$ is pure if and only if it kills $\cup_{i\neq 0}\cp[i]$.

Next,  the description of $\hw$ (combined with Theorem \ref{tpure}) immediately implies that two pure (co)homological functors $H'_1$ and $H'_2$ from $\cu$ that  respect coproducts (resp. $H_1$ and $H_2$ that  convert coproducts into products)  are isomorphic if and only if their restrictions to $\hu$ are.

\item\label{irs3}
Let us now describe a simple example to our theorem that will be useful for us below. 

So, let $\bu$ be a small additive category; define $\hu'$ as the  formal coproductive hull of $\obj \bu$ (see Definition \ref{dbh}(2)).  Then we take  $\cu\subset K(\hu')$ to be the localizing subcategory  that is generated by $\obj \hu'$. Obviously, the set $\cp=\obj \bu$ compactly generates $\cu$, and $\bu$ is connective in $\cu$. Thus there is a unique smashing weight structure $w$ on $\cu$ whose heart contains $\bu$. It is easily seen that this $w$ is the restriction of the weight structure $\wstu$ (see Remark \ref{rstws}(1)) from $K(\hu')$ to $\cu$; see Remark 2.3.2(1) of \cite{bsnew}. Thus $\cu_{w\ge 0}=K(\hu')_{\wstu\ge 0}$, and  part \ref{itnp1d} of our theorem tells us that this class also equals $(\cup_{i<0}\cp[i])^{\perp}$.

\item\label{irs1}
In \S2.3 of \cite{bsnew} (cf. also \S2.2 of ibid.)  a much  more general setting of {\it class-generated} weight structures was considered. In particular, it is not necessary to assume that $\cp$ is a set to have parts \ref{itnp1}--\ref{itnp2}   of our theorem.
So the main problem with the corresponding weight structures is that it is not  clear whether the categories of pure representable functors are "nice enough". 

Another generalization of our theorem was obtained in \S3.3 of \cite{bvtr}.

\item\label{irs5} Furthermore, by Theorem 5 of \cite{paucomp}, for {\bf any} set $\cp$ of compact objects there exists a weight structure with $\cu_{w\ge 0}=(\cup_{i<0}\cp[i])^{\perp}$. 
However, it does not follow that $w$ restricts to the subcategory of compact objects of $\cu$ (as it does in our theorem and in Theorem 3.3.1 of \cite{bvtr}). Respectively, one may call weight structures studied in our theorem {\it purely compactly generated} ones (to distinguish them from general compactly generated ones).

\item\label{irst} As we recall in  Proposition \ref{padj} below, the category  $\hrt$ is the heart of a $t$-structure $t$ that is {\it right adjacent} to $w$ in the sense of \cite[\S1.3]{bvt}; whence the notation.
\end{enumerate}
\end{rema}

Now let us study  pure functors and detecting weights for weight structures provided by our theorem (cf. Remark \ref{rpgcomp} below). 

\begin{pr}\label{pgcomp}
Adopt the notation and the assumptions of Theorem \ref{thegcomp}. 
Let us choose an injective cogenerator $I$ of the category $\hrt$;  we will write $H_I$ for the representable functor $\cu(-,I)$, and $\cacp$ for the Yoneda embedding functor $Q\mapsto (P\mapsto \cu(P,Q)):\ \hw\to \aucp$ (where $P$ runs through $\cp$). Let $M$ be an object of $\cu$, $n\in \z$. 

Then the following conditions are equivalent.

(i). $t(M)\in K(\hw)_{\wstu\ge n}$ (cf. Remark \ref{rwc}(\ref{irwco})).

(ii). $H_j^{\cacp}(M)=0$ for $j<n$.

(iii). $M\perp \cup_{j<n}\{I[j]\}$.

(iv). $H_j(M)=0$ for any $j<n$ and any pure homological functor $H$.
\end{pr}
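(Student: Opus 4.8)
The plan is to prove the cycle of implications (i) $\Rightarrow$ (ii) $\Rightarrow$ (iii) $\Rightarrow$ (iv) $\Rightarrow$ (i), using that $\cacp$ is exactly the Yoneda-type functor of Proposition \ref{pdetect}(2) (with $R=\z$, $\hu$ a skeleton of $\hu$) and that $H_I=\cu(-,I)$ is a pure cohomological functor since $I\in\obj\hrt\subset(\cu_{w\le -1}\cup\cu_{w\ge 1})^{\perp}$ (see Remark \ref{rpuresd}(2) and Theorem \ref{thegcomp}(\ref{itnp3})). Throughout I would identify $H^{\cacp}_*(M)$ with the homology of the complex $\cacp(M^{j})$ for a fixed choice $t(M)=(M^j)$, via Theorem \ref{tpure}(1); by Proposition \ref{pwt}(\ref{irwcsh}) the shift is harmless, so it suffices to treat $n=0$ and then reindex.

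First, (i) $\Rightarrow$ (ii): if $t(M)\in K(\hw)_{\wstu\ge 0}$ we may, by Remark \ref{rwc}(\ref{irwco}), choose a representative complex $(M^j)$ with $M^j=0$ for $j>0$, i.e. concentrated in non-positive degrees; applying the additive functor $\cacp$ termwise and computing homology in negative degrees gives $H^{\cacp}_j(M)=0$ for $j<0$ (there are simply no terms there). Next, (ii) $\Rightarrow$ (iii): this is essentially Proposition \ref{pdetect}(1). Indeed $\cacp$ satisfies hypothesis (i) of that proposition by Proposition \ref{pdetect}(2) (combined with Corollary \ref{cbontabu}(2) / Remark \ref{rabver}(2): objects of $\hu_0$ are projective in $\aucp$ and there are enough projectives), and it satisfies (ii) of that proposition because $\cacp$ is a full embedding, hence full and conservative, hence Proposition \ref{pdetect}(3) applies. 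But wait — Proposition \ref{pdetect}(1) requires $M$ to be $w$-bounded below, which is not assumed here; so I cannot invoke it as a black box. Instead I would run the argument of its proof directly, using that $w$ is left non-degenerate (Theorem \ref{thegcomp}(\ref{itnp1})): choosing the maximal $m\le 0$ with $M\in\cu_{w\ge m}$ if it exists, the proof of Proposition \ref{pdetect}(1) shows $H^{\cacp}_m(M)\ne 0$, forcing $m\ge 0$; and if no such $m$ exists then $M$ is left $w$-degenerate, hence $M=0$ by non-degeneracy and everything is trivial. From $M\in\cu_{w\ge 0}$ and $I[j]\in\cu_{w\le j}[?]$ — more precisely $I\in\hrt$ so $I\perp\cu_{w\ge 1}$ and, dually, $H_I=\cu(-,I)$ being pure kills $\cu_{w\le -1}$, so $\cu_{w\ge 0}\perp I[j]$ for $j<0$ — we get (iii).

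Then (iii) $\Rightarrow$ (iv): given an arbitrary pure homological functor $H$, by Theorem \ref{tpure}(2) it is of the form $H^{\ca}$ for an additive $\ca:\hw\to\au$; I want $H_j(M)=0$ for $j<0$. The key point is that $I$ is an injective cogenerator of $\hrt\simeq\aucp$, and via the equivalence $\hw\simeq$ (projectives of $\aucp$) the pure homological functors $H^{\ca}$ are controlled by $\aucp$-valued data. Concretely: condition (iii), namely $M\perp I[j]$ for $j<0$, says $H_I^{j}(M)=0$ for $j<0$, i.e. the complex $\aucp(M^{j},I)=\Hom_{\aucp}(\cacp M^{j},I)$ has vanishing homology in the relevant degrees; since $I$ is an injective cogenerator of $\aucp$ this forces the complex $\cacp(M^j)$ to have $H_j=0$ for $j<0$, i.e. we recover (ii). Thus (iii) $\Leftrightarrow$ (ii), and then by Theorem \ref{tpure}(1)–(2) the weight spectral sequence / the explicit description of $H^{\ca}$ in terms of the complex $\ca(M^j)$ shows that if $t(M)$ is quasi-isomorphic (after applying $\cacp$) to something non-negatively-supported then so is $\ca(M^j)$ — more carefully, one uses that $\ca$ factors (up to the relevant homology) through $\aucp$ because every additive $\ca:\hw\to\au$ extends along the localization of $\hw$ into $\aucp$, the target being functors out of a generating projective class. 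Finally (iv) $\Rightarrow$ (i): take $H=H^{\cacp}$ itself (or, better, take $H=H_I$ reinterpreted homologically — but $\cacp$ is cleaner); from $H^{\cacp}_j(M)=0$ for $j<0$ re-run the argument of (ii) $\Rightarrow$ (iii) above to get $M\in\cu_{w\ge 0}$, and then Proposition \ref{pwt}(\ref{iwcons}) — in the bounded-below case — gives $t(M)\in K(\hw)_{\wstu\ge 0}$; in the unbounded case one instead argues via Proposition \ref{ppcoprws}(\ref{icopr7}) that $t(M)$ lies in the localizing subcategory of $K(\hw)$ generated by the $\cp$, all of which sit in degree $0$, together with the vanishing of low-degree homology after applying $\cacp$, to conclude $t(M)\in K(\hw)_{\wstu\ge 0}$ directly.

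The main obstacle I anticipate is precisely the non-boundedness: Propositions \ref{pdetect}(1) and \ref{pwt}(\ref{iwcons}) are both stated for $w$-bounded-below objects, so the heart of the argument — extracting $M\in\cu_{w\ge 0}$ from vanishing of $H^{\cacp}_{<0}(M)$ and then lifting this to a statement about $t(M)$ in $K(\hw)$ without finiteness — has to be done by hand, leaning on left non-degeneracy of $w$ (Theorem \ref{thegcomp}(\ref{itnp1})) and on the smashing/localizing-generation description of weight complexes in Proposition \ref{ppcoprws}(\ref{icopr7},\ref{icopr7p}). A secondary subtlety is checking carefully that "$I$ is an injective cogenerator of $\hrt\simeq\aucp$" really does upgrade the pointwise vanishing $M\perp I[j]$ to vanishing of homology of $\cacp(M^{\bullet})$, which needs the exactness of $\Hom_{\aucp}(-,I)$ together with cogeneration, applied to the (possibly infinite) complex $\cacp(M^{\bullet})$ whose terms are projective; this is routine homological algebra but worth spelling out since the complex is unbounded.
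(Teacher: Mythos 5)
There is a genuine gap, and it sits exactly at the point you yourself flagged as the main difficulty. Your route for (ii)$\Rightarrow$(iii) goes through the intermediate claim $M\in \cu_{w\ge 0}$, and your case analysis ("if no maximal $m$ with $M\in\cu_{w\ge m}$ exists then $M$ is left $w$-degenerate, hence $0$") is wrong: the set $\{m:\ M\in\cu_{w\ge m}\}$ may simply be empty, i.e. $M$ need not be $w$-bounded below at all, and then $M$ is not left degenerate and left non-degeneracy gives nothing. Worse, the intermediate claim itself is false in this setting: a non-zero right $w$-degenerate object (these do exist for purely compactly generated weight structures, e.g. by Proposition \ref{paydegen}; cf. also Remark \ref{rdetect}(3)) has $t(M)=0$, hence satisfies all of (i)--(iv) for every $n$, while it lies in no $\cu_{w\ge m}$ whatsoever (Proposition \ref{pbw}(\ref{ibond})). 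This is precisely why the proposition is formulated in terms of $t(M)$ rather than of $M$, and why Remark \ref{rpgcomp}(1) only asserts $M\in\cu_{w\ge n}$ under a $w$-boundedness above assumption. So any argument passing through $M\in\cu_{w\ge n}$ cannot be repaired.

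The step that actually carries the content, namely getting from the vanishing of $H^{\cacp}_j(M)$ for $j<n$ (condition (ii), or (iv)) back to condition (i) about $t(M)$, you only gesture at ("conclude $t(M)\in K(\hw)_{\wstu\ge 0}$ directly"). What is needed, and what the paper uses, is the combination: by Proposition \ref{ppcoprws}(\ref{icopr7}) the complex $t(M)$ lies in the localizing subcategory $K'(\hw)$ of $K(\hw)$ generated by $\obj\hw$; by Remark \ref{rgensmash}(\ref{irs3}) (i.e. Theorem \ref{thegcomp}(\ref{itnp1d}) applied to this purely compactly generated situation) one has, inside $K'(\hw)$, the identification $K(\hw)_{\wstu\ge n}=(\cup_{j<n}\obj\hw[j])^{\perp}$; and finally $K(\hw)(P[j],t(M))\cong H_j^{\cacp}(M)(P)$ for $P\in\cp$, so condition (ii) is literally this orthogonality (retracts of coproducts of the $P$'s cause no trouble). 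Without this orthogonality description of $\wstu_{\ge n}$ on the localizing subcategory your argument has a hole; likewise your alternative suggestion for (iii)$\Rightarrow$(iv), that an arbitrary additive $\ca:\hw\to\au$ "factors through $\aucp$ up to the relevant homology", is unjustified (and unnecessary: once (ii)$\Rightarrow$(i) is in place, (i)$\Rightarrow$(iv) is immediate from Theorem \ref{tpure}(2) together with Remark \ref{rwc}(\ref{irwco})). The parts you do have are correct and agree with the paper: (i)$\Rightarrow$(ii),(iv), and the injective-cogenerator computation $\cu(M[-j],I)\cong\aucp(H_j^{\cacp}(M),I)$ (using that $\cacp(M^s)$ consists of projectives and $\aucp(-,I)$ is exact), which gives (ii)$\Leftrightarrow$(iii) exactly as in the paper's proof.
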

\begin{proof}
Applying  Proposition \ref{ppcoprws}(\ref{icopr7}) we obtain that $t(M)$  is an object of the localizing subcategory $K'(\hw)$ of $K(\hw)$ generated by $\obj \hw$. Recalling Remark \ref{rgensmash}(\ref{irs3}) we deduce that $t(M)\in K(\hw)_{\wstu\ge 0}$ if and only if $N[j]\perp M$ for all $j<0$ and $N\in \obj \hw$.  Combining this statement with Theorem \ref{thegcomp}(\ref{itnp1d}) we conclude that  conditions (i) and (ii) are equivalent.

 Next, for any $J\in \obj \hrt$ and $j\in \z$ the group $\cu(M[-j],J)$ is isomorphic to the $-j$th cohomology of the complex $\cu(M^s,J)$ according to Theorem \ref{tpure}(2). If $J$ is an injective object of $\hrt$ then this groups is isomorphic to $\aucp(H_j^{\cacp}(M), J)$. Since $I$ cogenerates $\hrt$, we obtain that  $H_j^{\cacp}(M)=0$ if and only if $M\perp I[j]$; hence conditions (ii) and (iii) are equivalent.

Lastly, condition (iv) clearly implies condition (ii), and it follows from condition (i) according to Theorem \ref{tpure}(2).
\end{proof}

\begin{rema}\label{rpgcomp}
 1. This statement can be thought about as a complement to Proposition \ref{pwt}(\ref{iwcons}) (in our setting). Clearly, if $M\in \cu_{w\ge n}$ then it fulfils (all) the conditions of our proposition, and the converse application is fulfilled whenever $M$ is $w$-bounded above.

2. A more or less explicit description of all objects fulfilling these conditions  immediately follows from Corollary 4.1.4(1) 
of \cite{bkwn}. \end{rema}

Let us now prove that the Chow weight structure on $\dmr$ is ("very often") degenerate.

\begin{pr}\label{paydegen}
Let $k$ be a perfect  field, $p=\cha k$; let $R$ be a commutative associative $\zop$-algebra  (where we set $\zop=\z$ if $p=0$). Denote by $\dmr$ the smashing category of $R$-linear Voevodsky motives over $k$  (cf. Remark \ref{rayoub}(1)) 

Then the subcategory $\hu\subset \dmr$ of twists of $R$-motives of smooth projective $k$-varieties by $-(i)[2i]=-\lan i \ra$ is connective and compactly generates $\dmr$ (cf. Theorem \ref{thegcomp}); consequently, $\hu$ purely compactly generates a left non-degenerate weight structure $\wchow$ on $\dmr$.

Moreover, if  $R$ is not torsion and $k$ is of infinite transcendence degree over its prime subfield then 
 $\wchow$ is right degenerate.
\end{pr}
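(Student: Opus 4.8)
The first part of the statement is routine given what has been set up: the subcategory $\hu$ of twists of motives of smooth projective varieties is connective in $\dmr$ by the standard vanishing of higher Chow groups / morphisms between Chow motives in positive degrees (this is recalled, e.g., in Remark 3.1.2 of \cite{bokum}), and it compactly generates $\dmr$ because the motives of smooth projective varieties and their Tate twists generate $\dmr$ as a localizing subcategory and are compact there. Hence Theorem \ref{thegcomp} applies and yields the purely compactly generated left non-degenerate weight structure $\wchow$ with $\hw$ the coproductive hull of $\hu$; that is exactly the Chow weight structure. So the real content is the last assertion: under the hypotheses that $R$ is not torsion and $k$ has infinite transcendence degree over its prime field, $\wchow$ is right degenerate, i.e.\ there is a nonzero object in $\cap_{i\in\z}\dmr_{\wchow\le i}$.

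The plan is to exhibit such an object explicitly as a homotopy colimit. Using infinite transcendence degree, choose an infinite tower of field extensions, or more precisely a sequence of smooth projective varieties / motives whose transition maps are ``weight-decreasing'' in a suitable sense; the natural candidate is a countable homotopy colimit $Y=\hcl_{i\ge 0} Y_i$ where each $Y_i$ lies in $\dmr_{\wchow\le -i}$, so that by Proposition \ref{ppcoprws}(\ref{icoprhcl}) the object $Y$ is right weight-degenerate. Concretely I would take the $Y_i$ to be (twists/shifts of) motives built from a strictly increasing chain of subfields $k_0\subset k_1\subset \cdots \subset k$ of $k$ — available precisely because $k$ has infinite transcendence degree — with transition maps induced by inclusions, arranged so that each successive map raises the ``Tate twist depth'' and hence shifts the object down in the Chow weight filtration. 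Then one must check two things: (a) that each $Y_i$ genuinely lies in $\dmr_{\wchow\le -i}$ (which follows from the description of $\dmr_{\wchow\le 0}$ in Theorem \ref{thegcomp}(\ref{itnp1d}) as generated under coproducts and extensions by the $\cp[j]$, $j\le 0$, together with the shift by the twist), and (b) that the colimit $Y$ is \emph{nonzero}. For (b) one detects non-vanishing by applying a suitable (co)homological invariant that survives the colimit: since $R$ is not torsion, motivic cohomology with $R$-coefficients (or a realization, or a pure functor coming from Theorem \ref{tpure} applied to a nonzero additive functor on $\hw$) will be nonzero on $Y$ because the relevant groups along the tower stabilize to something nonzero, using that $\dmr(\mathbf{1},\mathbf{1})\supseteq R\neq 0$ and that $R$ being non-torsion prevents the relevant transition maps from becoming eventually zero.

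The main obstacle I anticipate is step (b): proving $Y\neq 0$. Right weight-degeneracy is the ``easy'' direction via Proposition \ref{ppcoprws}(\ref{icoprhcl}), but left non-degeneracy of $\wchow$ (which does hold) means one cannot produce a degenerate object too cheaply — it must be genuinely infinite-dimensional, and homotopy colimits can collapse to zero. So the delicate point is choosing the tower $(Y_i,\phi_i)$ so that it is simultaneously weight-decreasing \emph{and} has nonzero colimit, and then identifying an explicit functor (a pure functor $H^{\ca}$, or $R$-linear motivic cohomology in a fixed bidegree, computed on the tower) that witnesses $H(Y)=\inli_i H(Y_i)\ne 0$. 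This is exactly where the hypotheses ``$R$ not torsion'' and ``infinite transcendence degree'' must both be used: the former to keep the invariant nonzero along the tower (torsion coefficients would allow it to die), the latter to have infinitely many genuinely independent ``directions'' in which to push the weight down without running out of room. Once the tower and the witnessing functor are fixed, the remaining computations — verifying $Y_i\in\dmr_{\wchow\le -i}$ and computing the colimit of the invariant — are routine and I would not grind through them here.
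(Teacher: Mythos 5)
The first half of your argument (connectivity, compact generation, and the application of Theorem \ref{thegcomp}) is exactly what the paper does, and the overall skeleton of the second half — produce a tower $(Y_i,\phi_i)$ with $Y_i\in \dmr_{\wchow\le -i}$, take $Y=\hcl Y_i$, invoke Proposition \ref{ppcoprws}(\ref{icoprhcl}) for right degeneracy, and detect $Y\neq 0$ by computing morphisms from a compact object into the homotopy colimit — is also the paper's strategy. However, you stop precisely at the point that carries all the content: you never exhibit the tower, and the candidate you sketch (motives attached to a chain of subfields $k_0\subset k_1\subset\dots$ with transition maps induced by the inclusions) does not work, since such maps do not lower Chow weights at all; "raising the Tate twist depth" is the right instinct, but a twist is not a morphism, so one still has to say what the maps $Y_i\to Y_{i+1}$ are and why the colimit survives. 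Declaring this part "routine" is not justified — it is the theorem.

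The paper fills this gap by citing Lemma 2.4 of \cite{ayconj}: one takes $Y_i=R(i)[i]$, which lies in $\obj\chowr[-i]\subset \dmr_{\wchow\le -i}$, and the transition morphisms $\phi_i\colon R(i)[i]\to R(i+1)[i+1]$ are the classes of units $x_{i+1}\in k^{\times}$, using the identification of $\dmr(R(i)[i],R(i+1)[i+1])$ with $H^{1,1}\cong k^{\times}\otimes_{\z}R$; the $x_i$ are chosen algebraically independent over the prime subfield, which is exactly where the infinite transcendence degree hypothesis enters. Non-vanishing of $Y=\hcl Y_i$ is then detected by the compact unit object: $\dmr(R,Y)\cong \inli_i \dmr(R,R(i)[i])\cong \inli_i K^M_i(k)\otimes_{\z}R$, where the transition maps are multiplication by the symbols $\{x_{i+1}\}$, and the image of $1$ at the $i$th stage is the symbol $\{x_1,\dots,x_i\}$, which is non-torsion in Milnor $K$-theory because the $x_j$ are algebraically independent; the hypothesis that $R$ is not torsion guarantees this class does not die after tensoring with $R$, so the filtered colimit is nonzero and $Y\neq 0$. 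So the missing idea in your proposal is precisely this choice of tower (maps given by motivic cohomology classes of independent units rather than by field inclusions) together with the Milnor $K$-theory computation that witnesses $\dmr(R,Y)\neq 0$; without it the argument is a plan rather than a proof.
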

\begin{proof}
These properties of $\hu$ are rather well-known.
The objects of $\hu$ are compact in $\dmr$ essentially by the definition of the latter category; see Remark 4.11 of \cite{degmod}, \S1.1 of \cite{cdint}, or \S1.3 and 3.1 of \cite{bokum}.  In Remark 3.1.2 of \cite{bokum} it is shown that $\hu$ is 
connective in $\dmr$; this is an easy application of Corollary 6.7.3 of \cite{bev}. Lastly, $\dmr$ is well-known to coincide with its localizing subcategory generated by the subcategory $\dmgr$ of compact objects; see \S3.1 of \cite{bokum}. Hence it suffices to recall that $\dmgr$ is densely generated by $\hu$; see Proposition 2.2.1(1) of \cite{bsoscwhn} or  Theorem 2.2.1 of \cite{bzp}. Hence Theorem \ref{thegcomp}(\ref{itnp1}) gives the existence and the left non-degeneracy of  $\wchow$.

Now we demonstrate that 
$\dmr$ contains a right $\wchow$-degenerate object whenever $R$ is not torsion and $k$ is of infinite transcendence degree over its prime subfield. In this case Lemma 2.4 of \cite{ayconj} essentially gives a sequence of morphisms between objects $Y_i=R(i)[i]$ (for $i\ge 0$) such that the motif $Y=\hcl Y_i$ (see Proposition \ref{ppcoprws}(\ref{icoprhcl})) is not zero (note that in loc. cit. formally only the case $R=\q$ and $k\subset \com$ was considered; yet the simple proof of that lemma extends to our setting without any difficulty). Now,  $R(i)[i]\in \obj \chowr[-i]\subset \dmr{}_{\wchow\le -i}$; hence Proposition \ref{ppcoprws}(\ref{icoprhcl}) gives the statement in question.
\end{proof}

\begin{rema}\label{rwchow}
1. Clearly, this weight structure $\wchow$ is also compactly purely generated by any small essentially wide subcategory of $\chowr=\kar(\hu)$ of $R$-Chow motives (whereas the category $\chowr$ itself is essentially small); whence we call it a Chow weight structure (as well). 

2. Moreover, if $R$ is torsion but not zero then one can probably use a similar argument if $k$ is the perfect closure of a purely transcendental extension $k'$ of infinite degree  of some field $k''\subset k'$.\end{rema}

\subsection{On "variations" of weight Postnikov towers and weight complexes}\label{subtlety}

Now we study the question of  lifting $\wstu$-truncations and Postnikov towers of $t(M)$  to  that  of $M$; so we (try to) describe all possible choices of weight complexes for a fixed object of $\cu$ (see Corollary \ref{cenvel}(1)). These questions are  rather natural and actual even  though somewhat technical. 

As a consequence we obtain a new existence of weight structures statement.

\begin{pr}\label{piwcex}
Let $M\in \obj \cu$, $m\in \z$; assume that $\hw$ is Karoubian.

1. Then for a complex $C\in \obj K(\hw)$ there exists a choice of $w_{\le m}M$ such that $t(w_{\le m}M)\cong C$ if and only if $C$ is a choice of $\wstu_{\le m}(t(M))$ (note that this assumption  does not depend on the choice of $t(M)$; 
 see Remark \ref{rwc}(\ref{irwco})).

2. Assume that $N'=w_{\le m+1}M$. Then for any  $m$-$\wstu$-decomposition triangle
\begin{equation}\label{ekwd}
  C \stackrel{e}{\to} t(N')\stackrel{f}{\to} D\stackrel{g}{\to} C[1] \end{equation} 	(in $K(\hw))$), 
where $t(N')$ comes from a  weight Postnikov tower $Po_{N'}$ for $N'$, 	there exist 
certain $\pwcu$-morphisms  $Po_{N'}\stackrel{f_{\pwcu}}{\longrightarrow} Po_{\tilde D}\stackrel{g_{\pwcu}}{\longrightarrow} Po_{N[1]}$   satisfying the following conditions: the corresponding weight complex morphisms  $(f_{K(\hw)}, g_{K(\hw)})$ form a couple $K(\hw)$-isomorphic to $(f,g)$ (with the first "component" of this isomorphism being just $\id_{t(N')}$),   and the underlying $\cu$-morphisms  $N'\stackrel{f_{\cu}}{\to} {\tilde D}\stackrel{g_{\cu}}{\to} N[1]$ extend to a $\cu$-distinguished triangle such that the composed morphism $N\to N'\to M$ gives an $m$-weight decomposition of $M$.

3. Let $\bu$ be a full additive subcategory of $\hw$. Then the full subcategory $\cu^{\bu}$ of $\cu$ consisting of those $N\in \obj \cu$ such that $t(N)$ is $K(\hw)$-isomorphic to an object of 
 $K(\bu)$, is triangulated. Moreover, $w$ restricts (see Definition \ref{dwso}(\ref{idrest})) to $\cu^{\bu}$, and the heart of this restriction $w^{\bu}$ equals $\kar_{\cu}\bu\cong \kar(\bu)$.
\end{pr}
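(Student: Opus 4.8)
\textbf{Overall strategy.} All three parts should follow from careful bookkeeping with weight Postnikov towers together with the three ``rigidity'' facts already established: (a) every filtration completes to a Postnikov tower and morphisms of filtrations lift (Lemma \ref{lrwcomp}(2)); (b) the weight complex is well-defined up to homotopy and $t$ ``reflects'' $\wstu$-boundedness in the weight-bounded range (Proposition \ref{pwt}(\ref{iwpt3},\ref{iwcons},\ref{iwcex})); and (c) $w_{\le m}M$ is $\cu_{[\,\cdot,m]}$-controlled so one may cut towers. The assumption that $\hw$ is Karoubian is there precisely so that idempotents in $K(\hw)$ coming from chosen $\wstu$-decompositions can be realized by genuine direct-sum decompositions of terms, which is how one passes from ``$C$ is $K(\hw)$-isomorphic to $\wstu_{\le m}t(M)$'' back to an honest weight truncation.

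\textbf{Part 1.} The ``only if'' direction is immediate: if $N=w_{\le m}M$ then $t(N)\in K(\hw)_{\wstu\le m}$ by Proposition \ref{pwt}(\ref{iwcons}) (since $N\in\cu_{w\le m}$ is bounded above), and the weight decomposition triangle $N\to M\to w_{\ge m+1}M$ goes under $t$ to a distinguished triangle in $K(\hw)$ (Proposition \ref{pwt}(\ref{iwcex})) exhibiting $t(N)$ as a $\wstu_{\le m}$-truncation of $t(M)$; uniqueness of such truncations up to isomorphism (together with Remark \ref{rwc}(\ref{irwco})) gives $t(N)\cong C$ for the given $C$. Conversely, given $C$ a choice of $\wstu_{\le m}t(M)$, I would first produce \emph{some} weight truncation $N_0=w_{\le m}M$ with its weight Postnikov tower, note $t(N_0)\cong C$ in $K(\hw)$ by what was just said, and then transport the tower along this isomorphism using Proposition \ref{pwt}(\ref{iwc2342}): replacing the terms of the tower of $N_0$ by the (isomorphic, hence also in $\hw$ since $\hw$ is retraction-closed) terms of $C$ yields a weight Postnikov tower whose associated complex is literally $C$, and whose underlying object is still a valid $w_{\le m}M$.

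\textbf{Part 2.} This is the technical heart. Starting from the $m$-$\wstu$-decomposition triangle (\ref{ekwd}) of $t(N')$, I want to realize $f$ and $g$ by actual Postnikov-tower morphisms over $N'$. The plan: use Part 1 to get $N=w_{\le m}N'=w_{\le m}M$ with a weight Postnikov tower whose complex is (isomorphic to) $C$; the canonical map $N\to N'$ is a morphism of the underlying objects compatible with a morphism of filtrations (truncate the tower of $N'$), which by Lemma \ref{lrwcomp}(2) lifts to a $\pwcu$-morphism $Po_N\to Po_{N'}$, and shifting/coning this in $\pwcu$ (using that $\pwcu$-morphisms have cones built termwise from the TR-axioms, exactly as in the proof of Proposition \ref{pwt}(\ref{iwcex})) produces $Po_{N'}\to Po_{\tilde D}\to Po_{N[1]}$ whose underlying triangle $N'\to\tilde D\to N[1]$ is distinguished; here $\tilde D$ is forced to be (a shift of a cone of $N\to N'$, i.e.) $w_{\ge m+1}M[0]$-type object, and the composite $N\to N'\to M$ is an $m$-weight decomposition of $M$ by construction. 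Finally one checks the weight-complex morphisms $(f_{K(\hw)},g_{K(\hw)})$ agree with $(f,g)$ up to the asserted $K(\hw)$-isomorphism fixing $t(N')$: both are $m$-$\wstu$-decompositions of $t(N')$ extending the same data, so they are isomorphic, and Proposition \ref{pwt}(\ref{iwc2342}) again lets us adjust the tower of $\tilde D$ so the isomorphism is exactly the identity on $t(N')$. The main obstacle I anticipate is precisely this compatibility: ensuring the isomorphism of triangles can be chosen to be $\id_{t(N')}$ on the nose rather than merely up to homotopy, which is where Proposition \ref{pwt}(\ref{iwc2342}) is indispensable.

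\textbf{Part 3.} Closure of $\cu^{\bu}$ under shifts is clear from $t\circ[n]\cong[n]\circ t$ (Proposition \ref{pwt}(\ref{irwcsh})). For extensions: given a distinguished triangle $X\to Y\to Z$ with $X,Z\in\cu^{\bu}$, Proposition \ref{pwt}(\ref{iwcex}) gives a distinguished triangle $t(X)\to t(Y)\to t(Z)$ in $K(\hw)$ with $t(X),t(Z)$ isomorphic to complexes over $\bu$; a cone of a morphism between complexes over $\bu$ is again (homotopy equivalent to) a complex over $\bu$, so $t(Y)\in\cu^{\bu}$ — hence $\cu^{\bu}$ is triangulated. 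That $w$ restricts to $\cu^{\bu}$: given $N\in\obj\cu^{\bu}$, I would invoke Part 1 (or Proposition \ref{ppcoprws}-style tower arguments) to choose $w_{\le m}N$ and $w_{\ge m+1}N$ with weight complexes that are the stupid truncations of a complex over $\bu$, hence again over $\bu$, so these truncations lie in $\cu^{\bu}$; this verifies the weight-decomposition axiom for the restricted couple, the other axioms being inherited. Finally the heart: $\cu^{\bu}_{w=0}$ consists of $N\in\cu_{w=0}$ with $t(N)=N$ (concentrated in degree $0$) isomorphic to an object of $K(\bu)$; by Proposition \ref{pwt}(\ref{iwch}) such $N$ is a retract of a degree-$0$ term lying in $\bu$, so $N\in\kar_\cu\bu$, and conversely $\kar_\cu\bu\subset\cu^{\bu}_{w=0}$ trivially; since $\hw$ is Karoubian, $\kar_\cu\bu\cong\kar(\bu)$.
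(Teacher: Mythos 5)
There is a genuine gap, and it sits at the very heart of the statement. In your part 1 you produce \emph{some} truncation $N_0=w_{\le m}M$ and then claim $t(N_0)\cong C$ ``by what was just said''. What was just said only shows that $t(N_0)$ is \emph{one} choice of $\wstu_{\le m}(t(M))$; but weight truncations (unlike $t$-structure truncations) are not unique up to isomorphism, so there is no reason for $t(N_0)$ to be isomorphic to the prescribed $C$. For instance, for a nonzero $M\in\cu_{w=0}$ both $0$ and $M[-1]$ are legitimate choices of $\wstu_{\le -1}(t(M))$. Realizing an \emph{arbitrary} prescribed truncation $C$ by a suitable choice of $w_{\le m}M$ is precisely the content of the proposition, so your argument assumes what has to be proved. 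The subsequent appeal to Proposition \ref{pwt}(\ref{iwc2342}) is also a misuse: that statement only lets one replace the chain-map representative of $t(g)$ by a homotopic one \emph{within a fixed pair of Postnikov towers}; it does not permit ``transporting a tower along a $K(\hw)$-isomorphism of complexes'' --- whether every complex $K(\hw)$-isomorphic to $t(M)$ is realized by a weight Postnikov tower is exactly what Remark \ref{realiz} leaves open (and it fails without Karoubianness). Your part 2 then quotes part 1, inverting the paper's logical order (the paper proves part 2 directly and deduces the converse half of part 1 from it), and it never constructs the $\cu$-morphism $f_{\cu}:N'\to\tilde D$ out of the given $K(\hw)$-morphism $f$; the compatibility ``$(f_{K(\hw)},g_{K(\hw)})\cong(f,g)$ with identity on $t(N')$'', which you yourself flag as the main obstacle, is left unproved, and \ref{iwc2342} does not supply it.

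For comparison, the paper's proof of part 2 (after reducing to $m=-1$) observes that $D\in K(\hw)_{\wstu=0}$, uses the Karoubianness of $\hw$ to replace $D$ by an object $\tilde D$ of $\hw\subset\cu$, and then converts $f$ into an honest $\cu$-morphism $f_{\cu}:N'\to\tilde D$ by factoring it through the truncation morphism $N'\to w_{\ge 0}N'$ (Proposition \ref{pbw}(\ref{ifact}), with $w_{\ge 0}N'$ also serving as $\wstu_{\ge 0}t(N')$); taking $N$ to be a ``fibre'' of $f_{\cu}$, Proposition \ref{pwt}(\ref{iwcex},\ref{iwcons}) gives $t(N)\cong C$ and $N\in\cu_{w\le -1}$, and an octahedron shows that $N\to N'\to M$ is a weight decomposition of $M$. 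This construction of $f_{\cu}$ from $f$ is the step your sketch is missing. Finally, in part 3 the inclusion $\kar_{\cu}\bu\subset \hw^{\bu}$ is not ``trivial'': a retract $M$ of $B\in\obj\bu$ given by an idempotent $p$ is in general not an object of $\bu$, and one must exhibit a weight complex of $M$ with terms in $\bu$, e.g.\ $0\to B\stackrel{\id_B-p}{\to}B\stackrel{p}{\to}B\stackrel{\id_B-p}{\to}\dots$, as the paper does; your other inclusion, via Proposition \ref{pwt}(\ref{iwch}), agrees with the paper.
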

\begin{proof}
1. Applying   Proposition \ref{pwt}(\ref{iwcex})  to any $m$-weight decomposition of $M$ (see Remark \ref{rstws}(2)) we obtain the existence of a $K(\hw)$-distinguished triangle $t(w_{\le m}M)\to t(M)\to t(w_{\ge m+1}M)\to t(w_{\le m}M)[1]$. Next, $t(w_{\le m}M)\in K(\hw)_{\wstu\le m}$ and $t(w_{\ge m+1}M)\in K(\hw)_{\wstu\ge m+1}$ according to Proposition \ref{pwt}(\ref{iwcons}); hence $t(w_{\le m}M)$ is a choice of $\wstu_{\le m}(t(M))$.

	Now we prove the converse implication, i.e., that for any $C=\wstu_{\le m}(t(M))$ there exists $N=w_{\le m}M$ such that $t(N)\cong C$. We take $N'$ to be a choice of $w_{\le m+1}M$; then the complex $C'=t(N')$ (as we have just proved)  is a choice of $\wstu_{\le m+1}(t(M))$. So we   take $f\in K(\hw)(C,C')$ to be 
	 a morphism "compatible with $\id_{t(M)}$" via Proposition \ref{pbw}(\ref{icompl}).
	
	Applying Proposition \ref{pwt}(\ref{iwpt1}) 	 we obtain that $f$ can be completed to a distinguished triangle of the form (\ref{ekwd}). 	Thus the implication in question reduces to assertion 2.

2.	We can certainly assume  $m=-1$. The idea is to "truncate" $N'$ to obtain $N$.

		The aforementioned Proposition \ref{pwt}(\ref{iwpt1}) actually implies that 	$D\in K(\hw)_{\wstu=0}$. Since $\hw$ is Karoubian, we can assume $D\in \obj \hw$; so we set ${\tilde D}=D$. Let us make a choice of $N'{}^0=w_{\ge 0}N'$; it belongs to $\cu_{w=0}$ according to 	Proposition \ref{pbw}(\ref{iwd0}) and arguing is above we obtain that $N'{}^0$ is also a choice of 	$\wstu_{\ge 0} C'$. 	 According to Proposition \ref{pbw}(\ref{ifact}) the morphism $f $ factors through the weight truncation morphism $N'\to N'{}^0$. So we set 		$f_{\cu}$ to be the corresponding composition $N'\to N'{}^0\to D$ and complete this morphism to a distinguished triangle $N 
		{\to}  N'\stackrel{f_{\cu}}{\to} 
		D \stackrel{g_{\cu}}{\to} N[1]$.
	Next we 	apply Proposition \ref{pwt}(\ref{iwcex}) to the couple $(f_{\cu},g_{\cu})$ to 	obtain 	 a choice of morphisms 	$Po_{N'}\stackrel{f_{\pwcu}}{\longrightarrow} Po_{\tilde D}\stackrel{g_{\pwcu}}{\longrightarrow} Po_{N[1]}$   such that $Po_{\tilde D}$ is a weight Postnikov tower for $D$. 
	
				Proposition \ref{pwt}(\ref{iwcex}) also says that the corresponding couple $(f_{K(\hw)}, g_{K(\hw)})$ can be completed to a distinguished triangle. Since $D\in K(\hw)_{\wstu=0}$, the arrow $f_{K(\hw)}$ is $K(\hw)$-isomorphic to  $f$ (since non-zero morphisms in $K(\hw)(N,D)$ do not vanish in $\kw(\hw)(N,D)$). 
		Thus $t(N)\cong C$ and $(f_{K(\hw)}, g_{K(\hw)})\cong (f,g)$. Applying Proposition \ref{pwt}(\ref{iwcons}) we obtain that $N\in\cu_{w\le -1}$.\footnote{Here we use the fact that $N$ belongs to $ \cu_{w\le 0}$ that is immediate from Proposition \ref{pbw}(\ref{iext}).}		Lastly, for a cone $P$ of the composed morphism $N\to N'\to M$ we have a distinguished triangle $D\to P\to w_{\ge 1}M\to D[1]$ (by the octahedral axiom); hence $P\in \cu_{w\ge 0}$, and  therefore $N=w_{\le -1}M$.

3.  Proposition \ref{pwt}(\ref{iwhecat},\ref{iwcex})  clearly implies that $\cu^{\bu}$ is a  triangulated subcategory of $\cu$. Next, to prove that $w$ restricts to  $\cu^{\bu}$ it obviously suffices to verify that for any object $M$ of $\cu^{\bu}$ there exists a choice of $w_{\le 0}M$ that belongs to $\obj \cu^{\bu}$ as well. The latter statement follows immediately from the definition of $ \cu^{\bu}$ along with  assertion 1. 

Let us now  calculate $\hw^{\bu}$. If $M$ is the image of an idempotent endomorphism $p:B\to B$ for $B\in \obj \bu$ then we clearly can take $t(M)=\dots\to 0\to B\stackrel{\id_B-p}{\to} B\stackrel{p}{\to} B \stackrel{\id_B-p}{\to} B\to \dots$ (the 
first $B$ is in degree $0$). 
  Hence $\hw^{\bu}$ contains $\kar_{\cu}(\bu)\cong \kar(\bu)$.
 
Conversely, let $M$ belong to the heart of  $w^{\bu}$. Then we choose $t(M)\in \obj K(\bu)$, and  it  remains to recall that $M$ is a retract of the object $M^0$ according to Proposition \ref{pwt}(\ref{iwch}). 
\end{proof}

\begin{rema}\label{riwcex}
Adopt the assumptions of Proposition \ref{piwcex}(3).

1.  Let us describe a funny example to  this statement.  According to  Proposition \ref{paydegen}  there exists a left non-degenerate   weight structure $\wchow$ on 
  $\dmr$ whose heart is the coproductive hull of $\chowr$ in this category (see Remark \ref{rwchow}(1) and Theorem \ref{thegcomp}(\ref{itnp1},\ref{itnp1d})).  Thus we can take $\bu=\chowr$ in our proposition and consider the corresponding subcategory $\dmr^{\chowr}\subset \dmr$ consisting of motives whose weight complexes are complexes of Chow motives 
	 (and not of retracts of their small coproducts). According to part 3 of our proposition,  this "big" $\wchow$ on $\dmr$ restricts to a weight structure on $\dmr^{\chowr}$  whose heart is equivalent just to $\chowr$.

Next we consider  the functor $F':-\otimes \omp:\da\to \cu'$ (as mentioned in Remark \ref{rayoub}(2)). According to  Theorem \ref{thegcomp}(\ref{itnp1},\ref{itnwe})  there also exists a weight structure on $\cu'$ such that  $F'$ is weight-exact (here we use the negativity statement mentioned in Remark \ref{rayoub}(2)). Thus one may apply Theorem \ref{twcons}  to  the restriction  $F$ of $F'$ to $\da^{\chowk}$ (so, we take $R=k$, where $k$ is our characteristic $0$ base field) as well; note that $\da^{\chowk}$ is obviously bigger than $\dmgk$ (look at $\coprod_{i\in \z}N[i]$ for any non-zero $N\in \obj \chowk$).

 Moreover, it appears (cf. Remark \ref{rayoub}(3)) that combining Theorem II of  \cite{ayoubcon}  with Theorem \ref{twcons}(3) one can 
 deduce that all $\wchow$-bounded below objects of $\da^{\chowk}$ whose de Rham cohomology is zero are right $\wchow$-degenerate.

2. Actually, we would have lost no information if we had assumed that $\bu$ is retraction-closed in $\hw$ in Proposition \ref{riwcex}(3).

Indeed, let us demonstrate that $\cu^{\bu}=\cu^{\kar_{\cu}\bu}$. The latter assertion can be deduced from Remark 1.12(4) of \cite{neederex}; it is obviously equivalent to the 
 fact that any object $C$ of $K(\kar_{\cu}\bu)$ is isomorphic to an object of its full subcategory  $K(\bu)$.
Since $K(\bu)$ is a full triangulated subcategory of $K(\kar_{\cu}\bu)$,  it suffices to consider the case where $C$ is bounded either above or below. Moreover, applying duality 
 we reduce the statement to the case where $C=(C^i)$ is concentrated in non-negative degrees. Now we present each $C^i$ as the image of an endomorphism $p^i$ of some $B^i\in \obj \du$ and consider the morphisms $e^{i}:B^{i}\to B^{i+1}$ that factor through the boundaries $d^{i}:C^{i}\to C^{i+1}$ (cf. the definition of $\kar(B)$ in \S\ref{snotata}). Then it is easily seen that the totalization of the double complex 
$$\begin{CD}
 B^0@>{\id_{B^0}-p^0}>> B^0 @>{p^0}>> B^0 @>{\id_{B^0}-p^0}>> B^0 @>{p^0}>> \dots\\
@VV{e^0}V@VV{0}V@VV{0}V @VV{0}V \\
B^1@>{\id_{B^1}-p^1}>> B^1 @>{p^1}>> B^1 @>{\id_{B^1}-p^1}>> B^1 @>{p^1}>> \dots\\
@VV{e^1}V@VV{0}V@VV{0}V @VV{0}V \\
B^2@>{\id_{B^2}-p^2}>> B^2 @>{p^2}>> B^2 @>{\id_{B^2}-p^2}>> B^2 @>{-p^2}>> \dots\\
@VV{e^2}V@VV{0}V@VV{0}V @VV{0}V \\
\dots@>{}>>\dots @>{}>>\dots @>{}>>\dots
\end{CD}$$
is homotopy equivalent to $C$ (in $K(\kar_{\cu}\bu)$).

3. Now let us assume that $\bu$ is retraction-closed in $\hw$ (though this condition is not really important, as we have just demonstrated),  and suppose that $\cu'$ is a full triangulated subcategory of $\cu$ such that $w$ restricts to it and the heart of this restriction lies in  $\bu$. Then any object of $\cu'$ clearly possesses a $w$-weight complex
whose terms are objects of $\bu$. Thus $\cu'$ is a subcategory of $\cu^{\bu}$, and we obtain that $\cu^{\bu}$ is the maximal subcategory of $\cu$ such that $w$ restricts to it and the heart of this restriction is $\bu$. 

Note in contrast that $w$ restricts to the subcategory of $\cu$  strongly generated by $\bu$ and the heart of this restriction equals $\bu$ according to Proposition \ref{pexw}(1) (combined with Proposition \ref{pbw}(\ref{igenlm})); this restriction of $w$ is essentially  minimal  among the restrictions whose heart equals $\bu$.

\end{rema}

Now we prove a corollary that is actual for \cite{bsoscwhn}.

\begin{coro}\label{cenvel}
Assume that $M\in \obj \cu$ and $t(M)$ is homotopy equivalent to a complex $(M'{}^i)\in \obj K(\hw)$.

1. Assume that $M\in \cu_{w\le 0}$ and $M'{}^i=0$ for $i<0$. Then 
there exists a weight Postnikov tower for $M$ such that the corresponding weight complex $t'(M)$ equals $(M'{}^i)$.

2. Assume that $M$ is $w$-bounded and  the complex $(M'{}^i)$ is bounded. Then $M$ belongs to the $\cu$-extension closure of  $M'{}^i[-i]$.
\end{coro}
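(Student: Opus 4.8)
The plan is to reduce both parts to the machinery of Proposition \ref{piwcex} together with the boundedness results of Lemma \ref{lrwcomp}(3). For part 1, the idea is to build a weight Postnikov tower for $M$ whose associated complex is literally $(M'{}^i)$, working from the bottom up. Since $M\in \cu_{w\le 0}$, we may take $w_{\le i}M=M$ for all $i\ge 0$, so the "interesting" part of the tower is in degrees $\le 0$; the complex $(M'{}^i)$ being concentrated in degrees $i\ge 0$ means $M'{}^i[-i]$ has weight $0$ and the tower terms $M_{-p}=M^p[-p]$ should be $M'{}^p[-p]$. First I would assume $\hw$ is Karoubian without loss of generality — or rather, observe that the argument of Proposition \ref{piwcex} only uses Karoubianness to replace $D\in K(\hw)_{\wstu=0}$ by an object of $\hw$, which here is already guaranteed since $M'{}^i\in\obj\hw$ by hypothesis. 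Then I would apply Proposition \ref{piwcex}(1) inductively: since $(M'{}^i)$ is homotopy equivalent to $t(M)$ and is concentrated in degrees $\ge 0$, the stupid truncation $\wstu_{\le m}(M'{}^i)$ is (for $m\ge 0$) homotopy equivalent to $\wstu_{\le m}(t(M))$, so there is a choice of $w_{\le m}M$ with $t(w_{\le m}M)\cong \wstu_{\le m}(M'{}^i)$; the point is to make these choices compatibly, which is exactly what Proposition \ref{piwcex}(2) delivers at each step of the descending induction on $m$ (passing from $w_{\le m+1}M$ to $w_{\le m}M$ via the $m$-$\wstu$-decomposition triangle of $(M'{}^i)$). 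Assembling these weight truncations and the connecting triangles produces a weight Postnikov tower for $M$; by construction the $p$-th term of the associated complex is $M'{}^p$, so $t'(M)=(M'{}^i)$.

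For part 2, the plan is more direct: the complex $(M'{}^i)$ being bounded means the corresponding weight Postnikov tower is bounded in the sense of Definition \ref{dfilt}. More precisely, since $M$ is $w$-bounded, say $M\in\cu_{[a,b]}$, Proposition \ref{pwt}(\ref{irwcons}) lets us choose a weight Postnikov tower with $M_{\le i}=w_{\le i}M=0$ for $i<a$ and $M_{\le i}=M$ for $i\ge b$, so that $t(M)$ has terms $M^i=0$ for $i\notin[-b,-a]$; thus $(M'{}^i)$, being homotopy equivalent to such a complex, is itself homotopy equivalent to one concentrated in $[-b,-a]$, hence $M'{}^i[-i]\in\cu_{w=0}$ and vanishes outside that range (up to the equivalence). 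Now I would invoke Lemma \ref{lrwcomp}(3): a bounded filtration exhibits $M$ in the extension-closure of the terms $M_i=M^i[-i]$ of its Postnikov tower. Combining this with the fact (Proposition \ref{pwt}(\ref{iwhecat}) and Remark \ref{rwc}(\ref{irwco})) that $t(M)$ and $(M'{}^i)$ are homotopy equivalent, hence $M^i$ and $M'{}^i$ differ by terms that split off as contractible summands, one concludes that $M$ lies in the $\cu$-extension closure of the $M'{}^i[-i]$.

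The main obstacle is the compatibility bookkeeping in part 1: ensuring that the inductively chosen $w_{\le m}M$ fit into a genuine Postnikov tower (not just a family of weight decompositions) with the prescribed associated complex. This is where Proposition \ref{piwcex}(2) does the real work — it is precisely the statement that a chosen $m$-$\wstu$-decomposition triangle of $t(N')$ can be realized by $\pwcu$-morphisms lifting to a $\cu$-distinguished triangle that is an $m$-weight decomposition of $M$ — so the argument amounts to iterating it and checking that the weight complex of the resulting tower is $(M'{}^i)$ on the nose rather than merely up to homotopy. For part 2 the only subtlety is matching $M^i[-i]$ with $M'{}^i[-i]$: a homotopy equivalence of bounded complexes over the additive category $\hw$ differs from an isomorphism by contractible complexes, so the extension-closure of one set of terms contains the other; this is routine given Lemma \ref{lrwcomp}(1,3).
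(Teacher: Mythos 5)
Your part 1 follows essentially the same route as the paper: a descending induction on weight truncations powered by Proposition \ref{piwcex}(2), fed with the stupid truncation triangles of $(M'{}^i)$. The one genuine variation is how you handle the Karoubian hypothesis: the paper does not argue it away, but instead embeds $(\cu,w)$ into a weighted category with Karoubian heart (via \cite{bonspkar}), runs the induction there, and then checks by a downward induction that all the truncations $M_{\le i}$ lie in $\cu$. Your observation that the proof of Proposition \ref{piwcex}(2) uses Karoubianness only to replace $D\in K(\hw)_{\wstu=0}$ by an object of $\hw$, which in your application is literally the term $M'{}^{-i}$ placed in a single degree, is accurate; just note that you are then re-running the proof of that proposition rather than citing its statement, and the identification of the resulting complex with $(M'{}^i)$ \emph{on the nose} (which the paper secures by demanding the $K(\hw)$-images of the connecting morphisms be isomorphic to the prescribed ones and using that $t$ is fully faithful on lifts of $\hw$) still has to be carried out rather than asserted.

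Part 2, however, has a genuine gap at the final step. Lemma \ref{lrwcomp}(3), applied to a bounded tower chosen via Proposition \ref{pwt}(\ref{irwcons}), places $M$ in the extension closure of the terms $M^i[-i]$ of \emph{that} tower, not of the prescribed $M'{}^i[-i]$, and the passage between the two families is not justified by ``homotopy equivalent bounded complexes differ by contractible summands''. That splitting statement is itself true (one has $t(M)\oplus\co(f)\cong (M'{}^i)\oplus\co(\id_{t(M)})$ as complexes for a homotopy equivalence $f$), but the contractible summand appearing on the $(M'{}^i)$-side is built from the terms of $t(M)$, so it yields no containment of the $M^i$ in the extension closure (or even the retract closure) of the $M'{}^j$: for instance $t(M)$ may contain a contractible summand $A\stackrel{\id}{\to}A$ whose term $A$ is unrelated to the $M'{}^j$, and extension closures are not closed under retracts. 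The paper closes exactly this point by \emph{using part 1}: after shifting so that $(M'{}^i)$ is concentrated in non-negative degrees (whence $M\in\cu_{w\le 0}$ by Proposition \ref{pwt}(\ref{iwcons})), part 1 produces a weight Postnikov tower of $M$ whose associated complex equals $(M'{}^i)$; one then checks that this tower is bounded --- the truncations $M_{\le i}$ for $i$ small have vanishing weight complex and are bounded above, hence vanish by Proposition \ref{pwt}(\ref{iwcons}) together with Proposition \ref{pbw}(\ref{ibond}) --- and only then invokes Lemma \ref{lrwcomp}(3), so that the subquotients of the filtration are literally the $M'{}^i[-i]$. Your argument should be repaired by routing part 2 through part 1 in this way; the direct comparison of terms cannot be made to work.
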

\begin{proof}
1. We set $M_{\le i}=M$ for $i\ge 0$. According to Theorem 2.2.2(I.1,III.1) of \cite{bonspkar}, there exists a triangulated category $\cu'$ endowed with a weight structure $w'$ such that $\cu\subset \cu'$, $w$ is the restriction of $w'$ to $\cu$, and that the category $\hw'$ is Karoubian. Moreover, we can certainly assume that $\cu$ is a strict subcategory of $\cu'$.

 Now we apply Proposition \ref{piwcex}(2) to $\cu'$ repetitively starting from $M_{\le 0}=M$ to obtain a $w'$-Postnikov tower for $M$ such that $M^i=M'{}^i$ for all $i\in \z$. Being more precise, we set  $M_{\le i}=M$ for $i\ge 0$, choose a  
 $w'$-Postnikov tower for $M=M_{\le 0}$, 
 and starting from $i=0$ on each step we take a $w'$-Postnikov tower $Po_{M_{\le i}}$ constructed earlier and construct a distinguished triangle $M_{\le i}\stackrel{b_i}{\to} M'{}^{-i}[i]\stackrel{c_i}{\to} M_{\le i-1}[1]\to M_{\le i}[1]$ along with a lift of $(b_i,c_i)$ to  $\operatorname{Post}_{w'}(\cu')$; we demand $(b_{i,K(\hw)}, c_{i,K(\hw)})$ to be isomorphic to the corresponding morphisms in the tower corresponding to the complex $(M'{}^i)$. These compatibilities of choices (of  $w'$-Postnikov towers) allow us to  compute $t(d'^{-i}_{M}): M'{}^{-i}\to  M'{}^{1-i}$ as  $b_{i,K(\hw)}[1-i]\circ c_{i,K(\hw)}[-i]$. Since the restriction of $t$ to the lifts to $\cuw$ of $\hw$ is a full embedding, we obtain that the boundary morphisms in this $t'(M)$ as the same as that for $M'{}^i$ if $i\ge 0$, and clearly $M'{}^i=0$ for $i<0$. 

It remains to check that this $w'$-Postnikov tower for $M$ is simultaneously a $w$-one. Since $w$ is the restriction of $w'$ to $\cu$ and $M'{}^i\in \obj \cu$, for this purpose it suffices to verify that all $M_{\le i}$ are objects of $\cu$ as well; the latter fact is given by an  obvious downward induction (if $i<0$; whereas for $i\ge 0$ we have $M_{\le i}=M\in \obj \cu$). 

2. We can certainly assume that $M'{}^i=0$ for $i<0$ and $M\in \cu_{w\ge 0}$.

Let us take some weight Postnikov tower for $M$ such that the corresponding complex $t'(M)$ equals $M'{}^i$ (as provided by the previous assertion). Assume that $M'{}^i=0$ for $i\ge j$. Then the corresponding weight complex $t(M_{\le -j})$ is zero, and Proposition \ref{pwt}(\ref{iwcons}) implies that $M_{\le -j}=0$. Hence this Postnikov tower is bounded (see Definition \ref{dfilt}(1)); thus $M$ belongs to the $\cu$-extension closure of  $M'{}^i[-i]$ by 
 Lemma \ref{lrwcomp}(3). 
\end{proof}

\begin{rema}\label{realiz}
1. The author suspects that a more detailed treatment of weight Postnikov towers would give a complete description of those (not necessarily bounded from any side) $(M'{}^i)\in \obj K(\hw)$ that come from  $w$-Postnikov towers for $M$. Most probably, any $(M'{}^i)$ that is $K(\hw)$-isomorphic to $t(M)$ can be "realized" this way if $\hw$ is Karoubian.

It appears that this statement is easier to prove whenever there exists a "lift" of $t$ to an exact functor  $t^{st}:\cu\to K(\hw)$
(cf. Remark \ref{rwc}(\ref{irwc7})). Indeed, then one can lift the stupid truncations of $(M'{}^i)$ via $t^{st}$ to weight truncations $w_{\le j}M$ (using the corresponding minor modification of Proposition \ref{piwcex}(2));  connecting these objects via Proposition \ref{pbw}(\ref{icompl}) gives a weight Postnikov tower as desired.

2. Note however that  it is important to assume that $\hw$ is Karoubian. Indeed, if $p$ is an idempotent endomorphism of $B\in \obj \hw$ that does not give an $\hw$-splitting then the $2$-periodic complex $\dots \to B\stackrel{\id_B-p}{\longrightarrow} B\stackrel{p}{\to} B \stackrel{\id_B-p}{\longrightarrow} B\to \dots$ does not come from a weight Postnikov tower for $M$ since the corresponding $M_{\le 0}$ is a retract of $B$ that cannot belong to $\obj \cu$.

This example demonstrates that the "lifting" questions treated in this section are not   trivial.
\end{rema}

\section{"Topological" examples}\label{stop}

In \S\ref{shg} we prove that in the equivariant stable homotopy category $\shg$ of $G$-spectra there exists a weight structure $\wg$ generated by the stable orbit subcategory (that consists of the spectra of the form $S_H^0$, 
 where $H$ is a closed subgroup of $G$, i.e., $\wg$ is purely compactly generated by equivariant spheres). Applying the previous results to $\wg$ we obtain that this weight structure is closely related to the connectivity of spectra,  and the corresponding pure cohomology is  the Bredon one  (coming from Mackey functors). 

Our section \ref{shtop} is dedicated to the detailed study of the case $G=\{e\}$, i.e., of the stable homotopy category $\shtop$ (along with the corresponding weight structure $\wsp$). We prove that $\wsp$-Postnikov towers are the {\it cellular} ones in the sense of \cite{marg} (cf. Remark \ref{rshg}(\ref{irgcell}) for a conjectural generalization of this statement). 

In \S\ref{spost} we discuss the relation of our results to $t$-structures (that are {\it adjacent} to the corresponding weight structures) along with certain results of \cite{axstab}; we do not prove anything new in this section.

\subsection{On equivariant spherical weight structures and Mackey functors}\label{shg}

Now let us apply the general theory to the study of equivariant stable homotopy categories. Let us list some notation and definitions related to this matter.

\begin{itemize}
\item $G$ 
 is a (fixed) compact Lie group; we will write $\shg$ for the stable homotopy category of $G$-spectra indexed by a complete $G$-universe.

\item We take $\cp$ to be the set of spectra of the form $S_H^0$, where $H$ is a closed subgroup of $G$ (cf. Definition I.4.3 of \cite{lms}; recall that $S_H^0$ is constructed starting from the $G$-space $G/H$). We will use the notation $\hu$ for the corresponding (preadditive) subcategory of $\shg$; so, it is the (stable) {\it  orbit category} of ibid. 

Recall also that $S_H^0[n]$ is the corresponding sphere spectrum $S_H^n$ essentially by definition (see loc. cit.). 

\item The equivariant homotopy groups of an object $E$ of $\shg$ are defined as $\pi_n^H(E)=\shg(S_H^n,E)$ (for all $n\in \z$; see \S I.6  and Definition I.4.4(i) of ibid.). 

\item We will write $\emg$ for the full subcategory of $\shg$ whose object class is $(\cup_{i\in \z\setminus \ns} \cp[i])^{\perp}$ (its objects are the {\it Eilenberg-MacLane} $G$-spectra; see \S XIII.4 of \cite{mayeq}).

\item We 
 write $\macg$ for the category of   additive contravariant functors from $\hu$ into $\ab$ (cf. Theorem \ref{thegcomp}(\ref{itnp3})); its objects are the {\it Mackey functors} in the sense of loc. cit. Respectively, $\cacp$ will denote the Yoneda embedding $\hu\to \macg$. 

Recall here that for any Mackey functor $M$ the corresponding (Bredon cohomology) functor $H_G^0(-,M)$ is represented by some Eilenberg-MacLane $G$-spectrum (see loc. cit.).
\end{itemize}

Now we  relate the theory of weight structures (cf. Definitions \ref{dwstr}, \ref{dwso}(\ref{idh},\ref{idrest}), \ref{dpure}, and \ref{dbh}) to $\shg$. 

\begin{theo}\label{tshg}
 The following statements are valid.

\begin{enumerate}
\item\label{itgfine}
The category $\cu=\shg$ and the class  $\cp$ specified above  satisfy the assumptions of Theorem \ref{thegcomp}. Thus $\cp$ gives a a left non-degenerate smashing weight structure $\wg$ on $\shg$ whose heart  is the coproductive hull of $\hu$ in $\shg$ and is equivalent to the  formal coproductive hull of $\hu$. 

\item\label{itgrest} $\wg$ restricts to the subcategory of compact objects of $\shg$; the heart of this restriction $\wgfin$ is the Karoubi envelope of the class of finite coproducts of elements of $\cp$.

\item\label{itgconn}
Let $n\in \z$. Then the class of $n-1$-connected spectra (see Definition I.4.4(iii) of \cite{lms}; i.e., this is the class $(\cup_{i<n}\cp[i])\perpp$)) coincides with $\shg_{\wg\ge n}$. In particular, $\shg_{\wg\ge 0}$ is the class of {\it connective} spectra, and $\wg$-bounded below objects are the  bounded below spectra of loc. cit.

Moreover, $\shg_{\wg\ge n}$ is the smallest class of objects of $\shg$ that contains $\cp[i]$ for $i\ge n$ and is closed with respect to coproducts and extensions.

\item\label{itgpure} A (co)homological functor from $\shg$ into an  AB4 (resp. AB4*)  abelian category $\au$ that respects coproducts (resp. converts them into products) is $\wg$-pure if and only if it kills $\cup_{i\neq 0}\cp[i]$. 

In particular, all Eilenberg-MacLane $G$-spectra represent $\wg$-pure functors.

\item\label{itgcacp} The pure homological functor $H^{\cacp}$ (see Theorem \ref{tpure} and Proposition \ref{pgcomp}) is the 
equivariant   ordinary homology with Burnside ring coefficients   functor $H^G_0$ considered in \cite{lewishur} (cf. also Definition X.4.1 of \cite{mayeq}),  and for any Mackey functor $M$ the corresponding pure functor $H_M$ (see Remark \ref{rpuresd}(1)) coincides with $H_G^0(-,M)$  in Definition X.4.2 and \S XIII.4 of ibid. 

\item\label{itgemg} The category $\emg$ is naturally isomorphic to $\macg$ (in the obvious way); thus $\emg$ is Grothendieck abelian and has an injective cogenerator $I$.

\item\label{itgneg} $ \shg_{\wg\le 0}$  is the smallest subclass of $\obj \shg$ that is closed with respect to coproducts, extensions, and contains $\cp[i]$ for $i\le 0$. This class also equals  $   \perpp \shg_{\wg\ge 1}$;\footnote{Recall here $\shg_{\wg\ge 1}$ is the class of $0$-connected $G$-spectra.} 
 moreover, it is annihilated by $H_i$ for all $i>0$ and 
 every pure homological functor $H$ from $\shg$.
\end{enumerate}
\end{theo}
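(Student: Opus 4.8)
Theorem \ref{tshg} is almost entirely an instantiation of the general machinery (Theorem \ref{thegcomp}) applied to $\cu = \shg$ with the generating class $\cp = \{S^0_H\}$, so the bulk of the work is checking that the hypotheses of Theorem \ref{thegcomp} hold and then translating the conclusions into the equivariant-homotopy language of \cite{lms}, \cite{mayeq}, \cite{lewishur}. First I would verify the hypotheses: that $\shg$ is smashing (standard for stable homotopy categories), that each $S^0_H$ is a compact object (this is classical — the orbit spectra are the compact generators of $\shg$, e.g.\ from \cite{lms}), and that $\hu$ is connective in $\shg$, i.e.\ $\shg(S^0_H, S^0_{H'}[i]) = 0$ for $i>0$. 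The last point is the one genuine input from equivariant topology: by adjunction $\shg(S^0_H, S^0_{H'}[i]) = \pi_i^H(S^0_{H'})$, which is a stable homotopy group of the suspension spectrum of the finite $G$-CW complex $G/H_+ \wedge (G/H')$-type object, and these vanish in negative degrees (equivalently positive degrees after the usual sign flip of the homological convention) because suspension spectra of $G$-spaces are connective. Finally, $\cp$ is a set that compactly generates $\shg$ — this is the classical statement that the orbit spectra generate the equivariant stable category.

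Once the hypotheses are in place, parts \ref{itgfine}, \ref{itgrest}, \ref{itgpure}, \ref{itgneg} are immediate from Theorem \ref{thegcomp}(\ref{itnp1},\ref{itnp1d},\ref{itnp2},\ref{itnpr}), Proposition \ref{ppcoprws}(\ref{icopr5},\ref{icopr5p}) and Remark \ref{rgensmash}(\ref{irspure}) (for the homological variant of purity), together with Definition \ref{dbh} for the description of the heart. For part \ref{itgconn} I would identify the "$n-1$-connected" class of \cite[Def.\ I.4.4(iii)]{lms} with $(\cup_{i<n}\cp[i])^{\perp}$ — this is essentially the definition of connectivity via vanishing of homotopy groups $\pi_j^H$ for $j<n$ and all closed $H\le G$ — and then invoke Theorem \ref{thegcomp}(\ref{itnp1d}), which says exactly that this orthogonal complement equals $\shg_{\wg\ge n}$, along with the "smallest class closed under coproducts and extensions" description from the same part. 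Part \ref{itgemg}: the category $\emg$ is by our definition the full subcategory on $(\cup_{i\ne0}\cp[i])^{\perp}$, and Theorem \ref{thegcomp}(\ref{itnp3}) identifies the category $\hrt$ of pure representable functors with $\aucp = \macg$; since an object of $\emg$ is precisely one representing a pure functor, $\emg \simeq \macg$, which is Grothendieck abelian with an injective cogenerator by the same part. The classical identification of $\emg$ with Mackey functors in \cite[\S XIII.4]{mayeq} matches this.

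For part \ref{itgcacp} I would argue that $H^{\cacp}$ and $H_M$ are pure functors whose restrictions to $\hu$ agree with those of the equivariant ordinary homology $H^G_0$ of \cite{lewishur} and the Bredon cohomology $H^0_G(-,M)$ of \cite[\S XIII.4]{mayeq}, respectively; then Remark \ref{rgensmash}(\ref{irspure}) (two pure functors respecting/​converting coproducts that agree on $\hu$ are isomorphic) forces the global identification. Concretely: $H^{\cacp}$ sends $P = S^0_{H'}$ to the representable functor $\cacp(S^0_{H'}) = (\, S^0_H \mapsto \shg(S^0_H, S^0_{H'})\,)$, which is the Burnside-ring-valued Mackey functor represented by $G/H'$, matching the value of the ordinary homology theory with Burnside ring coefficients on the orbit; and $H_M$ restricted to $\hu$ is $M$ itself by the Yoneda lemma, matching Bredon cohomology by construction. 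The one subtlety to handle carefully is the bookkeeping of homological versus cohomological indexing and the sign convention of Remark \ref{rstws}(3), plus confirming that $H^G_*$ as defined in \cite{lewishur} actually respects coproducts (it is represented, hence does) — this comparison of two concretely-defined homology theories via their coincidence on a generating set is, I expect, the main (though not deep) obstacle, since it requires matching conventions across several sources rather than any new argument.
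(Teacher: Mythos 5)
Your proposal is correct and follows essentially the same route as the paper: verify the hypotheses of Theorem \ref{thegcomp} (compactness of the $S_H^0$, connectivity of the orbit category, compact generation — for which the paper cites \cite{hukriz}, Lemma 2.3(i) of \cite{lewishur}, and Theorem 9.4.3 of \cite{axstab}, respectively), then read off parts (1)–(4), (6), (7) from that theorem, and prove part (5) exactly as you do, by comparing the restrictions to $\hu[i]$ via Remark \ref{rgensmash}(\ref{irspure}) and the dimension axiom (XIII.4.4) of \cite{mayeq}. The only difference is cosmetic: you sketch the connectivity of $\hu$ and the generation statement directly instead of quoting the references, which is fine.
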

\begin{proof}
\ref{itgfine}. The compactness of the spectrum $S_H^0$  for any closed subgroup $H$ of $G$ is given by Corollary A.3 of \cite{hukriz}; cf. also Lemma I.5.3 of \cite{lms}. The category $\hu$ is a connective subcategory of $\shg$  according to Lemma 2.3(i) of \cite{lewishur} (see also Proposition I.7.14 of \cite{lms} for a generalization). 

Next, $\cp$ generates $\shg$ as its own localizing subcategory according to Theorem 9.4.3 of \cite{axstab}.\footnote{Alternatively, note that the definition of $\shg$ (see \S I.5 of of \cite{lms}) immediately implies that $(\cup_{i\in \z}\cp[i])\perpp=\ns$, and it remains to apply Lemma \ref{lcg} to obtain this generation statement.}  

Lastly, we apply Theorem \ref{thegcomp}(\ref{itnp1d}) to compute the heart of $\wg$.

\ref{itgrest}. This is just the corresponding case of Theorem \ref{thegcomp}(\ref{itnpr}).

\ref{itgconn}. By definition, a $G$-spectrum $N$ is $n-1$-connected whenever $\pi_i^H(N)\cong \shg(S_H^i,N)= \ns$ for any $i<n$ and  any closed subgroup $H$ of $G$. Hence it remains to apply Theorem \ref{thegcomp}(\ref{itnp1d}) to obtain all the statements in question.

\ref{itgpure}. Immediate from Theorem \ref{thegcomp}(\ref{itnp2}) (cf. also Remark \ref{rgensmash}(\ref{irspure})).

\ref{itgcacp}. Since the functor $H^G_0$ (resp. $H_G^0(-,M)$)   respects coproducts (resp. converts coproducts into products), combining the previous assertion with Remark \ref{rgensmash}(\ref{irspure})  we obtain that it suffices to compare the restrictions of $H^G_0$ and $H^{\cacp}$ (resp. $H_G^0(-,M)$ of $H_M$) to the categories $\hu[i]$ for $i\in \z$. Now, 
 these restrictions are canonically isomorphic by definition (resp. by the "dimension axiom" (XIII.4.4) of \cite{mayeq}).

\ref{itgemg}. This is just the corresponding case of Theorem \ref{thegcomp}(\ref{itnp3}).

\ref{itgneg}. The first of these descriptions of $ \shg_{\wg\le 0}$ is given by Theorem \ref{thegcomp}(\ref{itnpb}). Next,
  $ \shg_{\wg\le 0}=   \perpp \shg_{\wg\ge 1}$ according to Proposition \ref{pbw}(\ref{iort}).  It remains to recall the definition of pure functors	to conclude the proof. 
\end{proof}

\begin{rema}\label{rshg}
\begin{enumerate}
\item\label{irglam}
Our theory  can also be applied to the subcategories of $\lam$-linear spectra  in $\shg$; that is, for a  set of prime numbers $S\subset \z$ and $\lam=\z[S\ob]$ one can invert all morphisms of the form $s\id_E$ for all $s\in S$ and $E\in \obj \shg$, and the right adjoint $l_S$  gives an embedding of this localization $\shg_{\lam}$ into $\shg$. These versions of our statements are described in \S4.2 of \cite{bkwn}.

\item\label{irgcell} It is certainly very convenient to compare pure functors by looking at their restrictions to the subcategory $\hu$ of $\hw$ only; this was crucial for the proof of part \ref{itgcacp} of our theorem. Recall however that the (co)homology functors coming from Mackey functors can be described in terms of {\it skeletal filtrations of $G$-CW-spectra} (see \S XIII.4 of \cite{mayeq}). Thus it would be nice to understand the relation between filtrations of this sort and $\wg$-Postnikov towers. 

The author conjectures that a $\wg$-Postnikov tower of a spectrum $E$ lifts to a skeletal filtration (in the category of $G$-CW-spectra) if and only if the corresponding terms $E^i\in \shg_{\wg=0}$ are coproducts of elements of $\cp$ (i.e., one cannot take retracts of objects of this type here). Note that the proof of this statement would closely relate the filtrations on $\shg$ given by $\wg$ with {\it $(a,b)$-dimensional spectra} of \cite[Appendix A]{green} (one considers the quotients of possible skeletal filtrations for object of $\shg$ in this definition).\footnote{Probably, Corollary \ref{cenvel}  can help to study this relation for {\it finite} $G$-spectra. One of the problems here is that the additive hull of $\cp$ is not Karoubian in general; cf. Theorem A.4 and Remarks A.5 of ibid.} The author believes that this conjecture is rather easy for those spectra that come from {\it $G$-CW-complexes}, and requires certain (countable) homotopy colimit and limit argument (cf.  Definition 2.1 of \cite{bokne} or Proposition \ref{ppcoprws}(\ref{icoprhcl})) in general.

Note however that in the case $G= \{e\}$ there exists an alternative notion of  {\it cellular tower} considered in \S6.3 of \cite{marg}. Theorem \ref{top}(\ref{itopcell}) below  says that this notion is equivalent to that of a $\wsp$-Postnikov tower;  this is related to the fact that in this case the category of coproducts of  copies of $S^0$ (along with its subcategory of finite coproducts) is Karoubian. 

\item\label{iruniv}
It appears that the results of \cite{lms} are actually sufficient to generalize all the assumptions of our theorem  to the case where $\cu$ is the stable homotopy category of $G$-spectra indexed on a not (necessarily) complete $G$-universe. In particular,  this universe can be $G$-trivial (i.e.,  $G$ acts trivially on it); this  allows us to apply it to the corresponding representable functors as considered in \S IV.1 of \cite{bred}. 
\end{enumerate} \end{rema}

Now let us formulate our weight detection statements in this setting. We will freely use the notation introduced above.

\begin{pr}\label{pshgwd}
Consider the following conditions on an object $E$ of $\shg$.

(i). $E$ is connective.

 (ii). $C_j(E)=0$ for any  $\wg$-pure homological functor $C$ from $\shg$ and $j<0$.

(iii). $C^j(E)=0$ for any  $\wg$-pure cohomological functor $C$ from $\shg$ and $j<0$.

(iv). $E\perp I[j]$ (see Theorem \ref{tshg}(\ref{itgemg})) for all $j<0$.

(v). $H_j^{\cacp}(E)=0$ for all $j<0$.

Then the following statements are valid. 

1. Conditions (ii)-(v) are equivalent. 

2. These conditions follow from condition (i).

 3. The converse implication is fulfilled whenever $E$ is $m$-connected for some $m\in \z$.
\end{pr}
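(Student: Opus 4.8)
The plan is to deduce the whole proposition from Proposition \ref{pgcomp} specialized to $\cu=\shg$, $w=\wg$, and $n=0$; its hypotheses are exactly the ones verified in Theorem \ref{tshg}(\ref{itgfine}), and the objects $I$, the functor $\cacp$, and the pure homological functor $H^{\cacp}$ occurring there are the ones fixed before the statement (with $\hrt\cong\emg$ by Theorem \ref{tshg}(\ref{itgemg})). Under this specialization, condition (ii) here is condition (iv) of Proposition \ref{pgcomp}, condition (iv) here is its condition (iii), and condition (v) here is its condition (ii); so Proposition \ref{pgcomp} immediately gives that (ii), (iv), (v) are equivalent, and moreover that each of them is equivalent to $t(E)\in K(\hw)_{\wstu\ge 0}$. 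I would record this last equivalence explicitly, since it is the bridge to statements 2 and 3.

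Next I would fit condition (iii) into statement 1. Here the key tool is the self-duality of purity (Remark \ref{rpuresd}(1)): a pure cohomological functor $C$ from $\shg$ into an abelian category $\au$ becomes, after inverting arrows, a pure homological functor $C'\colon\shg\to\au\opp$, and as $\au$ ranges over all abelian categories so does $\au\opp$. Since $C^j(E)=C(E[-j])$ coincides with $C'_j(E)$ on the nose, and the zero object of $\au$ is the zero object of $\au\opp$, the vanishing of $C^j(E)$ for every such $C$ and every $j<0$ is literally condition (ii). This yields (ii)$\iff$(iii) and completes statement 1.

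For statement 2: if $E$ is connective then $E\in\shg_{\wg\ge 0}$, so Proposition \ref{pwt}(\ref{irwcons}) furnishes a weight Postnikov tower for $E$ with $E^i=0$ for $i>0$; hence $t(E)\in K(\hw)_{\wstu\ge 0}$, and the equivalences of statement 1 give (ii)--(v). For statement 3: by Theorem \ref{tshg}(\ref{itgconn}) an $m$-connected spectrum lies in $\shg_{\wg\ge m+1}$ and is in particular $\wg$-bounded below, so Proposition \ref{pwt}(\ref{iwcons}) applies and gives $E\in\shg_{\wg\ge 0}\iff t(E)\in K(\hw)_{\wstu\ge 0}$; together with statement 1 this is exactly the converse implication asserted. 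The whole argument is essentially bookkeeping once Proposition \ref{pgcomp} is in hand; the only points that need some care are the passage between pure cohomological and pure homological functors used for condition (iii) (one must check that no indexing shift is introduced and that "all targets" on one side really corresponds to "all targets" on the other), and keeping track that statement 2 can use the unbounded form Proposition \ref{pwt}(\ref{irwcons}) whereas statement 3 genuinely needs the bounded-below hypothesis in order to invoke the "only if" direction of Proposition \ref{pwt}(\ref{iwcons}).
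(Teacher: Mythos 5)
Your argument is correct and essentially coincides with the paper's proof: Proposition \ref{pgcomp} (with $n=0$) handles (ii), (iv), (v) and identifies them with $t(E)\in K(\hw)_{\wstu\ge 0}$, inverting arrows in the target gives (ii)$\iff$(iii), and Theorem \ref{tshg}(\ref{itgconn}) plus Proposition \ref{pwt}(\ref{iwcons}) yields statement 3. The only immaterial difference is in statement 2, where the paper deduces (ii) from (i) directly from the definition of purity instead of passing through Proposition \ref{pwt}(\ref{irwcons}) and the weight complex.
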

\begin{proof}
1. Conditions (ii) and (iii) are equivalent since we can just invert arrows in the target.  It remains to apply  Proposition \ref{pgcomp} to our setting.

2. Condition (i) implies condition (ii) just by the definition of purity.

3. Recall that Proposition \ref{pgcomp} also implies that condition (ii) (as well as (iv) and (v)) are equivalent to $t_{\wg}(E)\in K(\hw^G)_{\wstu \ge 0}$. Thus it remains to apply Proposition \ref{pwt}(\ref{iwcons}).
\end{proof}

\subsection{The case of trivial $G$: cellular towers}\label{shtop}

Now we apply our results to the  stable homotopy category $\shtop$ (whose detailed description can be found in \cite{marg}). This corresponds to the case of a trivial $G$ in Theorem \ref{tshg}. We  will write $\emo$ for $\emg$ and $w^{sph}$ for $\wg$ in this case,  and use the remaining notation from this theorem. 

\begin{theo}\label{top}
Set $\cp=\{S^0\}$. 

 Then the following statements are valid.

\begin{enumerate}

\item\label{itoptriv}
The functor $\shtop(S^0,-)$ gives equivalences $\hwsp\to \abfr$ (the category of free abelian groups) and $\emo\to \ab$; thus $\aucp$ is equivalent to $\ab$ as well.

Moreover, $\wsp$ restricts to the category of finite spectra, and the heart of this restriction is equivalent to the category of  finitely generated  free abelian groups.

\item\label{itopsingh}
The functor $\cacp$ is essentially the singular homology functor $\hsing$.

\item\label{itopsingc}
For any abelian group $\gam$ and the corresponding spectrum $\egam\in \obj \emo$ the functor $\shtop(-,\egam)$ is isomorphic to  the singular cohomology with coefficients in $\gam$. 

\item\label{itopcell}
A  $\wsp$-Postnikov tower of a spectrum $E\in \obj \shtop$ is a cellular tower  for $E$ in the sense of 
 \cite[\S6.3]{marg}.

\item\label{itopskel}
Assume that the following statement is fulfilled for any object $E$ of $\shtop$: if $\hsing_i(E)=\ns$ for $i>0$ and $\hsing_0(E)$ is a free abelian group then $E\in \shtop_{\wsp\le 0}$.

Then (conversely) a cellular tower  for $E$ in the sense of loc. cit. is a $\wsp$-Postnikov tower, and 
$\shtop_{\wsp\le n}$ consists precisely of {\it$n$-skeleta} (of certain spectra) in the sense of ibid. (cf. also Definition 6.7 of \cite{christ}). 
\end{enumerate}

\end{theo}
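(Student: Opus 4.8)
The plan is to derive Theorem~\ref{top} from the case $G=\{e\}$ of Theorem~\ref{tshg}, together with two special features of $\shtop$: the endomorphism ring $\shtop(S^0,S^0)=\z$, and the fact that the category $\abfr$ of free abelian groups is already Karoubian (a retract of a free abelian group is a subgroup of it, hence free). For part~(\ref{itoptriv}), observe that here $\hu$ is the one-object preadditive category with endomorphism ring $\z$, so its formal coproductive hull is equivalent to $\abfr$ and is Karoubian; thus Theorem~\ref{tshg}(\ref{itgfine}) identifies $\hwsp$ with $\abfr$ via $E\mapsto\shtop(S^0,E)$, Theorem~\ref{tshg}(\ref{itgemg}) identifies $\emo\cong\aucp$ with the category of additive functors $\hu\opp\to\ab$, i.e.\ with $\ab$ itself (the equivalence being $E\mapsto\pi_0(E)$), and Theorem~\ref{tshg}(\ref{itgrest}) gives the restriction of $\wsp$ to the subcategory of finite spectra, whose heart is the retraction-closure of the finite wedges of $S^0$, i.e.\ the finitely generated free abelian groups.

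For part~(\ref{itopsingh}) I would simply specialize Theorem~\ref{tshg}(\ref{itgcacp}): the Burnside ring of the trivial group is $\z$, so the equivariant ordinary homology with Burnside-ring coefficients functor of \cite{lewishur} is ordinary singular homology, whence $H^{\cacp}\cong\hsing$ under the identification $\aucp\cong\ab$ of part~(\ref{itoptriv}). For part~(\ref{itopsingc}), the spectrum $\egam$ is the Eilenberg--MacLane spectrum $H\gam$; it lies in $\emo$, hence represents a $\wsp$-pure cohomological functor by Theorem~\ref{tshg}(\ref{itgpure}), and being representable it converts coproducts into products, so by Remark~\ref{rgensmash}(\ref{irspure}) it is determined up to isomorphism by its restriction to $\hu$. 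Since $\shtop(S^0,\egam)=\gam$ coincides with the restriction to $S^0$ of singular cohomology with coefficients in $\gam$ (which is likewise pure cohomological and sends coproducts to products), it follows that $\shtop(-,\egam)$ is singular cohomology with coefficients in $\gam$, the shifts being handled identically. (This is also just the cohomological form of Theorem~\ref{tshg}(\ref{itgcacp}) for trivial $G$, Bredon cohomology for a Mackey functor over $\{e\}$ being ordinary cohomology with the corresponding coefficient group.)

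For part~(\ref{itopcell}) I would unwind Definition~\ref{dwpt}: a $\wsp$-Postnikov tower of $E$ consists of morphisms $M_{\le i}\stackrel{h_i}{\to}E$ giving $i$-weight decompositions, together with distinguished triangles $M_{\le i-1}\to M_{\le i}\to M_i\to M_{\le i-1}[1]$ with $M_i\in\shtop_{\wsp=i}$. By part~(\ref{itoptriv}) every object of $\shtop_{\wsp=0}=\hwsp$ is, up to isomorphism, a coproduct of copies of $S^0$ (the Karoubian-ness of $\abfr$ is exactly what lets one drop the retracts here), so $M_i\cong\bigvee S^i$ and the cofibre of $M_{\le i-1}\to M_{\le i}$ is a wedge of $i$-spheres. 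It then remains to note that $E\cong\hcl_i M_{\le i}$ --- which for $\wsp$-Postnikov towers in $\shtop$ is Theorem~4.2.3(7) of \cite{bkwn}, and is elementary for bounded below $E$ using Proposition~\ref{pwt}(\ref{irwcons}) --- whereupon the tower satisfies precisely Margolis's axioms for a cellular tower; cf.\ Remark~\ref{rwc}(\ref{irwc51}).

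Finally, for part~(\ref{itopskel}): given a cellular tower $\{E_n\}$ for $E$ in the sense of \cite[\S6.3]{marg}, I would first check $E_n\in\shtop_{\wsp\le n}$ via the hypothesis. The cellular chain complex of $E_n$ is concentrated in degrees $\le n$, so $\hsing_i(E_n[-n])=\hsing_{i+n}(E_n)=0$ for $i>0$, while $\hsing_0(E_n[-n])=\hsing_n(E_n)$ is the kernel of a map out of the free group on the $n$-cells, hence free; the hypothesis then yields $E_n[-n]\in\shtop_{\wsp\le0}$. Next, $E/E_n\cong\hcl_k(E_{n+k}/E_n)$, and each $E_{n+k}/E_n$ is an iterated extension of wedges of $j$-spheres with $n+1\le j\le n+k$, hence lies in $\shtop_{\wsp\ge n+1}$, since that class is closed under coproducts and extensions (Theorem~\ref{tshg}(\ref{itgconn}), Proposition~\ref{pbw}(\ref{iext})) and therefore under countable homotopy colimits. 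Thus each $E_{n-1}\to E_n\to E_n/E_{n-1}\to E_{n-1}[1]$ is an $(n-1)$-weight decomposition, so the cellular tower is a $\wsp$-Postnikov tower. For the skeleta assertion, the $n$-skeleta are by definition the objects $E_n$ in cellular towers; we have just shown these lie in $\shtop_{\wsp\le n}$, and conversely any $M\in\shtop_{\wsp\le n}$ has, by Proposition~\ref{pwt}(\ref{irwcons}) together with part~(\ref{itopcell}), a cellular tower with $M_{\le i}=M$ for $i\ge n$, exhibiting $M$ itself as an $n$-skeleton. The main obstacle I anticipate is the careful matching of Margolis's cellular-tower and skeleton definitions with the weight-theoretic ones, and in particular the homotopy-colimit identification $E\cong\hcl_i M_{\le i}$ in part~(\ref{itopcell}) together with the point --- which is exactly why the extra hypothesis is imposed in~(\ref{itopskel}) --- that $\shtop_{\wsp\le n}$ is not a priori closed under countable homotopy colimits, so that membership of an unbounded-below $E_n$ in $\shtop_{\wsp\le n}$ genuinely needs singular homology to detect the upper weight bound.
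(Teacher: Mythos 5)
Your overall route is the paper's: every part is obtained by specializing Theorem \ref{tshg} to trivial $G$, using that $\shtop(S^0,S^0)=\z$ and that $\abfr$ is Karoubian. Parts (\ref{itoptriv})--(\ref{itopsingc}) and (\ref{itopskel}) match the paper's arguments up to cosmetic differences: in (\ref{itopsingh}) you invoke the trivial-group case of Theorem \ref{tshg}(\ref{itgcacp}) (Burnside ring $=\z$) where the paper instead checks directly that $\hsing$ is pure and coproduct-preserving and compares restrictions to $\hu$; in (\ref{itopskel}) you re-derive by a cellular chain complex computation the homology facts that the paper simply cites from Proposition 6.12 of \cite{marg}, and you replace the paper's homotopy-group computation showing $\co(E_{\le n}\to E)\in\shtop_{\wsp\ge n+1}$ by a homotopy-colimit-of-quotients argument; both variants are sound (and your concluding skeleta argument is exactly the paper's).

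The one place where you gloss over real content is part (\ref{itopcell}). Margolis's definition of a cellular tower contains, besides the requirement that the subquotients be wedges of spheres, two further clauses: $E$ must be the \emph{minimal weak} colimit of the $E_{\le n}$, and the inverse limit $\prli_{n\to-\infty}\hsing_*(E_{\le n})$ must vanish. The identification $E\cong\hcl_n E_{\le n}$ (the paper obtains it from Theorem 4.1.3(1,2) of \cite{bsnew}, using that $\wsp$ is smashing and left non-degenerate) yields the first clause only after the short extra check that $\shtop(E,Y)\to\prli_n\shtop(E_{\le n},Y)$ is surjective and that $\pi_*(E)\cong\inli_n\pi_*(E_{\le n})$; and the second clause does not follow from the colimit identification at all --- it is precisely where purity of $\hsing$ (your part (\ref{itopsingh})) enters: for fixed $i$ one has $\hsing_i(E_{\le n})=\ns$ as soon as $n<i$, since $E_{\le n}[-i]\in\shtop_{\wsp\le n-i}$. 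So your phrase ``whereupon the tower satisfies precisely Margolis's axioms'' hides two verifications that are easy but genuinely needed; with them added, your proof coincides with the paper's.
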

\begin{proof}
\ref{itoptriv}. Since the endomorphism ring of $S^0$ is $\z$, $\aucp$ is equivalent to $\ab$. Next,  Theorem 
\ref{tshg}(\ref{itgfine}) implies that $\hwsp$ is  equivalent to the Karoubi envelope of the category of free abelian groups, i.e., to   $\abfr$ itself. Moreover, applying 
Theorem \ref{tshg}(\ref{itgrest}) we obtain that $\wsp$ restricts to finite spectra and compute the  heart of this restriction. Lastly, 
 applying part \ref{itgemg} of that theorem we obtain that $\emo$ is equivalent to $\ab$ as well. 


\ref{itopsingh}. Obviously, singular homology respects coproducts (since $\hsing(-)\cong \shtop(S^0,-\wedge \emz)$ and the smash product in $\shtop$ preserves coproducts; here $\emz$ is the Eilenberg-MacLane spectrum corresponding to the group $\z$). It is also pure since $\hsing_i(S^0)=\ns$ for $i\neq 0$. Thus comparing the restrictions of $\hsing$ and $\cacp$ to the category $\hu$ (whose only object is $S^0$ in this case) we obtain the isomorphism in question according to the aforementioned Theorem \ref{thegcomp}(\ref{itgemg}) (cf. also Remark \ref{rgensmash}(\ref{irspure})).

\ref{itopsingc}. This is a well-known statement;  it can also be easily deduced from Theorem \ref{thegcomp}(\ref{itgemg}). 

\ref{itopcell}. Let us recall that a cellular tower for $E$ is a certain Postnikov tower for $E$ whose term $E_{\le n}$ (in the notation of Definition \ref{dfilt}(2)) was denoted by $E^{(n)}$ in \S6.3 of \cite{marg}. Respectively, the assumption on $E_n=\co(E_{\le n-1}\to E_{\le n})$ in loc. cit. says that $E_n$ is a coproduct of $S^0[n]$ (since $S^0[n]$ is the $n$-dimensional sphere spectrum) for all $n\in \z$; this statement is  clearly fulfilled for  $\wsp$-Postnikov towers (see Proposition \ref{pwt}(\ref{iwpt1})). 

Next,  $E$ should be the {\it minimal weak colimit} of  $E_{\le n}$ in the sense of \S3.1 of ibid. 
Now, 
 $\wsp$ is a smashing left non-degenerate weight structure; hence 
 $\hcl_n \wsp_{\le n}E\cong E$ by Theorem 4.1.3(1,2) of \cite{bsnew}; here $\hcl_n \wsp_{\le n}E$ denotes the {\it countable homotopy colimit} of these objects (see  Proposition \ref{ppcoprws}(\ref{icoprhcl})).  It easily follows that the induced homomorphism $\shtop(E,Y)\to \prli_n \shtop(\wsp_{\le n} E ,Y)$ is surjective for any $Y\in \obj \shtop$, and  the stable homotopy groups $\pi_*(E)$ of $E$ are the colimits of those of $\wsp_{\le n}E$  (note that $\pi_i= \shtop(S^0[i],-)$); here one can apply  the well-known Remark 4.1.2(2,3) of \cite{bsnew}. 
 Thus $E$ is the minimal weak colimit of $\wsp_{\le n}E$ by definition. 

Furthermore, assertion \ref{itopsingh} of 
 this  theorem (combined with the definition of pure homological functors)  immediately implies that the inverse limit of  singular homology of $\wsp_{\le n}E$ (when $n$ goes to $-\infty$) vanishes; this is the last of the conditions in the definition of cellular towers.

\ref{itopskel}. Assume that $E^{(n)}=E_{\le n}$ is a cellular tower for $E$ in the sense of Margolis. Applying Proposition 6.12 
  of \cite{marg} we obtain that $\hsing_i(E_{\le m})=\ns$ whenever $i>m\in \z$, and $\hsing_m(E_{\le m})$ is a free abelian group. 	 Applying our assumption we obtain that $E_{\le m}\in \shtop_{\wsp\le m}$.

	Next, since $E_n\in \shtop_{\wsp=n}$, the corresponding distinguished triangles (\ref{etpt}) imply that 	the homomorphism $\pi_i(E_{\le n})\to \pi_i(E_{\le n+1})$ is bijective if $i<n\in \z$ and surjective if $i=n$. Applying the isomorphism $\inli_n  \pi_i(E_{\le n})\cong \pi_i(E)$ we obtain that $\pi_i(\co(E_{\le n}\to E))=\ns$ if $i\le n$. 
	Hence $\co(E_{\le n}\to E)\in \shtop_{w\ge n+1}$ according to Theorem \ref{tshg}(\ref{itgconn}). Thus $E_{\le n}\to E\to \co(E_{\le n}\to E)\to E_{\le n}[1]$ is an $n$-weight decomposition of $E$, and we obtain that $E_{\le n}$ give a weight Postnikov tower indeed.

To prove that second statement in our assertion it remains to recall that $n$-skeleta are just the spectra of the form $E^{(n)}$ for  $E\in \obj \shtop$ (and some cellular tower for $E$). 
\end{proof}

\begin{rema}\label{rmarg}
1. The assumption made in part \ref{itopcell} of our theorem  is given by Theorem 4.2.3(6) of \cite{bkwn}.  
 Moreover, if we assume in addition that  $E$ belongs to $\shtop_{\wsp\le m}$ for some $m\in \z$ then  the  statement easily follows from Proposition \ref{pwt}(\ref{iwcons}),   whereas the general case requires the theory of {\it killing weights} (as developed in ibid.). 

Note also that combining loc. cit. and related results with Theorem \ref{tshg}  one obtains several new statements on $\shg$. 

2. Parts \ref{itoptriv}-- \ref{itopcell} 
 of our theorem were previously formulated by the author in \S4.6 of \cite{bws}. However, their proofs sketched in loc. cit. contained significant gaps,  and the results of the current paper and of \cite {bkwn}  really help in closing them.

Note also that part \ref{itopcell} of our theorem suggests that weight spectral sequences corresponding to $\wsp$ can be called Atiyah-Hirzebruch ones.

3. Since cellular towers are $\wsp$-Postnikov towers, we obtain that Proposition 6.18 of \cite{marg} is the corresponding case of the weak functoriality of weight Postnikov towers (see Proposition \ref{pwt}(\ref{iwpt2})).
\end{rema}

\subsection{
The relation to adjacent $t$-structures and connective stable homotopy theory}\label{spost}

Now we  relate the result of this section to $t$-structures. Though in \cite{bbd} (where this notion was introduced) and in previous papers of the author the "cohomological convention" for $t$-structures was used, we prefer to use the homological convention that is "more coherent" with the topological examples.  The corresponding versions of the definitions are as follows.

\begin{defi}\label{dtstrh}
1. A couple of subclasses  $\cu_{t\le 0},\cu_{t\ge 0}\subset\obj \cu$ will be said to be a
$t$-structure $t$ on $\cu$  if 
they satisfy the following conditions:

(i) $\cu_{t\le 0}$ and $\cu_{t\ge 0}$  are strict, i.e., contain all
objects of $\cu$ isomorphic to their elements.

(ii) $\cu_{t\le 0}\subset \cu_{t\le 0}[1]$ and $\cu_{t\ge 0}[1]\subset \cu_{t\ge 0}$.

(iii)  $\cu_{t\ge 0}[1]\perp \cu_{t\le 0}$.

(iv) For any $M\in\obj \cu$ there exists a  {\it $t$-decomposition} distinguished triangle
\begin{equation}\label{tdec}
L_tM\to M\to R_tM{\to} L_tM[1]
\end{equation} such that $L_tM\in \cu_{t\ge 0}, R_tM\in \cu_{t\le 0}[-1]$.

2. $\hrt$ is the full subcategory of $\cu$ whose object class is $\cu_{t=0}=\cu_{t\le 0}\cap \cu_{t\ge 0}$.
\end{defi}

 \begin{rema}\label{rtst}
Let us recall some well-known properties of $t$-structures (cf. \S1.3 of \cite{bbd}).

1. 
The triangle (\ref{tdec}) is canonically and functorially determined by $M$; thus $L_t$ and $R_t$ are functors. 

2. $\hrt$ is   an abelian category with short exact sequences corresponding to distinguished triangles in $\cu$.

Moreover, 
 $L_t\circ [1] \circ R_t\circ [-1] \cong [1] \circ R_t\circ [-1] \circ L_t$ (if we consider these functors as endofunctors of $\cu$). The corresponding  composite functor $H^t$ actually takes values in $\hrt\subset \cu$, and it is homological if considered this way. 

3. We have $\cu_{t\le 0}=(\cu_{t\ge 0}^{\perp})[1]$. Thus $t$ is uniquely determined by $\cu_{t\ge 0}$ (and  actually by $\cu_{t\le 0}$ as well). 
\end{rema}

Now let recall some relations of $t$-structures to the results and formulations above.

\begin{pr}\label{padj}

Adopt the notation and assumptions of Theorem \ref{thegcomp}. 
Then the following statements are valid.

\begin{enumerate}
\item\label{ipa1}
$w$ is {\it left adjacent}  to a certain (unique) $t$-structure $t$ in the sense of  \cite[Proposition 1.3.3]{bvtr}, i.e., $\cu_{t\ge 0}=\cu_{w\ge 0}$. 

\item\label{ipa2} The category $\hrt\subset \cu$ is the heart of this $t$, and the functor $H^t$ becomes isomorphic to $H^\cacp$ when  combined with the equivalence $\hrt\to \aucp$ provided by Theorem \ref{thegcomp}(\ref{itnp3}).
\end{enumerate}
\end{pr}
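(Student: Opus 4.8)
The plan is to read everything off the description of $w$ in Theorem~\ref{thegcomp}(\ref{itnp1d}), the characterisation of pure representable functors in Remark~\ref{rpuresd}(2), and Theorem~\ref{tpure}. For assertion~\ref{ipa1} I would first produce the $t$-structure: since $\hu$ is connective and $\cp=\obj\hu$ is a set of compact generators of $\cu$ (cf.\ Lemma~\ref{lcg}), Theorem~4.5.2 of \cite{bws} --- already invoked in the proof of Theorem~\ref{thegcomp}(\ref{itnp3}) --- provides a $t$-structure $t$ on $\cu$ with $\cu_{t\ge 0}=(\cup_{i<0}\cp[i])^{\perp}$. By Theorem~\ref{thegcomp}(\ref{itnp1d}) this class equals $\cu_{w\ge 0}$, so $\cu_{t\ge 0}=\cu_{w\ge 0}$, i.e.\ $t$ is left adjacent to $w$; its uniqueness is immediate from Remark~\ref{rtst}(3). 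One can also avoid \cite{bws} and build $t$ by hand, using the Brown representability property (Theorem~\ref{thegcomp}(\ref{itnpb})) to produce, for each $M$, a triangle $L_tM\to M\to R_tM\to L_tM[1]$ with $L_tM\in\cu_{w\ge 0}$ and $R_tM\in(\cu_{w\ge 0})^{\perp}$; the only subtle point here is keeping track of the homological normalisation.

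For assertion~\ref{ipa2} I would begin by identifying the heart $\cu_{t=0}$ of $t$ with $\hrt$. From $\cu_{t\ge 0}=\cu_{w\ge 0}$ and Remark~\ref{rtst}(3) a one-line shift argument gives $\cu_{t\le 0}=(\cu_{w\ge 0})^{\perp}[1]=(\cu_{w\ge 1})^{\perp}$, whence $\cu_{t=0}=\cu_{w\ge 0}\cap(\cu_{w\ge 1})^{\perp}$. On the other hand, by Remark~\ref{rpuresd}(2) the object class of $\hrt$ is $(\cu_{w\ge 1}\cup\cu_{w\le -1})^{\perp}=(\cu_{w\ge 1})^{\perp}\cap(\cu_{w\le -1})^{\perp}$, and $(\cu_{w\le -1})^{\perp}=\cu_{w\ge 0}$ by Proposition~\ref{pbw}(\ref{iort}). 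Hence $\obj\hrt=\cu_{t=0}$: the category $\hrt$ is literally the heart of $t$ (which explains the notation), and the equivalence $\hrt\to\aucp$ of Theorem~\ref{thegcomp}(\ref{itnp3}), sending $M$ to the functor $P\mapsto\cu(P,M)$ on $\hu$, is simultaneously an equivalence of the heart of $t$ with $\aucp$.

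To identify the functors, the key observation is that $H^t:\cu\to\hrt$ is $w$-pure. Indeed, $H^t$ kills $\cu_{t\ge 1}=\cu_{w\ge 1}$ by construction, and it kills $\cu_{t\le -1}=(\cu_{w\ge 0})^{\perp}$; the latter class contains $\cu_{w\le -1}$, since $\cu_{w\le -1}\perp\cu_{w\ge 0}$ is just the orthogonality axiom of $w$ shifted by $[-1]$. So, composing $H^t$ with the equivalence $\hrt\to\aucp$, we obtain a $w$-pure homological functor $\cu\to\aucp$, and by Theorem~\ref{tpure}(2) such a functor is determined up to isomorphism by its restriction to $\hw$. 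Since $\cu_{w=0}\subset\cu_{w\le 0}$ and $\cu_{w\le 0}\perp\cu_{w\ge 1}$, we have $\cu_{w=0}\subset\cu_{t=0}$, so for $Q\in\hw$ the object $H^t(Q)$ is canonically isomorphic to $Q$; hence the composite sends $Q$ to $\cu(-,Q)|_{\hu}=\cacp(Q)$, i.e.\ its restriction to $\hw$ is $\cacp$. Since $H^{\cacp}$ is likewise $w$-pure and its restriction to $\hw$ is $\cacp$ as well --- its value on $Q\in\hw$ being the zeroth homology of the one-term complex $\cacp(Q)$, because a weight complex of an object of $\hw$ may be taken concentrated in degree $0$ by Proposition~\ref{pwt}(\ref{irwcons}) --- Theorem~\ref{tpure}(2) forces $H^t\cong H^{\cacp}$ after the equivalence, which is the assertion.

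The only genuinely delicate point is the convention bookkeeping in assertion~\ref{ipa1}: verifying that the adjacent $t$-structure (imported from \cite{bws} or constructed via Brown representability) really has $\cu_{t\ge 0}=(\cup_{i<0}\cp[i])^{\perp}$ under the homological normalisation used here. Once that is in place, both parts become formal consequences of the orthogonality axiom of $w$, Theorem~\ref{thegcomp}, and the equivalence of Theorem~\ref{tpure}(2).
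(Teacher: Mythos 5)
Your argument is correct and follows essentially the same route as the paper, whose proof is simply the citation "Immediate from Theorem 4.5.2 of \cite{bws}; one can also combine Theorem 3.2.3(I) of \cite{bvtr} with Theorem \ref{thegcomp}(\ref{itnp1d})" — you invoke the same Theorem 4.5.2 of \cite{bws} together with Theorem \ref{thegcomp}(\ref{itnp1d}) for assertion \ref{ipa1}, and your Brown-representability alternative mirrors the paper's second option. The extra detail you supply for assertion \ref{ipa2} (identifying $\obj\hrt$ with $\cu_{t=0}$ via Remark \ref{rpuresd}(2) and Proposition \ref{pbw}(\ref{iort}), and comparing $H^t$ with $H^{\cacp}$ through purity and Theorem \ref{tpure}(2)) is accurate and just makes explicit what the paper leaves to the reader.
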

\begin{proof}  Immediate from  Theorem 4.5.2 of \cite{bws}; one can also combine Theorem 3.2.3(I) of \cite{bvtr} with Theorem \ref{thegcomp}(\ref{itnp1d}). 
\end{proof}

\begin{rema}\label{radjt}
 1. For $\cu=\shtop$ (and $\cp=\{S^0\}$) the corresponding $t$-structure is often called the Postnikov $t$-structure; so the author suggest to use this term for $\cu=\shg$ (and for $\cp$ as in the Theorem \ref{tshg}) as well.  In \S3.2.B of \cite{marg} the object $L_tE$ (resp. $R_tE$; here $E$ is an object of $\shtop$) in the distinguished triangle (\ref{tdec}) was said to be  {\it of type $E[0,\infty]$} (resp. $E[-\infty,-1]$).
 Respectively,  in (the proof of) Proposition 3.8 of ibid. certain $t$-decompositions (\ref{tdec}) were constructed; see also  Proposition 7.1.2(c) of \cite{axstab} for a more general (and "explicit") formulation of this sort.

Note also that this  $t$ does not restrict to finite spectra in contrast to $\wsp$. 

2. Let us now say more on the relation of weight structures to connective stable homotopy theory as discussed in 
\S7 of \cite{axstab}.

In that section the corresponding triangulated category (we will write $\cu$ for it) was assumed to be {\it monogenic}, i.e., $\cu$ is compactly generated by a single element set $\{X\}$ for some $X\in \obj \cu$. Next, the {\it connectivity} assumption on $X$ in ibid. means that $X\perp X[i]$ for all $i>0$. Applying Theorem \ref{thegcomp}(\ref{itnp1},\ref{itnp1d}) we obtain a unique smashing weight structure on $\cu$ whose heart consists of all retracts of coproducts of copies of $X$.

Now,  our weight structure approach give some new statements on the corresponding class $\cu_{w\ge 0}=\cu_{t\ge 0}$. We also obtain  the class  $\cu_{w\le 0}$ that appears to be new and important (even in the case $\cu=\shtop$). In particular, we use this class to give a  nice definition of weight Postnikov towers (that can also be called cellular towers) for arbitrary objects of   $\cu$ (in contrast to Proposition 7.1.2(a) of ibid.). The notions of weight complex and weight spectral sequence are also new for this context.

Moreover, note that we currently have a good understanding of weight-exact localizations (achieved in \cite[\S4]{bos} and \cite[\S3]{bsnew}; cf. the beginning of \S7 of \cite{axstab}).
\end{rema}

\appendix
\section{Some details on weight complexes} 
 \label{sdwc}

In this section we discuss certain problems of the theory of weight complexes and weight spectral sequences, and of the exposition of this theory in \cite{bws}.
 So we give the proof of  Proposition \ref{pwt}(\ref{iwpt1}--\ref{iwpt3},\ref{iwhefun},\ref{iwcex},\ref{irwcons},\ref{iwc2342})  in \S\ref{sirwc};   the reason for doing this is described in Remark  \ref{rwcbws}(\ref{i324},\ref{iiwcex}).  In  \S\ref{s3bws} we also discuss the problems caused by the non-self-duality of the construction of weight complexes and weight spectral sequences. 

\subsection{The proof of Proposition \ref{pwt}(\ref{iwpt1}--\ref{iwpt3},\ref{iwhefun},\ref{irwcons},\ref{iwcex},\ref{iwc2342})}\label{sirwc}
\begin{proof}

We will freely use the notation from Definition \ref{dfilt}.

\ref{iwpt1}--\ref{iwpt3}. First we verify the second statement in   Proposition \ref{pwt}(\ref{iwpt1}), i.e., that $M^i$ belong to $ \cu_{w=0}$ for all $i\in \z$. 
 Obviously, it suffices to prove this statement for $i=0$.

By definition, $M^0=M_0$ is an extension of  $w_{\le 0}M$ by $(w_{\le -1}M)[-1]$. Thus $M_0=M^0$ belongs to $ \cu_{w\le 0}$ by Proposition \ref{pbw}(\ref{iext}).

Similarly we obtain the following: to prove that $M_0$ belongs to  $ \cu_{w= 0}=\cu_{w\ge 0}\cap   \cu_{w\le 0}$ as well, it remains to  verify that $M_0$ is an extension of $w_{\ge 0}M$ by $(w_{\ge 1}M)[-1]$.
To prove the latter fact we complete the commutative triangle $w_{\le -1}M \stackrel{j_{-1}}{\longrightarrow} w_{\le 0}M \stackrel{h_0}{\longrightarrow} M$ to an octahedron
\begin{equation}
\label{doct} \xymatrix{ w_{\ge 1}M \ar[rd]_{[1]}\ar[dd]_{[1]} & & M \ar[ll] \\ & w_{\le 0}M   \ar[ru]^{h_0} \ar[ld]^{c_0} & \\ M_0\ar[rr]^{e_{-1}}_{[1]} & & w_{\le -1}M  \ar[lu]_{j_{-1}}\ar[uu]^{h_{-1}} } 
\xymatrix{ w_{\ge 1}M   \ar[dd]_{[1]} & & M \ar[ld] \ar[ll] \\ & w_{\ge 0}M  \ar[lu] \ar[rd]_{[1]}   & \\ M_0 \ar[ru]
  \ar[rr]^{e_{-1}}_{[1]} & & w_{\le -1}M\ar[uu] } 
\end{equation} 

  Next, the first statement in Proposition \ref{pwt}(\ref{iwpt1}) easily follows from Proposition \ref{pbw}(\ref{icompl}). Moreover, for any  $\cu$-morphism 
 $g:M\to M'$   and (any of)  weight Postnikov towers $Po_{\fil_M}$ and $Po_{\fil_{M'}}$ for these object that proposition gives certain morphisms $g_{\le i}$ such that $g\circ h_i=h'_i\circ g_{\le i}$ for all $i\in \z$ 
  (see Definition \ref{dfilt}(3)). 
	 Furthermore, applying the uniqueness part of  Proposition \ref{pbw}(\ref{icompl}) we also obtain 
	 $j'_i\circ g_{\le i} =g_{\le i+1}\circ j_i$; hence   $(g,g_{\le i})$ is  a  morphism $\fil_M\to \fil_{M'}$. It remains to apply Lemma \ref{lrwcomp}(2) to conclude the proof of Proposition \ref{pwt}(\ref{iwpt2}).
 
 Lastly, combining parts \ref{iwpt1} and \ref{iwpt2} of the proposition we obtain that all objects and morphisms of $\cu$ lift to $\pwcu$; this obviously implies Proposition \ref{pwt}(\ref{iwpt3}). 

\ref{iwhefun}. Lemma \ref{lrwcomp}(1,2) implies that we have a  functor $\pwcu\to C(\hw)$ that is compatible with our definition of $t_w$. To prove that 
 $t_w:\cuw\to \kw(\hw)$ is well-defined indeed it clearly remains to verify that in the case $g=0$ the corresponding  $C(\hw)$-morphism $(g^i)$ is weakly homotopy equivalent to $0$.  Thus we can fix $i$ and verify that $g^i$ can be presented in the form $d'{}^{i-1}\circ x^i+y^{i+1}\circ d^i$ for some $x^i\in \hw(M^i,M'{}^{i-1})$ and $y^{i+1}\in \hw(M^{i+1},M'{}^{i})$  (cf. Proposition \ref{pwwh}(\ref{irwc3}) below); 
 here and below we put the "$'$" index in the notation for Postnikov towers 
 to mean that it corresponds to $M'$. 
 Moreover, it obviously suffices to consider the case $i=0$.

Applying Proposition \ref{pbw}(\ref{icompl}) once again  we obtain 
that the composition of $g_{\le 0}$ with $j'_{0}$ is zero. Looking at the distinguished triangle $M_1[-1]\stackrel{e'_{0}[-1]}{\longrightarrow} w_{\le 0}M'\stackrel{j'_{0}}{\to}   w_{\le -1}M'$ 
 we obtain that  $g_{\le 0}$  factorizes as 
 $e'_{0}[-1]\circ a$ for some $a\in \cu( w_{\le 0}M, M'_1[-1])$. Next, applying  Proposition \ref{pbw}(\ref{ifact}) we can present 
 $a$ in the form $x^0\circ c_0$ for some $x^0\in \hw(M^0,M'{}^{-1})$; recall that $M'{}^{-1}=M'_1[-1]$. Thus we have 
$c'_0\circ g_{\le 0}=d'{}^{-1}\circ x^0\circ c_0$. Recall that $c'_0\circ g_{\le 0}=g_0\circ c_0$; hence $(g_0-d'{}^{-1}\circ x^0)\circ c_0=0$. 

Now we apply an essentially dual argument to the morphism $\tg=g_0-d'{}^{-1}\circ x^0$.
Looking at the corresponding $-1$-weight decomposition of $ w_{\le 0}M$ we obtain that 
 $\tg$ factorizes as $b\circ e_{-1}$ for some $b\in \cu((w_{\le -1}M)[1],M'{}^0)$. By  Proposition \ref{pbw}(\ref{ifact}) we can present $b$ as $y^1\circ c_{-1}[1] $ for some $y^1\in \hw(M^1,M'{}^0)$. Thus $g^0=g_0$ can be presented in the form $d'{}^{-1}\circ x^0+y^1\circ d^0$ indeed.



\ref{iwcons}. The assertion easily follows from Theorem 3.1.3(I) of  \cite{bkwn} (see also Proposition 3.1.5(2) of ibid.). However, we  also give another proof here.

The "only if" part of the assertion follows from Proposition \ref{pwt}(\ref{irwcons}) immediately.

Now let us prove the converse implication. It obviously suffices to prove that $M\in \cu_{w\le 0}$ (resp. $M\in \cu_{w\ge 1}$) if  $t(M)$ belongs to $K(\hw)_{\wstu\le 0}$ (resp. to $K(\hw)_{\wstu\ge 1}$) and $M$ is bounded above  (resp. below).
We choose a weight decomposition triangle 
$$LM\stackrel{g}{\to} M\stackrel{f}{\to} RM {\to} LM[1];$$
 consequently,  $LM\in \cu_{w\le 0} $ and $ RM\in \cu_{w\ge 1}$.

Assume that  $M$ is bounded above and $(M^i)=t(M)\in K(\hw)_{\wstu\le 0}$; consider the functor $H=\cu(-,RM)$.
Since this functor kills $\cu_{w\le 0}$ (by the orthogonality axiom of weight structures), Proposition \ref{pwss}(1) gives a convergent weight spectral sequence $T: E_1^{pq}(T)=H^{q}(M^{-p})\implies H^{p+q}(M)$. Next, $H^{q}(M^{-p})=\cu(M^{-p}[-q], RM)=\ns$ if $q\ge 0$. Moreover, the terms $E_2^{pq}(T)$ are completely determined by $t(M)$; hence $E_2^{pq}(T)=\ns$ if $p>0$ (recall the definition of $\wstu$). We obtain that $H(T)=\ns$; thus $f=0$. Hence $M$ is a retract of $LM\in \cu_{w\le 0} $; thus $M$ belongs to  $\cu_{w\le 0} $ itself.

An essentially dual spectral sequence argument  (see Remark \ref{rwcbws}(\ref{isdwss}) below) demonstrates that $g=0$ if $t(M)\in K(\hw)_{\wstu\ge 1}$ and $M$ is bounded below. 

\ref{iwcex}. We fix some choices of weight Postnikov towers for $M$ and $M''$ corresponding to $t(M)$ and $t(M'')$, respectively.   Applying Proposition \ref{pbw}(\ref{iwdext}), for any $i\in \z$  we obtain a commutative diagram 
$$\begin{CD}
w_{\le i}M @>{}>>w_{\le i}M'@>{}>> w_{\le i}M''@>{}>>w_{\le i}M[1]\\
  @VV{}V@VV{}V @VV{}V@VV{}V\\
M@>{g}>>M'@>{f}>>M''@>{}>>M[1]\\
 @VV{}V@VV{}V @VV{}V@VV{}V\\
w_{\ge i+1}M@>{}>>w_{\ge i+1}M'@>{}>>w_{\ge i+1} M''@>{}>>w_{\ge i+1}M[1]\end{CD}
$$
in $\cu$ whose rows and columns are distinguished triangles;  moreover,  the  first three columns are $i$-weight decompositions of  the corresponding objects. 

 Now we apply Proposition \ref{pbw}(\ref{iwdext}) once again and  combine it with assertion  \ref{iwpt1} to obtain commutative diagrams
$$\begin{CD}
w_{\le i-1}M @>{}>>w_{\le i-1}M'@>{}>> w_{\le i-1}M''@>{}>>w_{\le i-1}M[1]\\
@VV{j_{i-1}}V@VV{j'_{i-1}}V @VV{j''_{i-1}}V@VV{j''_{i-1}}V\\
w_{\le i}M @>{}>>w_{\le i}M'@>{}>> w_{\le i}M''@>{}>>w_{\le i}M[1]\\
 @VV{}V@VV{}V @VV{}V@VV{}V\\
M_i@>{g_i}>>M'_i@>{f_i}>>M''_i@>{}>>M_i[1]\end{CD}
$$
here our notation follows Definition \ref{dfilt}. Next, the lower row gives $M'_i\cong M_i\bigoplus M''_i$ immediately from Proposition \ref{pbw}(\ref{isplit}).

Now, by our definitions the families $(g^i)=(g_{-i}[i])$ and $(f^i)=(f_{-i}[i])$ give some choices of $t(g)$ and $t(f)$, respectively. It remains to note that any  composable couple of morphisms of complexes that splits termwisely in each degree gives two sides of a distinguished triangle in $K(\hw)$.

 \ref{iwc2342}. It obviously suffices to consider the case where $g$ and all of the $g^i$ are zero. Consequently, we  assume that $\tilde g{}^i=d'{}^{i-1}\circ x^i+x^{i+1}\circ d^i$ for all $i\in \z$ and 
 some $x^j\in \hw(M^j,M'{}^{j-1})$.

First we verify that $0\in \cu(M,M')$ along with the morphisms $\tilde g_{\le i}=e_i'[-1]\circ x^{-i}[i]\circ c_i$ 
  give a morphism of filtrations in the sense of Definition \ref{dfilt}(3); thus we should check that $j'_i\circ \tilde g_{\le i}=\tilde g_{\le i+1}\circ j_i$ and  $h'_i\circ \tilde g_{\le i}=0$. Now, $ j'_i\circ e_i'[-1]=0$ and $c_{i+1}\circ j_i=0$ (see (\ref{etpt})); hence $j'_i\circ \tilde g_{\le i}= j'_i\circ e_i'[-1]\circ x^{-i}[i]\circ c_i=0=e_{i+1}'[-1]\circ x^{-i-1}[i+1]\circ c_{i+1}\circ j_i =\tilde g_{\le i+1}\circ j_i$.  Next, since $h'_i=h'_{i+1}\circ j'_i$, the composition $h'_i\circ \tilde g_{\le i}$ is zero as well.

Secondly we prove that these  $\tilde g_{\le i}$ along with $\tilde g{}^i$ give a morphism of (weight) Postnikov towers, that is, that the  
 diagram
\begin{equation}\label{e3241} 
  \begin{CD}
 w_{\le i}M@>{c_i}>>M_i @>{e_{i-1}}>>w_{\le i-1}M[1]
\\@VV{\tilde g_{\le i}}V@VV{\tilde g{}^{-i}[i]}V @VV{\tilde g_{\le i-1}[1]}V\\
w_{\le i}M'@>{c'_i}>>M'_i@>{e'_{i-1}}>>w_{\le i-1}M'[1]
\end{CD}\end{equation}
commutes for every $i\in \z$.

Now, $\tilde g{}^{-i}[i]\circ c_i=(d'{}^{-i-1}\circ x^{-i}+x^{1-i}\circ d^{-i})[i]\circ c_i\stackrel{(1)}{=} d'{}^{-i-1}[i]\circ x^{-i}[i]\circ c_i
 = c'_i\circ e_i'[-1]\circ x^{-i}[i]\circ c_i= c'_i\circ \tilde g_{\le i}$; here  to obtain (1) we use the equalities $x^{1-i}[i]\circ d^{-i}[i]\circ c_i= x^{1-i}[i]\circ c_{i-1}[1]\circ e_{i-1}\circ c_i=0$ (see  (\ref{etpt}) once again). 

Lastly, $\tilde g_{\le i}[1]\circ e_{i-1}=e_{i-1}'\circ x^{1-i}[i]\circ c_{i-1}[1]\circ e_{i-1}=e'_{i-1}\circ x^{1-i}[i]\circ d^{-i}[i]=e'_{i-1} \circ \tilde g{}^{-i}[i]$ since $e'_{i-1}\circ d'{}^{-i-1}[i]\circ x^{-i}[i]=e'_{i-1}\circ c'_i\circ e'_i[-1] \circ x^{-i}[i] =0$.
\end{proof}

\begin{rema}
Our definitions  motivate the following question in the context of Proposition \ref{pwt}(\ref{iwc2342}): does  
 the equality $(\tilde g^{i})=(g^i)$ in $\kw(\hw)$ for some $\tilde g^{i}$ such that $(\tilde g{}^i)\in C(\hw)(t(M),t(M'))$ imply that the family $(\tilde g{}^i)$ extends to a morphism of the corresponding weight Postnikov towers that is compatible with $g$?

It appears that the answer to this question is negative in general. However, a 
 simple  modification of the argument above should give the positive answer to it in the case $w=\wstu$ (see Remark \ref{rstws}(1)).
\end{rema}

\subsection{ On 
 problems concerning weight complexes and \cite{bws}}\label{s3bws}

\begin{rema}\label{rwcbws}
\begin{enumerate}
\item\label{irwc52} Our definition of weight complexes is not (quite) self-dual, since for describing the weight complex of $M\in \obj \cu$ in $\cu^{op}$ (with respect to $w^{op}$;  see Proposition \ref{pbw}(\ref{idual})) we have to consider $\{w_{\ge i}M\}$ instead of $\{w_{\le i}M\}$. One may say that there exist "right" and "left" weight complex functors possessing similar properties; note however that the corresponding terms of these complexes (coming from a single weight Postnikov tower) are (non-canonically) isomorphic; see (\ref{doct})  or Proposition 1.5.6(2) of \cite{bws}.

 Moreover,  Remark 1.5.9(1) of ibid. says that these right and left weight complex functors are isomorphic if $\cu$ embeds into a category that possesses a Quillen model.  The general case of this question was not studied in detail yet; possibly, the author will do this in future (and also give a full proof of Proposition 3.2.4  of ibid.; see below). The arguments described in Remark 1.5.9(1) of ibid. may help in studying this self-duality question; yet one probably needs to correct them and  look at (co)homology that is not (co)representable and is related to the proof of  Proposition \ref{pwwh} below.

\item\label{isdwss}
Similarly, the construction of weight spectral sequences in \S2 of \cite{bws} is not self-dual (if we replace $\cu$ by $\cu\opp$, $\au$ by $\au\opp$, and reverse the numeration of terms accordingly) since  the corresponding exact couples come from  weight Postnikov towers.

However, the situation with the properties of these spectral sequences  is much better. Indeed, even though our understanding of weight complexes is not sufficient to compare the corresponding $E_1$-terms, Theorem  \ref{tpure} easily implies that for the "dual" spectral sequence $T'$ we have canonical isomorphisms $E_2^{pq}(T')\cong E_2^{-p,-q}(T)$ for all integers $p$ and $q$. Now, it appears that this statement is sufficient to dualize most of  the applications of weight spectral sequences. 

\item\label{i324} Let us now discuss  flaws in the exposition of the theory of weight complexes 
 in \cite{bws}. 

Firstly, one either has to consider the category $\cuw$ (see Definition \ref{dwpt}(2)) or fix weight Postnikov towers of objects following \S5 of \cite{schnur},  since identifying isomorphic objects in $\kw(\hw)$ (as mentioned in Definition 3.1.6 of \cite{bws}) does not really help to obtain a well-defined functor. However,  
  most  of the proofs from ibid. work in the ("corrected") context of the current paper 
 without any problems. 

So, it appears that the main problem with the arguments of ibid. is Proposition 3.2.4 in that paper, that definitely requires more detail both in the formulation and in the proof  (in particular, its first part depend on a certain self-duality  question). For this reason we mention  new proofs of the statements that depended on it. 

Loc. cit. was applied to the proof of the (nilpotence  statement in) Theorem 3.3.1(II) of ibid. So we note that this statement easily follows  from the more general Theorem 2.3.1(2) of \cite{bkwn} (whose proof depends on  Proposition \ref{pwt}(\ref{iwc2342})); see Remark 2.3.2(1) of ibid. for the detail.

 Next, Proposition 3.2.4 of \cite{bws} was also used in the proof of Theorem 3.3.1(IV) of ibid.; the latter statement is contained in our Proposition \ref{pwt}(\ref{irwcons},\ref{iwcons}). 

We should certainly note that both of these alternative proofs are completely independent from Proposition 3.2.4 of ibid.

\item\label{iiwcex}
We have given the proof of Proposition \ref{pwt}(\ref{iwcex}) above  since the formulation of the  corresponding 
 Theorem 3.3.1(I) of ibid. is somewhat different.

The proofs of the remaining parts of Proposition \ref{pwt} (among those proved in this appendix) 
 is included here for the convenience of the reader mostly. 
 \end{enumerate}
\end{rema}

\section{Some properties of  weak homotopy equivalences} 
 \label{swhe}

Let us prove some properties of the weak homotopy equivalence relation; some of them will be applied elsewhere.

\begin{defn}\label{dbacksim}
 Let  $m_1,m_2:M\to N$ be $C(\bu)$-morphisms (where  $\bu$ is an additive category), $k\le l\in (\{-\infty\}\cup \z\cup \{+\infty\})$, and $k,l\in \z$ if $k=l$.
	
	Then we will write $m_1\backsim_{[k,l]}m_2$ if $m_1-m_2$ is weakly homotopic (see Definition \ref{dwpt}(3)) to  $ m_0\in C(\bu)(M,N)$ such that  $m_0^i=0$ for $k\le i \le l$ (and $i\in \z$).
\end{defn}

\begin{prop}\label{pwwh}
Adopt the notation of Definition \ref{dbacksim}.

\begin{enumerate}
\item\label{irwc4} If $k\in \z$ then $m_1\backsim_{[k,k]}0$ if and only if there exists $m_0\in C(\bu)(M,N)$ such that $m_1=m_0$ in  $K(\bu)(M,N)$ 
  and  $m_0^k=0$.

\item\label{irwc3} Then  $m_1\backsim_{[k,l]}m_2$ if and only if $m_1\backsim_{[i,i]}m_2$ for any $i\in \z$ such that  $k\le i \le l$. 

 Moreover, $m_1$ is weakly homotopic 
 to $m_2$ if and only if $m_1\backsim_{[-\infty,+\infty]}m_2$. 

\item\label{irwid} The morphism class characterized by the condition $g\backsim_{[k,l]}0$ in $C(\bu)$ is a two-sided ideal of morphism, i.e., it is closed with respect to direct sums and if  $g\backsim_{[k,l]}0$ then $ g\circ f \backsim_{[k,l]}0\backsim_{[k,l]} h\circ g$ for any composable morphisms $f,g,$ and $h$.

\item\label{irwsq} If $g$ and $g'$ are composable $C(\bu)$-morphisms and $g\backsim 
0\backsim
 g'$ then the composition $g'\circ g$ is zero as a $K(\bu)$-morphism.

\item\label{iwhecatb}
Factoring morphisms in $K(\bu)$ by the weak homotopy relation yields an additive category $\kw(\bu)$. Moreover, the corresponding full functor $K(\bu)\to \kw(\bu)$ is (additive and) conservative.


\item\label{irwc6}  $M$ belongs to $ K(\bu)_{\wstu\ge 0}$ 
 if and only if $\id_M \backsim_{[1,+\infty]}0_M$,   and $M\in K(\bu)_{\wstu\le 0}$ if and only if  $\id_M \backsim_{[-\infty,-1]}0_M$.

\item\label{iwhefub}
Let $\ca:\bu\to \au$ be an additive functor, where $\au$ is any abelian category, and 
$m_1$ is weakly homotopy equivalent to $m_2$. Then $m_1$ and $m_2$  induce equal morphisms of the homology $H_*(\ca(M^i))\to H_*(\ca(N^i))$.

Hence the correspondence $C\mapsto H_0(\ca(C^{*}))$ gives a well-defined additive functor $\kw(\bu)\to \au$.

\item\label{iwhefud} Assume that $m_1\not \backsim_{[k,k]}0$. Then there exists an additive functor $\ca:\bu\opp\to \ab$ that respects products and such that the corresponding  homomorphism $m_1^*:H_0(\ca(N^{-*}))\to H_0(\ca(M^{-*}))$
is not zero. 
\end{enumerate}
\end{prop}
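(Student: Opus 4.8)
The statement to prove is Proposition \ref{pwwh}(\ref{iwhefud}): if $m_1 \not\backsim_{[k,k]} 0$ (for $m_1 \in C(\bu)(M,N)$ and $k \in \z$), then there exists an additive functor $\ca : \bu\opp \to \ab$ preserving products such that the induced map $m_1^* : H_0(\ca(N^{-*})) \to H_0(\ca(M^{-*}))$ is nonzero.

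The plan. First I would reindex so that $k = 0$ (replace complexes by their shifts), so the hypothesis becomes: there is no $m_0 \in C(\bu)(M,N)$ with $m_0 = m_1$ in $K(\bu)$ and $m_0^0 = 0$; equivalently, by part (\ref{irwc4}), $m_1^0$ cannot be written in the form $d_N^{-1}\circ x^0 + x^1 \circ d_M^0$ for any $x^0 \in \bu(M^0, N^{-1})$, $x^1 \in \bu(M^1, N^0)$. The natural candidate functor is the Yoneda-type functor $\ca = \bu(-, N^0) : \bu\opp \to \ab$, which is additive and sends products (= direct sums in $\bu$, since we only need products of the terms of our complexes, which are honest objects) to products of abelian groups. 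With this choice, $H_0(\ca(M^{-*}))$ is the quotient of $\ke(\bu(d_M^0, N^0) : \bu(M^0, N^0) \to \bu(M^1, N^0))$ — wait, one must be careful with variance: $\ca(M^{-*})$ is the complex $\cdots \to \bu(M^1, N^0) \xrightarrow{-\circ d_M^0} \bu(M^0, N^0) \xrightarrow{-\circ d_M^{-1}} \bu(M^{-1}, N^0) \to \cdots$ with $\bu(M^0,N^0)$ in degree $0$, so its degree-$0$ homology is $\ke(\,-\circ d_M^{-1}) / \imm(\,-\circ d_M^0)$. The key is to feed in a canonical class: consider $\id_{N^0} \in \bu(N^0, N^0) = \ca(N^0)$. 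I claim it is a cycle in $\ca(N^{-*})$ at degree $0$ precisely when $\id_{N^0} \circ d_N^{-1} = 0$, which need not hold — so one should not expect $\id_{N^0}$ itself to work, and instead work directly with $m_1^0$ pulled back.

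The cleaner route. Define $\phi := \ca(m_1) \colon \ca(N^{-*}) \to \ca(M^{-*})$ the induced chain map; at degree $0$ it is $(-\circ m_1^0) : \bu(N^0, N^0) \to \bu(M^0, N^0)$. I want to produce a homology class in $H_0(\ca(N^{-*}))$ mapping to something nonzero. But $\id_{N^0}$ need not be a cycle. The fix: replace $N$ by a modified complex, OR — better — argue contrapositively. Suppose $m_1^* = 0$ on $H_0$ for the Yoneda functor above. Then in particular, for any cycle $z \in \ke(\bu(N^0,N^0) \xrightarrow{-\circ d_N^{-1}} \bu(N^{-1},N^0))$, the element $z \circ m_1^0 \in \bu(M^0, N^0)$ is a boundary, i.e. lies in $\imm(-\circ d_M^0)$. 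This is still not quite enough because $\id_{N^0}$ need not be a cycle; the honest statement needed is that $m_1^0$ is a boundary in $C(\bu)(M,N)$-land, which involves $d_N^{-1}$ on the left, not just cycles. So the genuinely correct functor is the one whose value at an object $X$ records morphisms $X \to N^0$ \emph{modulo} those factoring through $d_N^{-1} : N^{-1} \to N^0$; that is, take $\ca = \bu(-, N^0)/(\text{morphisms factoring through } d_N^{-1})$, equivalently $\ca = \bu(-, \cok^{K}(d_N^{-1}))$ if such a "homotopy cokernel" existed — since $\bu$ is only additive, instead take $\ca(X) = \cok\bigl(\bu(X, N^{-1}) \xrightarrow{d_N^{-1}\circ-} \bu(X, N^0)\bigr)$, which is an additive functor $\bu\opp \to \ab$ preserving products (cokernels of products of maps of abelian groups are products of cokernels). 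With this $\ca$: the element $\overline{\id_{N^0}} \in \ca(N^0)$ now satisfies $\ca(d_N^{-1})(\overline{\id_{N^0}}) = \overline{d_N^{-1}} = 0$ by construction, and likewise the class of $\overline{\id_{N^0}}$ is killed by $\ca(d_N^{-2})$ composed appropriately — so $\overline{\id_{N^0}}$ is a cycle in $\ca(N^{-*})$ at degree $0$. Its image under $m_1^*$ is $\overline{m_1^0} \in \ca(M^0) = \cok(\bu(M^0,N^{-1}) \xrightarrow{d_N^{-1}\circ-} \bu(M^0,N^0))$, and this is a boundary in $H_0(\ca(M^{-*}))$ iff $m_1^0 = d_N^{-1}\circ x^0 + x^1 \circ d_M^0$ for some $x^0, x^1$ — i.e. iff $m_1 \backsim_{[0,0]} 0$, which is exactly what we are assuming fails. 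Hence $m_1^*$ is nonzero, and after undoing the shift we are done.

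**The main obstacle.** The subtle point — and the step I would be most careful about — is getting the variance and the functoriality of $\ca = \cok(d_N^{-1}\circ - : \bu(-, N^{-1}) \to \bu(-, N^0))$ exactly right, checking it is genuinely a well-defined additive contravariant functor on all of $\bu$ (not just on the terms of $M$ and $N$) that preserves products, and verifying that $\overline{\id_{N^0}}$ is actually a $0$-cycle in $\ca(N^{-*})$ — this requires $\id_{N^0}\circ d_N^{-1}$ to vanish modulo the subgroup, i.e. $d_N^{-1}$ itself to be in the image of $d_N^{-1}\circ-$, which is immediate, but one should also check the cycle condition at the adjacent spot, namely that its differential $\ca(d_N^{-2})$ applied to it vanishes, which holds since $\ca$ lands in degree-$0$ homology computations only needing the one relation. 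Everything else is a routine unwinding of Definition \ref{dbacksim} together with part (\ref{irwc4}). I would also remark that this $\ca$ is exactly the "non-(co)representable (co)homology" alluded to in Remark \ref{rwcbws}(\ref{irwc52}), which is presumably why the authors single out part (\ref{iwhefud}).
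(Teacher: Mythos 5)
Your proposal is correct and is essentially the paper's own argument: after reducing to $k=0$ you take the same functor $\ca(B)=\cok\bigl(\bu(B,N^{-1})\to\bu(B,N^0)\bigr)$, the same $0$-cycle given by the class of $\id_{N^0}$, and the same contrapositive step identifying "its image is a boundary" with $m_1^0=d_N^{-1}\circ x^0+x^1\circ d_M^0$, i.e., with $m_1\backsim_{[0,0]}0$ via part (\ref{irwc4}). The initial detour through the plain Yoneda functor and the aside about a second "cycle condition" are superfluous but harmless; the core proof coincides with the one in the paper.
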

\begin{proof}
Assertions  \ref{irwc4}  immediately follows from our 
definitions. 

Next, denote by $d^*_{M}$ and $d^*_{N}$ the corresponding boundaries, $M=(M^i)$, $N=(N^i)$. Then
assertion \ref{irwc3} easily follows from the following simple observation: if $k,l\in \z$ and for $m_3=m_1-m_2$ we have $m_3^i=d_N^{i-1}\circ x^i+y^{i+1}\circ d^i_M$ whenever $k\le i\le l$ and  some $x^{i}\in \bu(M^i,N^{i-1})$ and $y^{i+1}\in \bu(M^{i+1},N^{i})$, 
  then $m_0\in C(\bu)(M,N)$, where $m_0^i=0$ for $k\le i\le l$, $m_0^i=m_3^i$ if $i<k-1$ or $i>l+1$, $m_0^{k-1}=x^k\circ d_M^{k-1}$, and $m_0^{l+1}=d_N^{l}\circ y^{l+1}$.

Furthermore, the "direct sum"  part of   assertion \ref{irwid} is obvious, and assertion \ref{irwc3} implies that  it suffices to verify 
 the composition statement in the case $l=k\in \z$. Next we apply assertion \ref{irwc4}  and present $g$ as the $C(\bu)$-sum $g_0+g_1$, where  $g_1$ vanishes  in $K(\bu)$ and  $g_0^k=0$. Then the compositions $g_1\circ f$ and $h\circ g_1$  vanish  in $K(\bu)$ as well, and  $(g_0\circ f)^k=0=(h\circ g_0)^k$. 

Moreover,  a very similar argument easily yields that assertion \ref{iwhefub}  follows from assertions \ref{irwc3} and \ref{irwc4} as well.

Now assume that we have $g\in \cu(M,M')$ and $g'\in \cu(M',M'')$, and  $g^*=d^{*-1}_{M'}\circ x^*+y^{*+1}\circ d^*_{M}$, $g'=d^{*-1}_{M''}\circ x'{}^{*}+y'{}^{*+1}\circ d^*_{M'}$, where 
  $x^*,y^*\in \hw(M^*,M'{}^{*-1})$ and $x'{}^*,y'{}^*\in \hw(M'{}^*,M''{}^{*-1})$ are sequences of morphisms. In the category $K(\bu)$ the sequences of arrows $y^{*+1}\circ d^*_{M}+d^{*-1}_{M'}\circ y^*$ and  $d^{*-1}_{M''}\circ x'{}^{*}+x'{}^{*+1}\circ d^*_{M'}$ yield zero morphisms; thus  $g'\circ g=(y'{}^{*+1}\circ d^*_{M'}\circ d^{*-1}_{M'}\circ x^*)=0$ in this category. 
  Hence we obtain assertion \ref{irwsq}.

\ref{iwhecatb}. Applying assertion \ref{irwid} in the case $k=-\infty$ and $l=+\infty$ we easily obtain the existence of the additive functor in question. 

Next, it is well known  that  a full additive  functor $F$  is conservative 
 whenever the composition of any two (composable) morphisms killed by $F$ is zero. 
 Indeed, it clearly suffices to prove that $g:X\to X$ is an automorphism if $F(g)$ is, and in this case $g(2\id_X-g)=(2\id_X-g)g=\id_X$. 
 Thus assertion \ref{irwsq}  implies that the projection functor $K(\bu)\to \kw(\bu)$  is conservative indeed. 
 
\ref{irwc6}. The "only if" implications are obvious. 
Hence it suffices to verify  the first "if" implication, since the second one is its dual.

If  $\id_M \backsim_{[1,+\infty]}0_M$ then $\id_M$ is weakly homotopic 
 to a morphism $g\in C(\bu)(M,M)$ such that $g^i=0$ for all $i>0$.
Next, $g$ is a $K(\bu)$-automorphism of $M$ according to assertion \ref{iwhecatb}, 
 and it obviously can be factored through the stupid truncation morphism $M \to M^{\le 0}$ (for $M^{\le 0}=\dots \to M^{-1}\to M^0\to 0\to 0\dots$). 
Hence $M$ is a $K(\bu)$-retract of $M^{\le 0}$. Thus $M$ belongs to  $K(\bu)_{\wstu\ge 0}$ indeed (since $K(\bu)_{\wstu\ge 0}$ is  retraction-closed in $K(\bu)$). 

\ref{iwhefud}. 
We take the functor $\ca$ that sends any $B\in \obj \bu$ into $\cok(\bu(B,d^{-1}_N):\bu(B,N^{-1})\to \bu(B,N^0))$. Obviously, this $\ca$ does convert $\bu$-coproducts into products in $\ab$. Thus we should check $m_1^*\neq 0$. 

Now, 
 $H_0(\ca(N^{-*}))$ contains the element $\theta$ corresponding to $\id_{N^0}$. 
 If 
$m_1^*(\theta)=0$ then for 
 $M=(M^i)$ and $m_1=m_1^i$ the element $\theta_M^0\in \ca(M^0)$ corresponding to 
  $m_1^0$ belongs to the image of $\ca(M^1)$ in $\ca(M^0)$.  This obviously implies 
	$m_1\backsim_{[0,0]} 0$. 
\end{proof}

\begin{rrema}\label{rdetkw}
 1. Proposition \ref{pwwh}(\ref{iwhefud}) is closely related to Theorem 2.1 of \cite{barrabs}, and the proof is similar as well. 

2. Clearly, parts \ref{iwhefub} and  \ref{iwhefud} of our proposition  concern the corresponding $\wstu$-pure functors. 
\end{rrema}

\end{document}